\newtheorem*{theorem*}{Theorem}
\newtheorem{theorem}{Theorem}[section]
\newtheorem{lemma}[theorem]{Lemma}
\newtheorem{corollary}[theorem]{Corollary}
\newtheorem{remark}[theorem]{Remark}
\newtheorem{proposition}[theorem]{Proposition}
\newcommand{\PSH}{\mathrm{PSH}}
\DeclarePairedDelimiter\floor{\lfloor}{\rfloor}
\DeclareMathOperator{\vol}{vol}
\DeclareMathOperator{\usc}{usc}
\newcommand{\ddc}{\mathrm{dd}^{\mathrm{c}}}
\newcommand{\Nm}{\mathrm{Norm}}
\newcommand{\BM}{\mathrm{BM}}
\def\beq{\begin{equation}}
\def\eeq{\end{equation}}
\title{ The volume of pseudoeffective line bundles and partial equilibrium}
\author{Tam\'as Darvas (University of Maryland) \\ Mingchen Xia (Sorbonne Universit\'e)}
\date{}
\begin{document}
\maketitle

\begin{abstract}
Let $(L,he^{-u})$ be a pseudoeffective line bundle on an $n$-dimensional compact Kähler manifold $X$. Let $h^0(X,L^k\otimes \mathcal I(ku))$ be the dimension of the space of sections $s$ of $L^k$ such that $h^k(s,s)e^{-ku}$  is integrable. We show that the limit of $k^{-n}h^0(X,L^k\otimes \mathcal I(ku))$ exists, and equals the non-pluripolar volume of $P[u]_\mathcal I$, the $\mathcal I$-model potential associated to $u$. We give applications of this result to Kähler quantization: fixing a Bernstein--Markov measure $\nu$, we show that the partial Bergman measures of $u$ converge weakly to the non-pluripolar Monge--Ampère measure of $P[u]_\mathcal I$, the partial equilibrium.   
\end{abstract}

\section{Introduction}

A major theme in Kähler geometry has been  the approximation/quantization
of natural objects in the theory, going back to a problem of Yau \cite{Ya86} and early work of Tian  \cite{Ti88}.   
 Initial focus was on the quantization of smooth Kähler metrics, with asymptotic expansion results due to Tian, Bouche, Catlin, Zelditch, Lu and others \cite{Ti90,Bo90,Ca97,Ze98,Lu00}. Later, Donaldson proposed to not just quantize Kähler metrics, but their infinite dimensional geometry as well \cite{Don01}. This led to a flurry of activity helping to better understand notions of stability in K\"ahler geometry (see \cite{Bern18}, \cite{CS12}, \cite{PS06}, \cite{SZ10}, \cite{DLR20}, \cite{Zh21} to only mention a few works in a fast expanding literature). We refer to the excellent textbook \cite{MM07} for a detailed discussion and history of many classical results in this direction.

Our work fits into this broad context, however we consider perhaps the most singular objects one can work with: positively-curved metrics on a pseudoeffective line bundle. Despite the fact that potentials of these positively-curved metrics are only integrable in general, we will be able to recover their  volumes and partial equilibrium measure using quantization, significantly extending the scope of previous results in the literature. 

\paragraph{The volume of a pseudoeffective line bundle.} We now describe our results. Let $L$ be a holomorphic line bundle on a compact connected Kähler manifold $(X,\omega)$ of dimension $n$. Let $h$ be a smooth metric on $L$, and let $\theta:=c_1(L,h)$ denote the Chern form of $h$. Let $(T,h_T)$  be an arbitrary Hermitian holomorphic vector bundle on $X$ of rank $r$, that will be used to \emph{twist} powers of $L$. 

By $\PSH(X,\theta)$ we will denote the space of quasi-plurisubharmonic (quasi-psh) functions $v$ on $X$ such that $\theta + \ddc v = \theta + \frac{\mathrm{i}}{2\pi} \partial \bar \partial v \geq 0$ in the sense of currents. Here $\textup{d} = \partial + \bar \partial$ and $ \textup{d}^c = \frac{\mathrm{i}}{4\pi}(-\partial + \bar \partial)$.

A priori $\PSH(X,\theta)$ may be empty, but if there exists $u \in \PSH(X,\theta)$, then following terminology of Demailly, we say that the pair $(L,h e^{-u})$ is a pseudoeffective (psef) Hermitian line bundle. Moreover, to such $u$ one can associate a non-pluripolar complex Monge--Ampère measure $\theta_u^n$, as introduced in \cite{BEGZ10},\cite{GZ07}, following ideas by Cegrell \cite{Ce98} and Bedford--Taylor \cite{BT76} in the local case (see Section~\ref{subsec:singtype} for more details). 

We can associate to $u$ the so called \emph{$\mathcal I$-model potential/envelope} $P[u]_\mathcal I \in \PSH(X,\theta)$:
\begin{equation}\label{eq: PIdef}
P[u]_\mathcal I := \sup \left\{w \in \PSH(X,\theta), \ w \leq 0, \ \mathcal I(tw) \subseteq \mathcal I (tu), \ t \geq 0\right\}\,.
\end{equation}
Here $\mathcal I(tu)$ is a multiplier ideal sheaf, locally generated by holomorphic functions $f$ such that $|f|^2e^{-tu}$ is integrable.  To our knowledge $P[u]_\mathcal I$ was first considered in \cite{KS20}, and we studied it in detail in \cite[Section~2.4]{DX22} and also \cite{Tr20}.

Let $H^0(X,T \otimes L^k \otimes \mathcal I(ku))$ be the space of global holomorphic sections $s$ of $T \otimes L^k$ satisfying $\int_X h_T \otimes h^k(s,s)e^{-ku} \omega^n <\infty$. We also introduce the notation 
\[
h^0(X,T \otimes L^k \otimes \mathcal I(ku)) := \dim_{\mathbb C} H^0(X,T \otimes L^k \otimes \mathcal I(ku))\,.
\]

It was conjectured by Cao and Tsuji that $\lim_{k \to \infty} \frac{1}{k^n}h^0(X,T \otimes L^k \otimes \mathcal I(ku))$ always exists (\cite[page 7]{Cao14}, \cite[Section 4.4]{Ts07}). We show that this is indeed the case, and we give a precise formula for the limit in terms of the non-pluripolar volume of $P[u]_\mathcal I$:

\begin{theorem}\label{thm:vol_formula_main} Let  $(L,h e^{-u})$ be a pseudoeffective Hermitian line bundle on $X$, and let $T$ be a holomorphic vector bundle of rank $r$ on $X$. Then 
\begin{equation}\label{eq: main volume_eq}
\lim_{k \to \infty} \frac{1}{k^n}h^0(X,T \otimes L^k \otimes \mathcal I(ku)) = \frac{r}{n!} \int_X \theta_{P[u]_{\mathcal I}}^n\,.
\end{equation}
\end{theorem}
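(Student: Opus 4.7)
\emph{Reduction.} The first step is to exploit the defining property of $P[u]_{\mathcal I}$: by construction, $\mathcal{I}(ku) = \mathcal{I}(kP[u]_{\mathcal I})$ for every $k \geq 0$, so we may replace $u$ with $P[u]_{\mathcal I}$ and assume from the outset that $u$ is itself $\mathcal{I}$-model. Since $T$ is locally trivial of rank $r$, a filtration argument (or a comparison via local frames combined with the standard $O(k^{n-1})$ error control) reduces the problem to the case $r = 1$, the factor $r$ on the right-hand side matching the twist by $T$ on the left.

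\emph{Upper bound.} For $\limsup_k k^{-n} h^0 \leq \frac{1}{n!} \int_X \theta_u^n = \frac{1}{n!} \int_X \theta_{P[u]_{\mathcal I}}^n$, fix a smooth volume form $\omega^n$ and consider the Bergman density
$$
\beta_k(x) = \sum_i |s_i(x)|_{h^k}^2 e^{-ku(x)}
$$
of an $L^2$-orthonormal basis $\{s_i\}$ of $H^0(X, L^k \otimes \mathcal{I}(ku))$, so that $h^0 = \int_X \beta_k \, \omega^n$. Applying the sub-mean value inequality to the quasi-psh function $\log|s|_{h^k}^2 - ku$ on coordinate balls of radius of order $k^{-1/2}$ yields pointwise bounds on $\beta_k$ of size $k^n$ modulated by the local oscillation of $u$. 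A weak-$*$ compactness argument, combined with a mean-value comparison against the non-pluripolar Monge--Ampère measure (in the spirit of Demailly's singular holomorphic Morse inequalities), then shows that any limit of $k^{-n} \beta_k \, \omega^n$ is dominated by $\tfrac{1}{n!} \theta_{P[u]_{\mathcal I}}^n$.

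\emph{Lower bound.} The reverse inequality is the heart of the matter. The plan is to approximate $u = P[u]_{\mathcal I}$ from below by a sequence $u_m$ of $\mathcal{I}$-model potentials with analytic singularities, arranged so that $\int_X \theta_{u_m}^n \to \int_X \theta_u^n$. On a log-resolution $\pi_m : Y_m \to X$ of $\mathcal{I}(ku_m)$, the multiplier ideal becomes principal, cut out by a simple normal crossing divisor $D_{k,m}$, and direct image gives
$$
h^0(X, L^k \otimes \mathcal{I}(ku_m)) = h^0\bigl(Y_m, \pi_m^\ast L^k \otimes \mathcal{O}_{Y_m}(-D_{k,m})\bigr),
$$
a classical volume computation for a big line bundle on $Y_m$ whose leading term is $\tfrac{1}{n!} \int_X \theta_{u_m}^n \cdot k^n$ by Kodaira--Demailly theory. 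Since $u_m \leq u$ forces $\mathcal{I}(ku_m) \subseteq \mathcal{I}(ku)$, we obtain
$$
\liminf_k k^{-n} h^0(X, L^k \otimes \mathcal{I}(ku)) \geq \tfrac{1}{n!} \int_X \theta_{u_m}^n,
$$
and letting $m \to \infty$ closes the argument. The main obstacle is the construction of the sequence $u_m$: standard Demailly regularization decreases Monge--Ampère mass and does not preserve the $\mathcal{I}$-model class, so one must engineer an approximation that remains simultaneously $\mathcal{I}$-model, below $u$, and volume-preserving in the limit. This is precisely the step where the detailed theory of $P[u]_{\mathcal I}$ developed by the authors in their prior work should enter decisively.
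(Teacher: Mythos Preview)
Your reduction step is correct, and the two-sided strategy is right in spirit, but both halves have genuine gaps relative to the paper.

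For the upper bound, the Bergman-density sketch is too vague to be an argument. The sub-mean-value inequality gives $\beta_k \leq Ck^n$ on the locus where $u$ is locally bounded, but controlling weak limits of $k^{-n}\beta_k\,\omega^n$ by the \emph{non-pluripolar} measure $\theta_u^n$ for arbitrary singular $u$ is precisely what is difficult; indeed the Bergman-measure convergence in the paper (Section~\ref{sec:quant}) is proved \emph{after} the volume formula and \emph{uses} it to compare total masses, so invoking it here would be circular. The paper's upper bound (Proposition~\ref{prop:Tsuji_upper_bound}) avoids all of this: since $u\leq u_j^D$ forces $\mathcal I(ku)\subseteq\mathcal I(ku_j^D)$, one simply bounds $h^0$ from above by the analytic-singularity case, applies Bonavero's singular Morse inequalities on a resolution, and lets $j\to\infty$ via Corollary~\ref{cor: measureconv}.

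For the lower bound you correctly name the obstacle, but you do not resolve it, and the paper's solution is \emph{not} the one you anticipate. No analytic approximation $u_m\leq u$ with $\int_X\theta_{u_m}^n\to\int_X\theta_u^n$ is ever constructed; Demailly regularization only produces $u_k^D\geq u$. The key new idea (Lemma~\ref{lem:boundbelow_analyt}, based on Lemma~\ref{lma:exislower} and the $d_\mathcal S$-convergence $[u_k^D]\to[P[u]_\mathcal I]$) is instead the inequality
\[
P[u]_\mathcal I \;\geq\; (1-\beta)\,u_k^D + \beta\, v_{\beta,k}
\]
for small rational $\beta$ and some $v_{\beta,k}\in\PSH(X,\theta)$ of positive mass. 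The right-hand side is not analytic, but its dominant part $(1-\beta)u_k^D$ is. One passes to a resolution of $u_k^D$ and uses an injectivity trick (Lemma~\ref{lem:injective}): tensoring by a nonzero section of $L^{\beta jq}\otimes\mathcal I(\beta jq\,v_{\beta,k})$, which exists by positive mass (Lemma~\ref{lem:posmasssection}), embeds the $u_k^D$-space into the mixed-ideal space. Bonavero on the resolution then yields $\liminf_j j^{-n}h^0\geq \frac{r}{n!}\int_X\theta_{u_k^D}^n - C\beta$, after which one sends $k\to\infty$ and $\beta\to 0$. The passage from K\"ahler currents to general positive-mass $u$ is a further sandwich via Proposition~\ref{prop:posvol_dom_kahler_cur}. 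The essential point is that the analytic data always comes from the approximation \emph{above}; it is the correction term $v_{\beta,k}$ that turns it into a lower bound for $P[u]_\mathcal I$.
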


When $L$ is ample and $T$ is a line bundle, Theorem~\ref{thm:conv_Bergman_equi_main} was obtained using non-Archimedean methods in \cite[Theorem~1.4]{DX22}. As these techniques do not extend to the pseudoeffective case, we take a more elementary approach in this work. In addition, in Section~\ref{subsec:Rlinebundle} we show that the analogue of Theorem~\ref{thm:vol_formula_main} holds for pseudoeffective $\mathbb R$-line bundles as well.

In case $u$ has analytic singularity type with smooth remainder (see Section~\ref{subsec:metricsingtype} for the definition), formula \eqref{eq: main volume_eq} is a well-known consequence of the Riemann--Roch theorem of Bonavero \cite[Théorème~1.1, Corollaire~1.2]{Bon98} (see \cite[Theorem~2.26]{DX22}). In this case, it is possible to apply a resolution of singularities to simplify/principalize the singularity locus of  $\mathcal I(u)$, allowing for a precise asymptotic analysis. In addition, in this case one also has $\int_X \theta_{P[u]_{\mathcal{I}}}^n=\int_X \theta_{u}^n$ \cite[Proposition 2.20]{DX22}, simplifying the right-hand side of \eqref{eq: main volume_eq}. However, for general $u \in \textup{PSH}(X,\theta)$, one is forced to use the measures $\theta_{P[u]_{\mathcal{I}}}^n$, and this is one of the novelties of our work. Indeed, since $u - \sup_X u \leq P[u]_\mathcal I$,  \cite[Theorem~1.1]{WN19} gives that $ \int_X \theta_{u}^n \leq  \int_X \theta_{P[u]_{\mathcal I}}^n$, and strict inequality is possible, as pointed out in \cite[Example~2.19]{DX22}.

Formula \eqref{eq: main volume_eq} is also known for $u := V_\theta:=\sup\left\{\varphi\in\PSH(X,\theta):\varphi\leq 0 \right\}$, the potential with minimal singularity type in $\PSH(X,\theta)$ \cite[Proposition~1.18]{BEGZ10}. In this case we again have $\int_X \theta_{P[V_\theta]_{\mathcal{I}}}^n=\int_X \theta_{V_\theta}^n$, recovering Boucksom's formula \cite{Bo02,BEGZ10}:
\[
\lim_{k \to \infty} \frac{1}{k^n}h^0(X, L^k) = \frac{1}{n!} \int_X \theta_{V_\theta}^n\,.
\]
The above expression is called the volume of the line bundle $L$ in the literature \cite{Bou02,Dem12}, justifying our terminology to call $\frac{1}{n!}\int_X \theta_{P[u]_{\mathcal I}}^n$  the \emph{volume} of the pair $(L,he^{-u})$.

As $T$ is allowed to be an arbitrary vector bundle in Theorem \ref{thm:vol_formula_main}, one can hypothesize a version of this result with $T$ being a coherent sheaf on $X$. This was pointed out to us by L\'aszl\'o Lempert.

At slight expense of precision, we briefly describe the strategy behind the proof of Theorem~\ref{thm:vol_formula_main}. By \cite[Theorem~1.2]{WN19}, both the left and right sides of \eqref{eq: main volume_eq} only depend on the singularity type of the potential $u$. As a result, we can use the metric topology of singularity types introduced in \cite{DDNL5}, and further developed in \cite{DX22}. Let us very briefly recall the terminology. For $v,w \in \PSH(X,\theta)$  we say that 
\begin{itemize}\vspace{-0.1cm}
	\item $v$ is more singular than $w$, ($v \preceq w$) if there exists $C\in \mathbb{R}$  such that $v\leq w+C$;\vspace{-0.1cm}
	\item $v$ has the same singularity type as $w$, ($v \simeq w$), if $v\preceq w$ and $w\preceq v$. \vspace{-0.1cm}
\end{itemize}
The classes $[v] \in \mathcal S := \PSH(X,\theta)/\simeq$ of this latter equivalence relation are called \emph{singularity types}. As pointed out in \cite{DDNL5}, and recalled in Section~\ref{subsec:metricsingtype}, $\mathcal S$ admits a natural pseudometric $d_\mathcal S$, making $(\mathcal S,d_\mathcal S)$ complete (in the presence of positive mass).

By \cite[Proposition~2.20]{DX22}, we have $H^0(X,T \otimes L^k \otimes \mathcal I(ku))=H^0(X,T \otimes L^k \otimes \mathcal I(kP[u]_{\mathcal I}))$ and  $P[u]_{\mathcal I}=P[P[u]_{\mathcal I}]_\mathcal I$ (i.e., $u \to P[u]_{\mathcal I}$ is a projection). Hence, it is enough to prove \eqref{eq: main volume_eq} for potentials of the form $P[u]_{\mathcal I}$. In Section~\ref{sec:closanasing} we show that the singularity types  $[P[u]_{\mathcal I}] \in \mathcal S$ can be $d_\mathcal S$-approximated by analytic singularity types $[u_j] \in \mathcal S$. It is crucial to work with potentials of the form $P[u]_{\mathcal I}$, as the same property does not hold for general potentials $u$.

The proof is then completed by an approximation argument. We take a decreasing sequence $u_j \in \PSH(X,\theta)$ composed of potentials with analytic singularity types such that $d_\mathcal S([u_j],[u]) \to 0$. By Bonavero's theorem we know that \eqref{eq: main volume_eq} holds for each $u_j$. It is known that $d_\mathcal S([u_j],[u]) \to 0$ implies $\int_X \theta_{P[u_j]_\mathcal I}^n \to \int_X \theta_{P[u]_{\mathcal I}}^n$, and we will prove a similar convergence result for the left-hand side of \eqref{eq: main volume_eq} as well, to finish the argument.

Let us mention applications of Theorem~\ref{thm:vol_formula_main} that are treated elsewhere. By \cite{LM09, KK12} we can naturally assign a family of convex Okounkov bodies $\Delta(L)$ to a given big line bundle $L$, depending only on the numerical class of $L$. Moreover, $\vol L=\vol \Delta(L)$. In \cite{Xia21b}, based on Theorem~\ref{thm:vol_formula_main}, the second-named author extended this construction to Hermitian pseudoeffective line bundles: it is possible to define a natural family of convex bodies $\Delta(L,\phi)$ associated with a given Hermitian pseudoeffective line bundle $(L,\phi)$ such that $\vol \Delta(L,\phi)=\vol (L,\phi)$. 

Another application concerns automorphic forms. Consider an automorphic line bundle $L$ on a Shimura variety or mixed Shimura variety $X$. The global sections of $L^k$ correspond to certain automorphic forms. It is a natural and important question in number theory to understand the asymptotic dimensions of these automorphic forms. In general, $X$ is not compact, but it admits natural smooth compactifications \cite{AMRT}. Usually the smooth equivariant metrics on $L$ only extends to singular metrics on a compactification. In this case, Theorem~\ref{thm:vol_formula_main} can be naturally applied. In the special case of Siegel--Jacobi modular forms, this idea has been carried out concretely in the recent preprints \cite{BBGHdJ, BBGHdJ2}. Using a particular case of  Theorem~\ref{thm:vol_formula_main}, Botero--Burgos Gil--Holmes--de Jong managed to prove that the ring of Siegel--Jacobi modular forms is not finitely generated, disproving a well known claim by Runge \cite{Run95}.

\paragraph{Convergence of partial Bergman measures.} As another application of Theorem~\ref{thm:vol_formula_main}, we give a very general convergence result for partial Bergman measures to the partial equilibrium, extending the scope of numerous results in the literature. 

First we recall terminology introduced in \cite{BB10}. A \emph{weighted subset} of $X$ is a pair $(K,v)$ consisting of a closed non-pluripolar subset $K\subseteq X$ and a function $v\in C^0(K)$.
Next, given $u \in \PSH(X,\theta)$, we tailor the definition of $\mathcal I$-model envelope from \eqref{eq: PIdef} to the pair $(K,v)$: 
\begin{equation}\label{eq: PE_potential}
P[u]_{\mathcal{I}}(v):= \usc \big(\sup\left\{ w \in \PSH(X,\theta): w|_K \leq v  \textup{ and }   \mathcal I(tw) \subseteq \mathcal I (tu), \ t \geq 0\right\}\big)\,.
\end{equation}
Here $\usc(\cdot )$ denotes the least upper semi-continuous envelope. In case $K=X$, $\usc(\cdot )$ is unnecessary, moreover we have $P[u]_{\mathcal{I}}(0)=P[u]_{\mathcal{I}}$.

As a consequence of Corollary~\ref{cor:suppthetan} below,  $\theta_{P[u]_{\mathcal{I}}(v)}^n$ does not put mass on the set $(X \setminus K) \cup \{P[u]_{\mathcal{I}}(v) < v\}$. What is more, when $K=X$ and $v \in C^2(X)$, the main result of \cite{DNT19} implies that 
\[
\theta_{P[u]_{\mathcal{I}}(v)}^n = \mathds{1}_{\{P[u]_{\mathcal{I}}(v) = v\}} \theta_v^n\,.
\]
Analogous properties of equilibrium type measures in different contexts were obtained in \cite{SZ03, Brm11,RWN17}. With this in mind, we will call the measure $\theta_{P[u]_{\mathcal{I}}(v)}^n$ the \emph{partial equilibrium (measure)} associated to $u$ and $(K,v)$. Theorem~\ref{thm:conv_Bergman_equi_main} will further justify  this choice of terminology.

Let $(T,h_T)$ be a Hermitian line bundle. Let $\nu$ be a Borel probability measure on $K$.
We consider the following norms on $H^0(X,L^k \otimes T)$:
\[
\begin{aligned}
N^k_{v,\nu}(s) :=& \left(\int_K h^k \otimes h_T(s,s) e^{-kv} \,\mathrm{d}\nu\right)^{\frac{1}{2}}\,, \ \ \ \ N^k_{v,K}(s) :=& \sup_{K} \big( h^k \otimes h_T(s,s)e^{-kv}\big)^{\frac{1}{2}}\,.
\end{aligned}
\]
Note that  we always have  $N^k_{v,\nu}(s)\leq  N^k_{v,K}(s)$.
The measure  $\nu$  is a \emph{Bernstein--Markov measure} with respect to $(K,v)$ if  for each $\varepsilon>0$, there is a constant $C_\varepsilon>0$ such that
\begin{equation}\label{eq:BM_main}
     N^k_{v,K}(s) \leq C_\varepsilon e^{\varepsilon k} N^k_{v,\nu}(s)
\end{equation}
for any $s \in H^0(X,L^k \otimes T)$. A broad class of Bernstein--Markov measures are probability volume forms with respect to $(X,v)$, where $v \in C^\infty(X)$. For more complicated examples we refer to \cite[Section 1.2]{BBWN11}.

We introduce the associated \emph{partial Bergman kernels}: for any $k\in \mathbb{N}$, $x\in K$,
 \begin{equation}\nonumber
     B^k_{v,u, \nu}(x) := \sup \left\{h^k \otimes h_T(s,s)e^{-kv}(x): N^k_{v,\nu}(s,s) \leq 1\,,  s \in H^0(X,L^k \otimes T \otimes \mathcal I(ku)) \right\}\,.
 \end{equation}
The associated partial Bergman measures on $X$ are identically zero on $X \setminus K$ and on $K$ are defined in the following manner 
\begin{equation}
\beta^k_{v,u,\nu} : = \frac{n!}{k^n} B^k_{v,u,\nu} \,\mathrm{d}\nu\,.
\end{equation}

Our next result states that the partial Bergman measures $\beta^k_{v,u,\nu}$ quantize the 
non-pluripolar measure $\theta_{P[u]_{\mathcal{I}}(v)}^n$, the partial equilibrium of this setting:
\begin{theorem}\label{thm:conv_Bergman_equi_main} Let  $(L,h e^{-u})$ be a pseudoeffective Hermitian holomorphic line bundle on $X$, and let $(T,h_T)$ be a Hermitian line bundle. Suppose that $\nu$ is a Bernstein--Markov measure with respect to a weighted subset $(K,v)$. Then $\beta^k_{v,u,\nu} \rightharpoonup \theta_{P[u]_{\mathcal{I}}(v)}^n$ weakly, as $k \to \infty$.
\end{theorem}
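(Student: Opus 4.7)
The plan follows the variational approach of Berman--Boucksom~\cite{BB10}: encode the partial Bergman measures as derivatives of a convex functional, establish pointwise convergence of this functional using Theorem~\ref{thm:vol_formula_main}, and pass to the limit in derivatives via convexity. Diagonal summation over an $N^k_{v,\nu}$-orthonormal basis of $H^0(X,L^k \otimes T \otimes \mathcal I(ku))$ first gives $\int_K B^k_{v,u,\nu}\,\mathrm{d}\nu = h^0(X,L^k \otimes T \otimes \mathcal I(ku))$, so Theorem~\ref{thm:vol_formula_main}, together with the equality $\int_X\theta^n_{P[u]_\mathcal I(v)} = \int_X\theta^n_{P[u]_\mathcal I}$ (both potentials share the same $\mathcal I$-model singularity type), yields the convergence of total masses $\int_X \beta^k_{u,v,\nu} \to \int_X \theta^n_{P[u]_\mathcal I(v)}$. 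It thus suffices to prove $\int_X f\,\beta^k_{u,v,\nu} \to \int_X f\,\theta^n_{P[u]_\mathcal I(v)}$ for every $f \in C^0(X)$.

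For $f \in C^0(X)$ and an $N^k_{v,\nu}$-orthonormal basis $\{s_\alpha\}$ of $H^0(X,L^k \otimes T \otimes \mathcal I(ku))$, introduce the ``free energy''
\begin{equation*}
\mathcal L_k(f) := \frac{n!}{k^{n+1}}\log\det\!\left(\int_K (s_\alpha,s_\beta)_{h^k\otimes h_T}\,e^{kf-kv}\,\mathrm{d}\nu\right)_{\!\alpha,\beta}.
\end{equation*}
This quantity is basis-independent, satisfies $\mathcal L_k(0)=0$, is convex in $f$ (via Andreief's identity, which rewrites $\log\det$ as a log-MGF on a product space), and a direct differentiation yields the key encoding
\begin{equation*}
\frac{d}{dt}\bigg|_{t=0}\mathcal L_k(tf) = \int_X f\,\beta^k_{u,v,\nu}.
\end{equation*}

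The core technical claim is that $\mathcal L_k(f) \to \mathcal L(f)$ pointwise on $C^0(X)$, where $\mathcal L$ is a convex functional built from the Monge--Amp\`ere energy of the $\mathcal I$-model envelope $w \mapsto P[u]_\mathcal I(w)$ in the sense of \cite{BB10,DDNL5,DX22}. The \emph{upper bound} $\limsup_k \mathcal L_k(f) \leq \mathcal L(f)$ follows from the Bernstein--Markov inequality applied to the continuous weight $v-f$: it forces the normalized log-Bergman envelope to lie asymptotically below $P[u]_\mathcal I(v-f)-v+f$, and Theorem~\ref{thm:vol_formula_main} then controls the remaining total mass. The \emph{lower bound} is produced by approximating $P[u]_\mathcal I(v-f)$ by potentials of analytic singularity type, as in Section~\ref{sec:closanasing}, and applying Theorem~\ref{thm:vol_formula_main} to an auxiliary potential realizing those singularities, thereby obtaining test sections of prescribed $N^k_{v-f,\nu}$-norm. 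Once $\mathcal L_k \to \mathcal L$ pointwise with both convex, convex analysis gives convergence of one-sided directional derivatives at any $f$ where $\mathcal L$ is G\^ateaux differentiable; the differentiability of the $\mathcal I$-model Monge--Amp\`ere energy along $C^0$-perturbations (available from~\cite{DDNL5,DX22}) then identifies
\begin{equation*}
\frac{d}{dt}\bigg|_{t=0}\mathcal L(tf) = \int_X f\,\theta^n_{P[u]_\mathcal I(v)},
\end{equation*}
matching the derivative of $\mathcal L_k$ and closing the argument.

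The main obstacle is the lower bound in $\mathcal L_k \to \mathcal L$: in the ample setting this would follow from an Ohsawa--Takegoshi $L^2$-extension, but for a general pseudoeffective line bundle one must approximate the model envelope $P[u]_\mathcal I(v-f)$ by analytic singularity types and carefully apply Theorem~\ref{thm:vol_formula_main} to a twisted auxiliary potential. Establishing G\^ateaux differentiability of the limiting convex functional along $C^0$-perturbations of $v$ constitutes a further delicate input.
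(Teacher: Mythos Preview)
Your high-level framework is the same one the paper uses: encode partial Bergman measures as derivatives of a Donaldson-type functional $\mathcal L_k$, prove $\mathcal L_k\to\mathcal L$ pointwise, and pass to derivatives via concavity/convexity. The paper carries this out in Section~\ref{sec:quant}.

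The gap is in how you propose to obtain $\mathcal L_k\to\mathcal L$ directly for general $u$. Theorem~\ref{thm:vol_formula_main} is a dimension count; it does not by itself give the \emph{lower bound} on a $\log\det$, which requires controlling where sections concentrate, not just how many there are. Your sentence ``applying Theorem~\ref{thm:vol_formula_main} to an auxiliary potential\ldots thereby obtaining test sections of prescribed $N^k_{v-f,\nu}$-norm'' does not explain how a dimension formula produces an energy lower bound. Likewise the \emph{upper bound} needs more than Bernstein--Markov: for a section $s\in H^0(X,L^k\otimes T\otimes\mathcal I(ku))$ there is no reason, for general $u$, that $\frac{1}{k}\log h^k\otimes h_T(s,s)$ is a candidate for the envelope $P[u]_{\mathcal I}(v-f)$; one needs a singularity comparison of Brian\c{c}on--Skoda type (cf.\ Remark~\ref{rem: Bergman_approx}), and that argument only works when $[u]\in\mathcal A(X,\theta)$. (Also note $\nu\in\BM(K,v)$ does not automatically give $\nu\in\BM(K,v-f)$, so invoking the BM inequality for the weight $v-f$ needs care.)

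The paper's resolution is to reverse the order of implication at the base step. It first proves weak convergence of the partial Bergman measures for $[u]\in\mathcal A(X,\theta)$, $\theta_u$ a K\"ahler current, and $v\in C^\infty(X)$, using pointwise Bergman kernel upper bounds from \cite{Brm11} together with the estimate $[\tilde u^D_k]\preceq[\alpha_k u]$ of Remark~\ref{rem: Bergman_approx} (Proposition~\ref{prop: smooth_weak_conv}). \emph{From this} it derives $\mathcal L_{k,u}\to\mathcal I^\theta_{[u]}$ by integrating the derivative (Proposition~\ref{prop: quant_I_smooth}), extends to general weighted subsets and BM measures by sandwiching with the approximants of Lemma~\ref{lem: global_env_approx} (Proposition~\ref{prop: quant_I_algebraic_BM}, Corollary~\ref{cor:LktoI}), and only then uses the concavity trick to pass back to Bergman measure convergence for analytic $u$ (Proposition~\ref{prop:weakconvana}). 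Finally, general $u$ is reached by two monotone approximations---Demailly regularization from above (Proposition~\ref{prop:mainKahcurr}) and K\"ahler-current approximation from below (Theorem~\ref{thm: pBMconvergence})---using only measure monotonicity and the mass equality from Theorem~\ref{thm:vol_formula_main}. So your endgame (convex functional $+$ derivatives) is right, but the missing ingredient is a genuine base case for analytic singularity types established by pointwise estimates, not by the volume formula alone.
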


To our knowledge, this result is new even in the case when $L$ is assumed to be ample. 
An important particular case is when $T$ is trivial, $K=X$, $v\equiv 0$ and $\mu = \omega^n/\int_X \omega^n$. In this case we simply denote $\beta^k_{u}:=\beta^k_{0,u,\omega^n}$ and recall that $P[u]_{\mathcal{I}} = P[u]_{\mathcal{I}}(0)$. We have the following corollary:
\begin{corollary}\label{cor: main}
For $u \in \PSH(X,\theta)$ we have that $\beta^k_{u} \rightharpoonup \theta_{P[u]_{\mathcal{I}}}^n$ weakly, as $k \to \infty$.
\end{corollary}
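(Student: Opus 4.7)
The plan is to derive Corollary~\ref{cor: main} as an immediate specialization of Theorem~\ref{thm:conv_Bergman_equi_main}. The specific choices I would make are: take $(T,h_T)$ to be the trivial Hermitian line bundle on $X$, take $K = X$, take the weight $v \equiv 0$, and take $\nu = \omega^n / \int_X \omega^n$. The first step is purely bookkeeping: since $K = X$, the outer $\usc(\cdot)$ in \eqref{eq: PE_potential} is redundant, and the constraint $w|_K \leq v$ becomes simply $w \leq 0$, so the defining supremum reduces to that of \eqref{eq: PIdef} and $P[u]_{\mathcal{I}}(0) = P[u]_{\mathcal{I}}$. Similarly, the norms $N^k_{0, \nu}$, the partial Bergman kernel $B^k_{0, u, \nu}$, and the partial Bergman measure $\beta^k_{u, 0, \nu}$ coincide with the objects denoted $\beta^k_u$ in the statement of the corollary.

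The only nontrivial input needed to invoke Theorem~\ref{thm:conv_Bergman_equi_main} is the verification that $\omega^n / \int_X \omega^n$ is a Bernstein--Markov measure with respect to the weighted subset $(X, 0)$, i.e.\ that for every $\varepsilon > 0$ there exists $C_\varepsilon > 0$ satisfying \eqref{eq:BM_main} for all $s \in H^0(X, L^k)$. This is precisely the content of the remark made just after the definition \eqref{eq:BM_main}, namely that probability volume forms on $X$ are Bernstein--Markov with respect to $(X, v)$ whenever $v \in C^\infty(X)$; concrete proofs in the form suitable for our setting are recorded in \cite[Section 1.2]{BBWN11}. The underlying mechanism is the standard sub-mean value property: for any section $s \in H^0(X, L^k)$, the function $\log h^k(s, s)$ is $k\theta$-plurisubharmonic on $X$, and on a fixed finite cover of $X$ by coordinate balls one obtains an $L^\infty$-vs-$L^2$ estimate with a constant growing at most polynomially in $k$, which is vastly better than the $e^{\varepsilon k}$ loss allowed in the definition.

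Granted these two points, Theorem~\ref{thm:conv_Bergman_equi_main} applies directly and yields $\beta^k_u \rightharpoonup \theta_{P[u]_{\mathcal{I}}}^n$ weakly as $k \to \infty$. I do not foresee any serious obstacle: the argument is essentially an unpacking of definitions together with the quoted Bernstein--Markov property of the volume form. The only point requiring any care is to check that the various implicit identifications (trivial line bundle with $h_T \equiv 1$, the normalization of $\omega^n$, and the equality $P[u]_{\mathcal{I}}(0) = P[u]_{\mathcal{I}}$) are consistent with the conventions used in the statement of Theorem~\ref{thm:conv_Bergman_equi_main}.
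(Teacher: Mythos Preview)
Your proposal is correct and follows exactly the approach indicated in the paper: the corollary is stated immediately after Theorem~\ref{thm:conv_Bergman_equi_main} as the specialization to $T$ trivial, $K=X$, $v\equiv 0$, and $\nu=\omega^n/\int_X\omega^n$, with the paper noting (as you do) that probability volume forms are Bernstein--Markov with respect to $(X,v)$ for smooth $v$.
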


When $T$ is the trivial line bundle and  $u$ has minimal singularity, Theorem~\ref{thm:conv_Bergman_equi_main} recovers \cite[Theorem~B]{BBWN11}. As part of our argument, in Section~\ref{sec:env} and Section~\ref{sec:quant} we also extend \cite[Theorems~A and B]{BB10} to our partial setting. We suspect that using our results one can now prove equidistribution theorems for (partial) Fekete point configurations, extending \cite[Theorem A]{BBWN11} to our context. However, to stay brief we omit this discussion here.

When $T$ is the trivial line bundle, $K=X$, $v \in C^2(X)$, $\mu = \omega^n/\int_X \omega^n$ and $u$ has  minimal or exponentially continuous  singularity type, we are essentially in the setting of \cite[Theorem~1.4]{Brm11} and \cite[Theorem~1.4]{RWN17}.
Our Theorem~\ref{thm:conv_Bergman_equi_main} extends these results, to the extent that our singular setting allows. Indeed, as $[u]$ is of $\mathcal I$-model type in these cases, we automatically get that $P[u](v) = P[u]_\mathcal I(v)$, where 
\[
P[u](v) := \usc \sup\left\{ h \in \PSH(X,\theta): \ h \leq v, \  [h] \preceq [u]\right\}\,.
\]
See Section~\ref{sec:closanasing} for more details. Hence, in this case the (partial) Bergman measures converge weakly to $\theta_{P[u](v)}^n$. In \cite{Brm11,RWN17} the authors actually argue pointwise convergence of the density functions as well, on the locus where $P[u](v) = v$ and $\theta_v>0$. As our $v$ in Theorem~\ref{thm:conv_Bergman_equi_main} is only continuous, it is not clear how to interpret the condition $\theta_v>0$ in our context.

Observe that $\int_X \beta^k_{v,u,\nu}=n! k^{-n}h^0(X,L^k \otimes T \otimes \mathcal I(ku))$. In particular, Theorem~\ref{thm:conv_Bergman_equi_main} recovers Theorem~\ref{thm:vol_formula_main} after an integration. In fact, this plays a crucial role in the argument of Theorem~\ref{thm:conv_Bergman_equi_main}. As all the measures $\beta^k_{v,u,\mu}$ have uniformly bounded masses, they form a weakly compact family. The difficulty is to prove that each  subsequential limit measure is dominated by $\theta_{P[u]_{\mathcal{I}}(v)}^n$. Then the argument is concluded by simply comparing total masses of the limit measures.

The literature on partial Bergman kernels/measures has been fast expanding in many directions. One particular line of study concerns partial Bergman kernels arising from sections vanishing along a smooth divisor $V$, with the vanishing order increasing in the large limit. As pointed out in numerous works mentioned below, this setup is closely related to ours, when one considers $L^2$ integrable sections with respect to a weight that has logarithmic singularity along $V$. It would be interesting to study this connection in the future. One of the first works on this topic was that of Berman \cite{Brm11}, who proved $L^1$ convergence of the volume densities of the partial Bergman measures. Ross--Singer \cite{RS17} and Zelditch--Zhou \cite{ZeZh19} considered this problem in the presence of an $S^1$-symmetry near the vanishing locus, identified the forbidden region in terms of the Hamiltonian action, and gave detailed asymptotic expansions. When symmetries are not present, Coman--Marinescu \cite{CM17} proved that the partial Bergman kernel has exponential decay near the vanishing locus.
For recent extensions to smooth and singular subvarieties $V$, see \cite{CMN19, Su20}.

Applications of partial Bergman kernels related to  test configurations and geodesic rays were explored in \cite{RWN14} and \cite{DX22}.

In another line of study, Zelditch--Zhou initiated the study of partial Bergman kernels that arise from spectral subspaces of the Toeplitz quantization of a smooth Hamiltonian \cite{ZeZh20}. They showed that their partial density of states also converges to an equilibrium type measure, suggesting possible connections with our Theorem~\ref{thm:conv_Bergman_equi_main}. Specifically, given the Hamiltonian data $(H,E)$ of \cite{ZeZh20}, we wonder if there exists $v \in C^\infty(X)$ and $u \in \textup{PSH}(X,\theta)$ such that $\{H(z) < E\} = \{P[u]_\mathcal I(v)=v\}$. If the answer to this question is affirmative, then using the terminology of \cite[Main Theorem]{ZeZh20} we would obtain that 
$\Pi_{k,\mathcal S_k} \omega^n \rightharpoonup \theta^n_{P[u]_\mathcal I(v)}$.

\paragraph{Acknowledgements. }
We would like to thank Bo Berndtsson, Junyan Cao, Jakob Hultgren, L\'aszl\'o Lempert, Yaxiong Liu, Duc-Viet Vu, and Steven Zelditch for discussions related to the topic of the paper.  We thank the anonymous referee for suggesting many improvements.
The first author was partially supported by an Alfred P. Sloan Fellowship and National Science Foundation grant DMS-1846942.

\paragraph{Organization.} In Section~\ref{sec:pre} we recall the relevant notions of envelopes, and adapt results in the literature about the metric topology of singularity types to our context. In Section~\ref{sec:closanasing} we characterize the closure of analytic singularity types in a big cohomology class. In Section~\ref{sec:proofmainthm} we prove Theorem~\ref{thm:vol_formula_main}. In Section~\ref{sec:env} and Section~\ref{sec:quant} we extend the related results of \cite{BB10} and \cite{BBWN11} to our partial context, and prove Theorem \ref{thm:conv_Bergman_equi_main}.

\section{Preliminaries}\label{sec:pre}

\subsection{Non-pluripolar products and singularity types}\label{subsec:singtype}
Let $X$ be a compact Kähler manifold. Let $\theta$ be a smooth real $(1,1)$-form on $X$. 
Let $\PSH(X,\theta)$ be the set of $\theta$-plurisubharmonic ($\theta$-psh) functions on $X$. Assume that the cohomology class of $\theta$ is pseudoeffective, i.e., that $\PSH(X,\theta)$ is non-empty.

Let $V_\theta := \sup\{v \in \PSH(X,\theta) : v \leq 0\}$ be the potential with minimal singularity in $\PSH(X,\theta)$. We recall the construction of non-pluripolar product associated to $u_1,\ldots, u_n \in \PSH(X,\theta)$ from \cite{BEGZ10}.

Let $k \in \mathbb  N$. Using Bedford--Taylor theory \cite{BT76}, one can consider the  following sequence of measures on $X$: 
\[
\mathds{1}_{\bigcap_j\{\varphi_j>V_{\theta}-k\}}(\theta+\ddc \max(\varphi_1, V_{\theta}-k))\wedge\cdots\wedge (\theta+\ddc\max(\varphi_n, V_{\theta}-k))\,.
\] 
It has been shown in \cite[Section~1]{BEGZ10} that these measures 
converge weakly to the so called \emph{non-pluripolar product}  $\theta_{\varphi_1 } \wedge\cdots\wedge\theta_{\varphi_n }$, as $k \to \infty$. All complex Monge--Ampère measures will be interpreted in this sense in our work.

The resulting positive measure $\theta_{\varphi_1 } \wedge\cdots\wedge\theta_{\varphi_n }$ does not charge pluripolar sets. The particular case when $u := u_1 = \cdots = u_n$ will yield $\theta_{u}^n$, the non-pluripolar complex Monge--Ampère measure of $u$.

For any $u \in \PSH(X,\theta)$, let $\mathcal{I}(u)$ denote Nadel's multiplier ideal sheaf of $u$, namely, the coherent ideal sheaf of holomorphic functions $f$, such that $|f|^2e^{-u}$ is integrable. These objects allow to introduce an algebraic refinement of the notion of singularity type from the introduction. For $u,v\in \PSH(X,\theta)$ we have the following relations:
    \begin{itemize}\vspace{-0.1cm}
	\item $u \preceq_\mathcal I v$ (also written as $[u] \preceq_\mathcal I [v]$) if  $\mathcal I(tu)\subseteq \mathcal I(tv)$ for all $t>0$;\vspace{-0.1cm}
	\item $ u \simeq_\mathcal I v$ 
	(also written as $[u] \simeq_\mathcal I [v]$) if $u\preceq_\mathcal I v$ and $v\preceq_\mathcal I u$. \vspace{-0.1cm}
\end{itemize}
The relation $\simeq_\mathcal I$ induces equivalence classes called $\mathcal I$\emph{-singularity types} $[u]_\mathcal I$, for any $u \in \PSH(X,\theta)$. As pointed out in \cite{DX22},  $[u]=[v]$ implies $[u]_\mathcal I=[v]_\mathcal I$, but not vice versa.  

The different equivalence relations ($\simeq$ and $\simeq_\mathcal I$) admit two different envelope notions, as already alluded to in the introduction. Let us revisit them in a very general setup, that will be needed later. Let $K \subseteq X$ compact and non-pluripolar, and let $v:K \to [-\infty,\infty]$ measurable.
To such $v$ and $u \in \PSH(X,\theta)$ we associate the following notion of envelope:
\[
\begin{aligned}
P_K^{\theta}[u](v)&:=\usc\big(\sup \left\{\, w\in \PSH(X,\theta): [w]\preceq [u], w|_K \leq v \,\right\} \big)\,,\\
P_K^{\theta}[u]_{\mathcal{I}}(v)&:=\usc\big(\sup \left\{\, w\in \PSH(X,\theta): [w]\preceq_{\mathcal{I}} [u], w|_K \leq v \,\right\} \big)\,.
\end{aligned}
\]
Here and later $\usc(\cdot)$ denotes the upper semi-continuous regularization.
We omit $\theta$ and $X$ from our notations when there is no risk of confusion. In addition, we will use the following shorthand notation, ubiquitous in the literature:
\[
P[u]:=P^{\theta}_X[u](0)\,,\quad P[u]_{\mathcal{I}}:=P^{\theta}_X[u]_{\mathcal{I}}(0)\,.
\]
A potential $u\in \PSH(X,\theta)$ is \emph{model} if $u=P[u]$, and it is  \emph{$\mathcal{I}$-model} if $u=P[u]_{\mathcal{I}}$.

For any usc function $f:X \to [-\infty,\infty)$ we define
\begin{equation}
P^{\theta}(f):=\usc\sup\left\{\,\varphi\in \PSH(X,\theta):\varphi\leq f \,\right\}\,.
\end{equation}
Building on the above, for usc functions $f_1,\ldots,f_N$ we define a notion of rooftop envelope:
\[
P^{\theta}(f_1,\ldots,f_N):=P^{\theta}\left(\min\{f_1,\ldots,f_N\} \right)\,.
\]

The following lemma was essentially proved in \cite{DDNL5}. We recall the short proof as a courtesy to the reader:

\begin{lemma}\label{lem:rooftop_mod_I_mod} Let $u,v \in \PSH(X,\theta)$ such that $P^{\theta}(u,v) \in \PSH(X,\theta)$. If $u,v$ are model (resp. $\mathcal I$-model) then $P^{\theta}(u,v)$ is also model (resp. $\mathcal I$-model).
\end{lemma}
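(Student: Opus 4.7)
My plan is to write $\varphi := P^\theta(u,v)$ and verify directly that $\varphi = P[\varphi]$ (resp. $\varphi = P[\varphi]_{\mathcal I}$) by proving both inequalities. Observe first that since $u,v$ are model (resp. $\mathcal I$-model), they are both $\leq 0$ by definition of the envelopes, so $\varphi \leq \min(u,v) \leq 0$, and the hypothesis that $P^\theta(u,v) \in \PSH(X,\theta)$ lets us treat $\varphi$ as a genuine element of $\PSH(X,\theta)$. The inequality $\varphi \leq P[\varphi]$ (resp. $\varphi \leq P[\varphi]_{\mathcal I}$) is then immediate: $\varphi$ itself is a competitor in the sup defining $P[\varphi]$ (resp. $P[\varphi]_{\mathcal I}$), because $[\varphi]\preceq [\varphi]$ and $\varphi \leq 0$.

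For the reverse inequality, I would take an arbitrary candidate $h \in \PSH(X,\theta)$ with $h \leq 0$ and $[h] \preceq [\varphi]$ (in the $\mathcal I$-case: $[h]\preceq_{\mathcal I}[\varphi]$), and show $h \leq \varphi$. Since $\varphi \leq u$ (with constant $0$) we get $[\varphi] \preceq [u]$; in the $\mathcal I$-case this also implies $[\varphi] \preceq_{\mathcal I} [u]$ because $\preceq$ clearly entails $\preceq_{\mathcal I}$ (if $\varphi \leq u + C$ then $|f|^2e^{-tu} \leq e^{tC}|f|^2e^{-t\varphi}$, so $\mathcal I(t\varphi)\subseteq \mathcal I(tu)$). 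Transitivity gives $[h]\preceq [u]$ (resp. $[h]\preceq_{\mathcal I}[u]$), so $h$ is a candidate in the envelope defining $P[u]$ (resp. $P[u]_{\mathcal I}$); the hypothesis $u = P[u]$ (resp. $u = P[u]_{\mathcal I}$) then forces $h \leq u$. The same argument with $v$ in place of $u$ yields $h \leq v$, hence $h \leq \min(u,v)$ on $X$, and therefore $h \leq P^\theta(u,v) = \varphi$. Taking the sup over $h$ and then the usc-regularization concludes that $P[\varphi] \leq \varphi$ (resp. $P[\varphi]_{\mathcal I} \leq \varphi$).

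The argument is essentially a bookkeeping exercise with the definitions; the only point that requires a moment of care is the implication $\preceq \Rightarrow \preceq_{\mathcal I}$ used in the $\mathcal I$-case, and the observation that ordinary inequality $\varphi \leq u$ already yields $[\varphi]\preceq [u]$ (and hence $[\varphi]\preceq_{\mathcal I}[u]$) without any need to exploit a finer relation between singularity types of $\varphi$ and of $u,v$. I do not foresee a genuine obstacle; the statement is really a formal consequence of the fact that $P[\cdot]$ and $P[\cdot]_{\mathcal I}$ are monotone with respect to $\preceq$ and $\preceq_{\mathcal I}$ respectively, combined with the idempotence of these projection operators on their images.
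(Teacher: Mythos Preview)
Your proof is correct and follows essentially the same approach as the paper's: from $P^\theta(u,v)\leq \min(u,v)$ one deduces $P[P^\theta(u,v)]\leq P[u]=u$ and $P[P^\theta(u,v)]\leq P[v]=v$, hence $P[P^\theta(u,v)]\leq P^\theta(u,v)$, and the $\mathcal I$-model case is identical. The only difference is that you unpack the monotonicity of $P[\cdot]$ by working with an arbitrary candidate $h$, whereas the paper applies it directly to $P[P^\theta(u,v)]$; the content is the same.
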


\begin{proof} Since $P(u,v) \leq \min(u,v)$, we get that $P[P(u,v)] \leq P[u] = u$ and 
$P[P(u,v)] \leq P[v] = v$, hence $P[P(u,v)] \leq P(u,v)$. This implies $P[P(u,v)] = P(u,v)$ as desired. The statement about $\mathcal I$-model potentials is proved the same way.
\end{proof}

For any $x\in X$ and $u \in \textup{PSH}(X,\theta)$ we  denote by $\nu(u,x)$ the Lelong number of $\varphi$ at $x$. 
We recall the following result from \cite{BFJ08}, adapted to our context in \cite[Corollary~2.16]{DX22}:
\begin{proposition}\label{prop:Ilelong}
Let $u,v\in \PSH(X,\theta)$. Then\\
\noindent (i) $[u]\preceq_{\mathcal{I}} [v]$ if and only if for any smooth modification $\pi:Y\rightarrow X$, any $y\in Y$, $\nu(\pi^*u,y)\geq \nu(\pi^*v,y)$.\\
\noindent (i) $[u]\simeq_{\mathcal{I}} [v]$ if and only if for any smooth modification $\pi:Y\rightarrow X$, any $y\in Y$, $\nu(\pi^*u,y)= \nu(\pi^*v,y)$.
\end{proposition}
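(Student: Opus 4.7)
The plan is to prove part (i); part (ii) then follows immediately from (i) applied in both directions.

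The first step is to reformulate the pointwise Lelong number condition in terms of divisorial valuations. Given any modification $\pi: Y \to X$ and any point $y \in Y$, blowing up $y$ produces a further modification $\sigma: Y' \to Y$ with exceptional divisor $E'$ such that the generic Lelong number $\nu_{E'}((\pi\circ\sigma)^*u) = \nu(\pi^*u, y)$; conversely, the generic Lelong number along a prime divisor $E$ is attained at very general points of $E$. Consequently, the condition in the proposition is equivalent to: for every prime divisor $E$ on every smooth modification $\pi: Y \to X$, $\nu_E(\pi^*u) \geq \nu_E(\pi^*v)$.

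For the direction $[u] \preceq_{\mathcal{I}}[v] \Rightarrow$ Lelong number inequality, I would argue by contradiction. Suppose $\nu_E(\pi^*u) < \nu_E(\pi^*v)$ for some prime divisor $E \subset Y$, and pick $\alpha$ strictly between them. Working near a generic point $y_0 \in E$ with local coordinates in which $E = \{w_1 = 0\}$, I would exhibit a monomial test section $g = w_1^k$ with carefully chosen vanishing order so that Skoda's integrability theorem on $Y$ forces $|g|^2 e^{-t\pi^*u}$ to be locally integrable at $y_0$ while $|g|^2 e^{-t\pi^*v}$ is not, for a suitable $t > 0$. Pushing this data forward via $\pi$ and correcting by the relative canonical bundle $K_{Y/X}$ (so that the change-of-variables formula translates $L^2$-integrability on $Y$ into $L^2$-integrability on $X$) produces a holomorphic function $f$ on a neighborhood of $\pi(y_0) \in X$ with $f \in \mathcal I(tu)$ but $f \notin \mathcal I(tv)$, contradicting the hypothesis.

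For the reverse implication, assume $\nu_E(\pi^*u) \geq \nu_E(\pi^*v)$ for every prime divisor on every modification. I would approximate $u$ and $v$ via Demailly's regularization by quasi-psh potentials $u_m, v_m$ with analytic singularities satisfying $\mathcal I(mu_m) = \mathcal I(mu)$ (and similarly for $v$), whose Lelong numbers along every divisorial valuation converge to those of $u, v$. For analytic singularities, after passing to a common log resolution $\pi': Y \to X$ where the singular loci become simple normal crossing divisors, the multiplier ideals $\mathcal I(tu_m), \mathcal I(tv_m)$ are computed algebraically via vanishing orders along the irreducible components together with the relative canonical class. The divisorial Lelong number hypothesis translates directly into a coefficient-wise inequality of these normal crossing expressions, giving $\mathcal I(tu_m) \subseteq \mathcal I(tv_m)$ for each $m$; letting $m \to \infty$ yields the desired inclusion.

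The main technical obstacle is the bookkeeping of the relative canonical divisor $K_{Y/X}$ when translating multiplier ideals between $X$ and a modification $Y$, which is precisely the content of the change-of-variables formula used in both directions. An economical alternative is to establish only the reduction in Step 1 and then directly invoke the main result of \cite{BFJ08}, which provides the equivalence between $\mathcal I$-ordering and divisorial valuation ordering as a black box.
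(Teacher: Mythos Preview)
The paper does not prove this proposition at all: it is stated as a citation to \cite{BFJ08}, adapted to the quasi-psh setting in \cite[Corollary~2.16]{DX22}. So your final ``economical alternative'' is precisely the route the paper takes, and for the purposes of the paper that is the complete argument.

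Your independent sketch has the right architecture (reduce to divisorial valuations, then argue each implication), but both directions as written have genuine gaps that would need real work to close. In the forward direction, the germ $w_1^k$ lives on $Y$, not on $X$; there is no ``push-forward'' of a local germ along a modification. What you actually need is a function $f\in\mathcal O_{X,x}$ whose pullback $f\circ\pi$ has a prescribed order along $E$, and then you must verify that $|f|^2e^{-tu}$ is integrable on a full neighbourhood of $x$ --- equivalently, that $|f\circ\pi|^2|J_\pi|^2e^{-t\pi^*u}$ is integrable on all of $\pi^{-1}(U)$, not just near your chosen $y_0$. When $E$ is exceptional this is a genuine issue: other exceptional components over $x$ can spoil integrability, and controlling all of them simultaneously is exactly the subtle part of the valuative criterion that \cite{BFJ08} handles. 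In the reverse direction, Demailly's approximants satisfy $\nu_E(u_m)\leq \nu_E(u)$ (they are less singular), so from $\nu_E(u)\geq\nu_E(v)$ you cannot directly deduce $\nu_E(u_m)\geq\nu_E(v_m)$; convergence of Lelong numbers does not preserve the ordering.

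In short: your closing sentence is the correct proof, and matches the paper. The rest is a reasonable outline of what a self-contained proof of the BFJ08 result would look like, but it is not yet a proof.
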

\begin{corollary}\label{cor:linearIcomp}
Let $u_0,u_1,v_0,v_1\in \PSH(X,\theta)$ with $[u_0]\preceq_{\mathcal{I}} [v_0]$, $[u_1]\preceq_{\mathcal{I}} [v_1]$. For any $t\in [0,1]$ we have $[(1-t)u_0+tu_1]\preceq_{\mathcal{I}} [(1-t)v_0+tv_1]$.
\end{corollary}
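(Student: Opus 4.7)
The plan is to reduce the corollary directly to Proposition~\ref{prop:Ilelong}, using the well-known linearity of Lelong numbers with respect to convex combinations of quasi-psh functions.

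First I would note that $\PSH(X,\theta)$ is convex: for any $t \in [0,1]$ we have $(1-t)u_0 + tu_1 \in \PSH(X,\theta)$ (since $\theta + \ddc((1-t)u_0+tu_1) = (1-t)(\theta+\ddc u_0) + t(\theta+\ddc u_1) \geq 0$), and similarly $(1-t)v_0 + tv_1 \in \PSH(X,\theta)$, so the statement to be proved is at least meaningful.

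Next, I would invoke Proposition~\ref{prop:Ilelong}(i): to verify $[(1-t)u_0+tu_1] \preceq_{\mathcal{I}} [(1-t)v_0+tv_1]$, it suffices to show, for every smooth modification $\pi:Y\to X$ and every $y\in Y$, the inequality
\[
\nu(\pi^*((1-t)u_0+tu_1),y) \;\geq\; \nu(\pi^*((1-t)v_0+tv_1),y).
\]
Since pullback is linear, and Lelong numbers are linear under nonnegative scalar multiplication and additive under sums of (quasi-)psh functions, both sides split as convex combinations:
\[
\nu(\pi^*((1-t)u_0+tu_1),y) = (1-t)\nu(\pi^*u_0,y) + t\,\nu(\pi^*u_1,y),
\]
and likewise for the $v_i$. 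The hypotheses $[u_0]\preceq_{\mathcal{I}}[v_0]$ and $[u_1]\preceq_{\mathcal{I}}[v_1]$, applied via Proposition~\ref{prop:Ilelong}(i), give $\nu(\pi^*u_0,y) \geq \nu(\pi^*v_0,y)$ and $\nu(\pi^*u_1,y) \geq \nu(\pi^*v_1,y)$. Taking the convex combination with weights $1-t$ and $t$ yields the required inequality.

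There is really no obstacle here; the entire content is encoded in Proposition~\ref{prop:Ilelong}, which converts the $\mathcal{I}$-singularity comparison into a pointwise numerical comparison of Lelong numbers on all modifications, where linearity is immediate. The only mild point worth recording is that we are using Lelong numbers on arbitrary smooth modifications, not merely on $X$ itself, but this is precisely the content of the cited proposition and requires no extra work.
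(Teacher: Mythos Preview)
Your proof is correct and is essentially identical to the paper's own argument, which also invokes Proposition~\ref{prop:Ilelong}(i) together with the additivity of Lelong numbers (citing \cite[Corollary~2.10]{Bo17}). You have simply spelled out the details a bit more fully.
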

\begin{proof}
This follows from Proposition~\ref{prop:Ilelong}(i) and the additivity of Lelong numbers \cite[Corollary 2.10]{Bo17}.
\end{proof}

Lastly, we show concavity properties for the envelopes defined above:

\begin{proposition}\label{prop:concmono}
Let $v\in C^0(K)$, $u_0,u_1\in \PSH(X,\theta)$. The following hold:\vspace{0.1cm}\\
\noindent (i) For any $t\in [0,1]$, let $u_t=tu_1+(1-t)u_0$, then
    \begin{flalign}\label{eq:conca}
    tP_{K}[u_1]_{\mathcal{I}}(v)+(1-t)P_{K}[u_0]_{\mathcal{I}}(v) &\leq P_{K}[u_t]_{\mathcal{I}}(v)\,,\\
    tP_{K}[u_1](v)+(1-t)P_{K}[u_0](v) &\leq P_{K}[u_t](v)\,.\nonumber
    \end{flalign}
    
\noindent (ii) Assume that $[u_0]\preceq_{\mathcal{I}} [u_1]$ (resp. $[u_0]\preceq [u_1]$), then  $P_{K}[u_0]_{\mathcal{I}}(v)\leq P_{K}[u_1]_{\mathcal{I}}(v)$ (resp. $P_{K}[u_0](v)\leq P_{K}[u_1](v)$).
\end{proposition}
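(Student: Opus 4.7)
The plan is to attack both inequalities directly at the level of the defining suprema, exploiting that convex combinations behave well with respect to $\preceq$ and $\preceq_{\mathcal I}$. Part (ii) is essentially transitivity: if $[u_0]\preceq_{\mathcal I}[u_1]$, then every $w\in\PSH(X,\theta)$ with $[w]\preceq_{\mathcal I}[u_0]$ automatically satisfies $[w]\preceq_{\mathcal I}[u_1]$, so the admissible family in the sup defining $P_K[u_0]_{\mathcal I}(v)$ is contained in the one for $P_K[u_1]_{\mathcal I}(v)$; monotonicity of the sup and of the usc regularization finishes it. The non-$\mathcal I$ version is identical, using transitivity of $\preceq$.

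For part (i), I would pick arbitrary candidates $w_0,w_1\in\PSH(X,\theta)$ with $[w_i]\preceq_{\mathcal I}[u_i]$ and $w_i|_K\leq v$, and observe that $w_t:=tw_1+(1-t)w_0$ is $\theta$-psh, restricts to $\leq v$ on $K$ by linearity, and satisfies $[w_t]\preceq_{\mathcal I}[u_t]$ by Corollary~\ref{cor:linearIcomp}. Hence $w_t$ itself competes in the sup defining $P_K[u_t]_{\mathcal I}(v)$, giving $tw_1+(1-t)w_0\leq P_K[u_t]_{\mathcal I}(v)$ pointwise on $X$. Taking pointwise suprema, first in $w_1$ and then in $w_0$, and using $\sup(tw+c)=t\sup w+c$, one arrives at
\[
tS_1+(1-t)S_0\leq P_K[u_t]_{\mathcal I}(v),
\]
where $S_i$ denotes the raw pointwise sup in the definition of $P_K[u_i]_{\mathcal I}(v)$. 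The non-$\mathcal I$ variant follows by the same argument, replacing Corollary~\ref{cor:linearIcomp} with the elementary observation that $w_i\leq u_i+C_i$ implies $w_t\leq u_t+tC_1+(1-t)C_0$.

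The only mildly delicate step is to pass from $S_i$ to its usc regularization $P_K[u_i]_{\mathcal I}(v)=\usc(S_i)$. By Choquet's lemma, $S_i$ agrees with $\usc(S_i)$ off a pluripolar set $E_i$, so the inequality above rewrites as
\[
tP_K[u_1]_{\mathcal I}(v)+(1-t)P_K[u_0]_{\mathcal I}(v)\leq P_K[u_t]_{\mathcal I}(v)\quad\text{on }X\setminus(E_0\cup E_1).
\]
The left-hand side is $\theta$-psh (hence usc) as a convex combination of $\theta$-psh functions, and the right-hand side is usc by construction, so a routine $\limsup$ argument across the pluripolar exceptional set extends the inequality to all of $X$. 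The only nontrivial input beyond definitions is the convexity of $\preceq_{\mathcal I}$ supplied by Corollary~\ref{cor:linearIcomp}; I do not anticipate any serious obstacle beyond this.
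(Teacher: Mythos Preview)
Your proof is correct and follows essentially the same approach as the paper: for (ii) you use transitivity of $\preceq_{\mathcal I}$ (resp.\ $\preceq$) to obtain an inclusion of candidate families, and for (i) you take arbitrary candidates $w_0,w_1$, form the convex combination, invoke Corollary~\ref{cor:linearIcomp} (resp.\ the trivial additivity of the $\preceq$ relation), and then pass to the supremum. The paper's proof is the same, only more terse: it writes ``as $h_1$ and $h_0$ are arbitrary candidates, we conclude'' without spelling out the passage from the raw supremum $S_i$ to its usc regularization, which you handle explicitly via the negligibility of $\{S_i<\usc(S_i)\}$ and the fact that both sides are $\theta$-psh (so an inequality valid off a Lebesgue-null set extends everywhere).
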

\begin{proof}The proof of (ii) follows from the  definitions.
To prove (i),  let $h_0,h_1\in \PSH(X,\theta)$, such that $[h_i]\preceq_{\mathcal{I}}[u_i]$, $h_i|_K\leq v$. Then by Corollary~\ref{cor:linearIcomp}, $[th_1+(1-t)h_0]\preceq_{\mathcal{I}}[u_t]$. It is clear that $th_1|_K+(1-t)h_0|_K\leq v$. Hence, $th_1+(1-t)h_0\leq P_{K}[u_t]_{\mathcal{I}}(v)\,.$

As $h_1$ and $h_0$ are arbitrary candidates, we conclude the first inequality in \eqref{eq:conca}. The proof of the second inequality is similar.
\end{proof}

\subsection{The metric topology of singularity types}\label{subsec:metricsingtype}

Let $\mathcal S(X,\theta)$ be the set of singularity types of $\theta$-psh functions: $\mathcal S(X,\theta):= \PSH(X,\theta)/\simeq$. Let $\mathcal{A}(X,\theta)\subseteq \mathcal S(X,\theta)$ be the set of \emph{analytic singularity types}, namely, all singularity types $[u]$ represented by $u \in \PSH(X,\theta)$ such that $u$ is locally of the following form:
\begin{equation}\label{eq: analytic_def}
u = c\log \sum_{i=1}^N|f_i|^2+g
\end{equation}
where $c \in \mathbb  Q^+$, $f_1,\ldots,f_N$ are holomorphic functions, and  $g$ is a  bounded function. When $g$ can be taken to be smooth, then following \cite{Dem18a} we say that $[u]$ is a \emph{neat} analytic singularity type.

In \cite{DDNL5} the authors constructed a  pseudometric $d_{\mathcal{S}}$ on $\mathcal S(X,\theta)$. As we will use the $d_\mathcal S$ topology extensively in this work, we recall here a few basic facts, and refer to \cite{DDNL5} for a more complete picture. 

The definition of $d_\mathcal S$ involves embedding $\mathcal S(X,\theta)$ into the space of $L^1$ geodesic rays \cite[Section~3]{DDNL5}. We do not recall the exact definition, but simply recall that there is a constant $C>0$ depending only on $n$, such that for any $[u],[v]\in \mathcal S(X,\theta)$, we have
\begin{equation}
    d_{\mathcal{S}}([u],[v])\leq \sum_{j=0}^n \left(2\int_X \theta_{V_\theta}^j\wedge\theta_{\max\{u,v\}}^{n-j}-\int_X \theta_{V_\theta}^j\wedge\theta_{u}^{n-j}-\int_X \theta_{V_\theta}^j\wedge\theta_{v}^{n-j} \right)\leq C d_{\mathcal{S}}([u],[v])\,.
\end{equation}
Note that the term in middle is independent of the choices of representatives $u$ and $v$, as a consequence of \cite[Theorem~1.1]{DDNL2}.

\begin{theorem}[{\cite[Theorem~1.1]{DDNL5}}]
For any $\delta>0$, the space 
\[
\mathcal{S}_{\delta}(X,\theta):=\left\{\,[u]\in \mathcal{S}(X,\theta),\int_X \theta_{u}^n\geq \delta \,\right\}
\]
is $d_{\mathcal{S}}$-complete.
\end{theorem}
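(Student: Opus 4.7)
The goal is completeness of $(\mathcal{S}_\delta(X,\theta), d_\mathcal{S})$. Given a $d_\mathcal{S}$-Cauchy sequence $\{[u_k]\}_k \subset \mathcal{S}_\delta(X,\theta)$, I would first pass to a subsequence with $d_\mathcal{S}([u_k],[u_{k+1}]) \leq 2^{-k}$ and choose representatives normalized by $\sup_X u_k = 0$ (harmless since $d_\mathcal{S}$ only sees singularity types). The candidate limit is built as a monotone increasing sequence of rooftop envelopes:
\[
\psi_k := P^\theta\!\bigl(\inf_{j \geq k} u_j\bigr) \in \PSH(X,\theta), \qquad \psi := \bigl(\lim_{k} \psi_k\bigr)^{\!*}.
\]
By construction $\psi_k \leq u_k$ (so $[\psi_k] \preceq [u_k]$), and $\{\psi_k\}_k$ is pointwise increasing; thus $\psi$ is $\theta$-psh provided it is not identically $-\infty$.

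The heart of the argument is the mass bookkeeping. Using the equivalence displayed in the excerpt between $d_\mathcal{S}([u],[v])$ and the mixed-mass deficit $\sum_j \bigl(2\int \theta_{V_\theta}^j \wedge \theta_{\max(u,v)}^{n-j} - \int \theta_{V_\theta}^j \wedge \theta_u^{n-j} - \int \theta_{V_\theta}^j \wedge \theta_v^{n-j}\bigr)$, the Cauchy condition forces each sequence $k \mapsto \int \theta_{V_\theta}^j \wedge \theta_{u_k}^{n-j}$ to be Cauchy in $\mathbb{R}$. A telescoping comparison between $\psi_k$ and the finite rooftops $P^\theta(u_k,\ldots,u_{k+N})$, letting $N \to \infty$, should yield
\[
\int_X \theta_{V_\theta}^j \wedge \theta_{u_k}^{n-j} - \int_X \theta_{V_\theta}^j \wedge \theta_{\psi_k}^{n-j} \leq C \sum_{i \geq k} 2^{-i} = O(2^{-k}).
\]
In particular $\int_X \theta_{\psi_k}^n \geq \delta - O(2^{-k})$, and by monotone continuity of the non-pluripolar Monge--Amp\`ere mass along increasing sequences (\cite{DDNL2}), one deduces $\int_X \theta_\psi^n \geq \delta$; this simultaneously rules out $\psi \equiv -\infty$ and places $[\psi]$ in $\mathcal{S}_\delta$.

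For the convergence $d_\mathcal{S}([u_k],[\psi]) \to 0$, since $\psi_k \leq u_k$ we have $\max(u_k,\psi_k) = u_k$, so the mixed-mass expression collapses to precisely the telescoped deficit above, giving $d_\mathcal{S}([u_k],[\psi_k]) \to 0$. Combined with $d_\mathcal{S}([\psi_k],[\psi]) \to 0$, which follows from the monotone continuity of $d_\mathcal{S}$ along the increasing sequence $\psi_k \nearrow \psi$ (also part of the framework in \cite{DDNL5}, since all the relevant mixed masses $\int \theta_{V_\theta}^j \wedge \theta_{\psi_k}^{n-j}$ converge monotonically to their limits at $\psi$), the triangle inequality finishes the proof.

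\textbf{Main obstacle.} The technical crux is the telescoping mass estimate feeding into Step~3, which ultimately rests on a comparison of the type
\[
\int_X \theta_{V_\theta}^j \wedge \theta_{P^\theta(u,v)}^{n-j} \geq \int_X \theta_{V_\theta}^j \wedge \theta_{v}^{n-j} - C \, d_\mathcal{S}([u],[v])
\]
for $v \leq u$, iterated stably so that infinitely many rooftop operations do not leak mass below the $\delta$ threshold. Proving such an estimate relies on the orthogonality-type identity showing $\theta_{P^\theta(u,v)}^n$ is supported on $\{P^\theta(u,v) = \min(u,v)\}$ combined with the monotonicity results of \cite{WN19} and the mixed-mass identities of \cite{DDNL2}. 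Keeping track of how the Cauchy bound transfers from $d_\mathcal{S}$ to each individual mixed mass, uniformly in $j$, is where the bulk of the work lies.
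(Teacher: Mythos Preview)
The paper does not prove this statement: it is quoted in the preliminaries (Section~\ref{subsec:metricsingtype}) as \cite[Theorem~1.1]{DDNL5}, with no argument given. There is therefore no proof in the paper to compare your proposal against.

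For what it is worth, your outline is the right architecture and matches the strategy of the original proof in \cite{DDNL5}. Two small corrections: your candidate $\psi_k = P^\theta(\inf_{j\geq k} u_j)$ should be written as the decreasing limit $\lim_{N\to\infty} P^\theta(u_k,\ldots,u_{k+N})$ of finite rooftops, since $\inf_{j\geq k} u_j$ need not be usc (this is exactly the construction the present paper cites from \cite[Theorem~5.6]{DDNL5} in the proof of Theorem~\ref{thm:dS_A_closure}). And the telescoping mass estimate you correctly identify as the crux is handled in \cite{DDNL5} not via orthogonality identities directly, but through the embedding of $\mathcal{S}(X,\theta)$ into the space of $L^1$ geodesic rays (alluded to in the paragraph preceding the displayed mass-equivalence in Section~\ref{subsec:metricsingtype}); that machinery does the mixed-mass bookkeeping more cleanly than the route you sketch.
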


We paraphrase {\cite[Lemma~4.3]{DDNL5}}, to make it easily adaptable to our context:

\begin{lemma}[{\cite[Lemma~4.3]{DDNL5}}]\label{lma:exislower}
Let $u,v\in \PSH(X,\theta)$, $[u]\preceq [v]$, $\int_X\theta_{u}^n>0$. For any 
\[
b\in \bigg(1, \left(\frac{\int_X\theta_v^n}{\int_X\theta_v^n-\int_X\theta_u^n}\right)^{1/n} \bigg)
\]
there exists $h\in \PSH(X,\theta)$, such that $h+(b-1)v\leq bu$. This allows to introduce:
\begin{equation}\label{P_strange_def}
P(bu+(1-b)v):=\usc\sup\left\{h\in \PSH(X,\theta): h+(b-1)v\leq bu \right\} \in \textup{PSH}(X,\theta).
\end{equation}
\end{lemma}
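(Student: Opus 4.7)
The plan is to define the envelope
\[
h := \usc \sup\{\, w \in \PSH(X,\theta) \ :\ w + (b-1)v \leq bu \,\}
\]
and to show this is a bona fide $\theta$-psh function (not identically $-\infty$), equivalently, that the candidate set is non-empty. My approach is a reduction to analytic singularity types, followed by a Monge--Ampère mass estimate to prevent degeneration.

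The first step is to reduce to the case where both $u$ and $v$ are model potentials. Since $u \leq P[u]$ and $v \leq P[v]$, any $w$ that is a candidate for the pair $(P[u], P[v])$ is also a candidate for the original $(u,v)$, and the hypothesis on $b$ is unchanged because $\int_X \theta_{P[u]}^n = \int_X \theta_u^n$ and similarly for $v$. After subtracting $\sup_X v$ one may further assume $v \leq 0$.

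Next I would approximate $u$ and $v$ by decreasing sequences $u_j \downarrow u$, $v_j \downarrow v$ of $\theta$-psh potentials with \emph{analytic singularity types}, with convergence of masses $\int_X \theta_{u_j}^n \to \int_X \theta_u^n$ and similarly for $v_j$, so that $b$ lies in the analogous interval for all large $j$. For such analytic $(u_j, v_j)$, a simultaneous log-resolution $\pi_j : Y_j \to X$ renders the singular loci of $u_j$ and $v_j$ simple normal crossing. The combination $b u_j - (b-1)v_j$ then has a well-understood divisorial part (cf.\ Proposition~\ref{prop:Ilelong} on the behaviour of Lelong numbers under pullback), and together with the mass formula $\int_X \theta_{u_j}^n = \vol(\pi_j^*\theta - D_{u_j})$ for analytic singularities, the strict upper bound on $b$ yields bigness of a $\mathbb{Q}$-class on $Y_j$. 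This bigness produces a $\pi_j^*\theta$-psh potential on $Y_j$ descending to $w_j \in \PSH(X,\theta)$ with $w_j + (b-1)v_j \leq b u_j + C_j$.

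Finally one passes to the limit $j \to \infty$: after normalizing $\sup_X w_j = 0$, the family $\{w_j\}$ is $L^1$-precompact, and a subsequential weak limit is a valid candidate for the original pair $(u, v)$. The crux, and the main obstacle, is to prevent the normalizing constants $\sup_X w_j$ from diverging to $-\infty$. This is exactly where the strict inequality
\[
b < \left(\frac{\int_X \theta_v^n}{\int_X \theta_v^n - \int_X \theta_u^n}\right)^{1/n}
\]
enters: observing that $\tilde{w}_j := (w_j + (b-1)v_j)/b \in \PSH(X,\theta)$ is a convex combination satisfying $\tilde w_j \leq u_j + C_j/b$, the monotonicity $\int_X \theta_{\tilde w_j}^n \leq \int_X \theta_{u_j}^n$ combined with the pointwise positivity $b\theta_{\tilde w_j} \geq (b-1)\theta_{v_j}$ yields, via a Brunn--Minkowski type inequality for mixed Monge--Ampère masses, a uniform positive lower bound on $\int_X \theta_{w_j}^n$ as long as $b$ stays strictly below the critical mass ratio. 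This positive mass gap forces $\sup_X w_j$ to remain bounded, completing the argument.
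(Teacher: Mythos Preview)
The paper does not give its own proof of this lemma; it is quoted from \cite[Lemma~4.3]{DDNL5}, where the argument is direct and does not pass through analytic approximation. Your strategy is different in spirit, and as written it has two genuine gaps.

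First, the closing step is incorrect. You assert that a uniform positive lower bound on $\int_X \theta_{w_j}^n$ forces $\sup_X w_j$ to stay bounded. This is false: the non-pluripolar mass depends only on the singularity type, not on the additive constant, so for instance $V_\theta - j$ has constant positive mass while $\sup_X(V_\theta - j) \to -\infty$. Moreover, the Brunn--Minkowski inequality you invoke points the wrong way: from $b\,\theta_{\tilde w_j} = \theta_{w_j} + (b-1)\theta_{v_j}$ and $\int_X \theta_{\tilde w_j}^n \leq \int_X \theta_{u_j}^n$ one extracts at best an \emph{upper} bound
\[
\Big(\int_X \theta_{w_j}^n\Big)^{1/n} \leq b\Big(\int_X \theta_{u_j}^n\Big)^{1/n} - (b-1)\Big(\int_X \theta_{v_j}^n\Big)^{1/n},
\]
not a lower bound. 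So the mechanism you propose for preventing degeneration does not work.

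Second, the approximation step loses the hypothesis $[u] \preceq [v]$. Approximating $u$ and $v$ \emph{independently} by analytic $u_j, v_j$ gives no reason for $[u_j] \preceq [v_j]$ to hold, and without the corresponding divisor inequality $D_{u_j} \geq D_{v_j}$ on a common resolution, the combination $bD_{u_j} - (b-1)D_{v_j}$ need not be effective and your bigness claim has no starting point. Relatedly, Demailly-type approximation gives mass convergence to $\int_X \theta_{P[u]_{\mathcal I}}^n$ (cf.\ Corollary~\ref{cor: measureconv}), which can strictly exceed $\int_X \theta_u^n$, so even the range condition on $b$ need not transfer to the approximants.
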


To clarify, when $\int_X\theta_v^n=\int_X\theta_u^n$ the condition on $b$ in the above result is $b \in (1,\infty)$. In addition,  by \eqref{P_strange_def}, we have that $P(bu+(1-b)v) + (b-1)v \leq u$ a.e. on $X$, hence this inequality holds globally, since both the left and right-hand side are quasi-psh functions.

Next, we prove continuity results for the envelopes defined above.

\begin{proposition}\label{prop: conv_of_K_env} 
    Let $K \subseteq X$ be a compact and non-pluripolar subset.
Let $v\in C^0(K)$.
Let $u_j,u \in \PSH(X,\theta)$ such that $d_\mathcal S([u_j],[u]) \to 0$ and $\int_X \theta_u^n >0$. Then the following hold:
\begin{itemize}
    \item[\textup{(i)}] if $u_j \searrow u$ then $P_{K}[u_j]_{\mathcal{I}}(v) \searrow P_{K}[u]_{\mathcal{I}}(v)$ and $P_K[u_j](v) \searrow P_K[u](v)$.
    \item[\textup{(ii)}] if $u_j \nearrow u$ then $P_{K}[u_j]_{\mathcal{I}}(v) \nearrow P_{K}[u]_{\mathcal{I}}(v)$ a.e. and $P_K[u_j](v) \nearrow P_K[u](v)$ a.e..
\end{itemize}
\end{proposition}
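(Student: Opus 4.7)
Both parts reduce to monotone-limit statements about the envelopes. Proposition~\ref{prop:concmono}(ii) immediately yields monotonicity of the sequence $P_K[u_j]_\mathcal{I}(v)$—decreasing in case (i), increasing in case (ii)—together with the ``easy'' one-sided comparison against $P_K[u]_\mathcal{I}(v)$. The content of the proposition is the reverse inequality at the limit. I describe the argument for the $\mathcal{I}$-variant; the ordinary variant is obtained by replacing $\preceq_\mathcal{I}$ by $\preceq$ throughout.

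\textbf{Case (i).} Set $R_j := P_K[u_j]_\mathcal{I}(v)$ and $R := \lim_j R_j \in \PSH(X,\theta)$, so $R\geq P_K[u]_\mathcal{I}(v)$. The reverse inequality follows if I exhibit $R$ as a candidate in the sup defining $P_K[u]_\mathcal{I}(v)$. The boundary constraint $R|_K\leq v$ passes to the decreasing limit outside a pluripolar set. For the algebraic condition $[R]_\mathcal{I}\preceq [u]_\mathcal{I}$, Proposition~\ref{prop:Ilelong} reduces the task to Lelong-number inequalities $\nu(\pi^* R,y)\geq \nu(\pi^* u,y)$ on every smooth modification $\pi: Y\to X$. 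From $R\leq R_j$ and $[R_j]_\mathcal{I}\preceq [u_j]_\mathcal{I}$ I immediately have $\nu(\pi^* R,y)\geq \nu(\pi^* u_j,y)$; the upgrade in the limit $j\to\infty$ is driven by the $d_\mathcal{S}$-hypothesis. Since $u_j\searrow u$ gives $\max(u_j,u)=u_j$, the estimate for $d_\mathcal{S}$ recalled earlier forces convergence of mixed masses $\int_X \theta_{V_\theta}^k\wedge \theta_{u_j}^{n-k}\to \int_X \theta_{V_\theta}^k\wedge \theta_u^{n-k}$, which in turn yields the required Lelong-number compatibility at the limit.

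\textbf{Case (ii).} Set $T_j := P_K[u_j]_\mathcal{I}(v)$ and $T := \usc\bigl(\lim_j T_j\bigr) \in \PSH(X,\theta)$, so $T\leq P_K[u]_\mathcal{I}(v)$. The reverse inequality amounts to showing that every candidate $w$ for $P_K[u]_\mathcal{I}(v)$ satisfies $w\leq T$ almost everywhere. The key obstacle here is that $[w]_\mathcal{I}\preceq [u]_\mathcal{I}$ does \emph{not} entail $[w]_\mathcal{I}\preceq [u_j]_\mathcal{I}$, so $w$ is not automatically a candidate for $T_j$. Lemma~\ref{lma:exislower} applied to the pair $(u_j,u)$ provides the remedy: the mass convergence $\int_X\theta_{u_j}^n\to \int_X\theta_u^n>0$ (a consequence of $d_\mathcal{S}$-convergence) makes any fixed $b>1$ admissible once $j$ is large, producing $h_j\in \PSH(X,\theta)$ with $h_j+(b-1)u\leq bu_j$. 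Pulling back to modifications and using additivity of Lelong numbers one obtains $\nu(\pi^*h_j,y)\geq \nu(\pi^* u_j,y)$, hence $[h_j]_\mathcal{I}\preceq [u_j]_\mathcal{I}$ by Proposition~\ref{prop:Ilelong}, so $h_j$ is itself a candidate for $T_j$. A rooftop or convex-combination construction mixing $w$ with $h_j$, adjusted by a constant to preserve the boundary constraint on $K$, then produces candidates for $T_j$ that recover $w$ as $j\to\infty$ and $b\to 1$; hence $w\leq T$ a.e.

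\textbf{Main obstacle.} The core difficulty in both parts is transferring $\mathcal{I}$-singularity comparisons across a monotone limit; pointwise monotonicity alone is not enough. Case (i) requires upgrading the Lelong-number dominance of $R$ over $u_j$ to dominance over $u$, while case (ii) requires an explicit construction of candidates for the more-singular envelope $T_j$ from candidates for the less-singular envelope $P_K[u]_\mathcal{I}(v)$. The mass-convergence consequence of $d_\mathcal{S}$-convergence is the decisive analytic input in both cases, and Lemma~\ref{lma:exislower} supplies the quantitative rescaling mechanism in case (ii).
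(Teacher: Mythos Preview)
Your approach to case~(ii) is essentially the paper's: use Lemma~\ref{lma:exislower} to produce $h_j$ with $(1-\alpha_j)u + \alpha_j h_j \leq u_j$, then exploit concavity (Proposition~\ref{prop:concmono}) to push $P_K[u]_\mathcal{I}(v)$ under $P_K[u_j]_\mathcal{I}(v)$ up to an error that vanishes as $\alpha_j\to 0$. The paper applies concavity directly to the envelopes rather than to an arbitrary candidate $w$, which is slightly cleaner, but the idea is the same.

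Case~(i), however, has a real gap. You want to show the decreasing limit $R$ satisfies $[R]\preceq_\mathcal{I}[u]$ by upgrading $\nu(\pi^*R,y)\geq \nu(\pi^*u_j,y)$ to $\nu(\pi^*R,y)\geq \nu(\pi^*u,y)$, and you assert that the mass convergence forced by $d_\mathcal{S}([u_j],[u])\to 0$ ``yields the required Lelong-number compatibility at the limit.'' This step is not justified and is in fact false in general: for a decreasing sequence $u_j\searrow u$ one only has $\nu(\pi^*u_j,y)\leq \nu(\pi^*u,y)$, and the limit of the left side can be strictly smaller than the right side even when all mixed masses converge (Demailly's quasi-equisingular approximation is exactly an instance of this phenomenon). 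Convergence of the quantities $\int_X \theta_{V_\theta}^k\wedge\theta_{u_j}^{n-k}$ says nothing about individual Lelong numbers on arbitrary modifications.

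The paper avoids this obstacle by treating~(i) \emph{symmetrically} with~(ii): apply Lemma~\ref{lma:exislower} to the pair $u\leq u_j$ (now with the roles reversed) to obtain $\alpha_j\searrow 0$ and $h_j\in\PSH(X,\theta)$ with $(1-\alpha_j)u_j+\alpha_j h_j\leq u$. Concavity then gives
\[
(1-\alpha_j)P_K[u_j]_\mathcal{I}(v)+\alpha_j P_K[h_j]_\mathcal{I}(v)\leq P_K[u]_\mathcal{I}(v),
\]
and since $\sup_X P_K[h_j]_\mathcal{I}(v)$ is uniformly bounded (because $K$ is non-pluripolar and $v$ is bounded), letting $j\to\infty$ yields $R\leq P_K[u]_\mathcal{I}(v)$. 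No Lelong-number argument is needed.
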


The argument is very similar to that of \cite[Lemma~2.21]{DX22}.
\begin{proof}We first prove (i). Since $\int_X \theta_{u_j}^n \searrow \int_X \theta^n_u>0$ \cite[Proposition~4.8]{DDNL5}, by Lemma \ref{lma:exislower}, there exists $\alpha_j \searrow 0$ and $h_j := P(\frac{1}{\alpha_j}u + (1 - \frac{1}{\alpha_j}) u_j) \in \PSH(X,\theta)$ satisfying $(1-\alpha_j)u_j + \alpha_j h_j \leq u$. By Proposition~\ref{prop:concmono},
\[
(1-\alpha_j)P_{K}[u_j]_{\mathcal{I}}(v) + \alpha_j P_{K}[h_j]_{\mathcal{I}}(v) \leq P_{K}[(1-\alpha_j)u_j + \alpha_j h_j]_{\mathcal{I}}(v) \leq P_{K}[u]_{\mathcal{I}}(v)\,.
\]
Since $\{u_j\}_j$ is decreasing, so is $\{P_K[u_j]_{\mathcal{I}}(v)\}_j$, hence $w:= \lim_j P_K[u_j]_{\mathcal{I}}(v) \geq P[u]_{\mathcal{I}}(v)$ exists. Since $\alpha_j \to 0$ and $\sup_X P_K[h_j]_{\mathcal{I}}(v)$ is bounded, we can let $j \to \infty$ in the above estimate to conclude that $w = P_K[u]_{\mathcal{I}}(v)$. The same ideas yield that $P_K[u_j](v) \searrow P_K[u](v)$.

Proving (ii) is similar. Since $\int_X \omega_{u_j}^n \nearrow \int_X \omega^n_u>0$ \cite[Theorem~2.3]{DDNL2}, by \cite[Lemma~4.3]{DDNL5} there exists $\alpha_j \searrow 0$ and $h_j := P(\frac{1}{\alpha_j}u_j + (1 - \frac{1}{\alpha_j}) u) \in \PSH(X,\theta)$ satisfying $(1-\alpha_j)u + \alpha_j h_j \leq u_j$. By Proposition~\ref{prop:concmono},
\[
(1-\alpha_j)P_{K}[u]_{\mathcal{I}}(v) + \alpha_j P_{K}[h_j]_{\mathcal{I}}(v)  \leq  P_{K}[(1-\alpha_j) u + \alpha_j h_j]_{\mathcal{I}}(v) \leq P_{K}[u_j]_{\mathcal{I}}(v)\,.
\]
Since $\{u_j\}_j$ is increasing, so is $\{P_{K}[u_j]_{\mathcal{I}}(v)\}_j$, hence $w:= \usc\lim_j P_{K}[u_j]_{\mathcal{I}}(v) \leq P_{K}[u]_{\mathcal{I}}(v)$ exists. Since $\alpha_j \to 0$ and $\sup_X P_{K}[h_j]_{\mathcal{I}}(v)$ is bounded, we can let $j \to \infty$ in the above estimate to conclude that $w = P_{K}[u]_{\mathcal{I}}(v)$. The same proof yields that $P_K[u_j](v) \nearrow P_K[u](v)$ a.e..
\end{proof}

\subsection{An approximation result of Demailly}
Let $X$ be a compact Kähler manifold of dimension $n$. Let $\theta$ be a smooth representative of a pseudoeffective $(1,1)$-class on $X$. Let $\omega$ be a Kähler form on $X$.

Following the terminology of \cite[Defintion 2.3]{Cao14}, we recall the existence of quasi-equisingular approximation for potentials in $\PSH(X,\theta)$. As elaborated below, this result is implicit in the proof of \cite[Theorem~2.2.1]{DPS01},\cite[Theorem 3.2]{DP04} and \cite[Theorem~1.6]{Dem15}.

\begin{theorem}\label{thm:Demailly} Let $u \in \PSH(X,\theta)$.  Then there
exists $u_k^D \in \PSH(X,\theta + \varepsilon_k \omega)$ with $\varepsilon_k \searrow 0$ such that
\begin{itemize}
\item[\textup{(i)}] $u^D_k \searrow u$; 
\item[\textup{(ii)}] $[u_k^D]\in \mathcal{A}(X,\theta+\varepsilon_k\omega)$;
\item[\textup{(iii)}] $\mathcal I( \frac{s{2^k}}{2^k-s} u_k^D) \subseteq \mathcal I(s u) \subseteq \mathcal I(s u_k^D)$ for all $s >0$.
\end{itemize}
\end{theorem}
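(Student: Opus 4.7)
The plan is to invoke Demailly's Bergman-kernel approximation procedure essentially verbatim, with a dyadic rearrangement at the end to enforce monotonicity. Fix a smooth Hermitian metric on an auxiliary ample line bundle $(A, h_A)$ on $X$. For each integer $m \geq 1$, let $(s_{m,j})_j$ be an orthonormal basis of the finite-dimensional Hilbert space $H^0(X, L^m \otimes A \otimes \mathcal{I}(mu))$ with $L^2$-norm weighted by $h_L^m \otimes h_A \cdot e^{-mu}$ against a smooth volume form (where $h_L$ is a fixed smooth metric on $L$ with curvature $\theta$). Define the Bergman approximant
$$\varphi_m := \frac{1}{2m} \log \sum_j |s_{m,j}|^2_{h_L^m \otimes h_A}.$$
A standard curvature computation gives $\varphi_m \in \PSH(X, \theta + \eta_m \omega)$ with $\eta_m = O(1/m)$, and $[\varphi_m]$ is an analytic singularity type by its very form, verifying (ii) up to a reindexing.

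I would next prove two-sided pointwise estimates. Ohsawa--Takegoshi extension, applied to point evaluations, yields the lower bound $\varphi_m \geq u - C/m$. The submean-value inequality on balls of radius $\sim 1/m$ gives $\varphi_m(x) \leq \sup_{B(x,1/m)} u + C(\log m)/m$. From these pointwise bounds, together with subadditivity of multiplier ideals and the strong openness theorem, one extracts the inclusions $\mathcal I(s u) \subseteq \mathcal I(s \varphi_m)$ and $\mathcal I\bigl(\tfrac{sm}{m - s}\, \varphi_m\bigr) \subseteq \mathcal I(s u)$ for $0 < s < m$; the loss factor $\tfrac{m}{m - s}$ records the subadditivity budget spent to swap $\varphi_m$ for $u$ (this is precisely Cao's quasi-equisingular formalism in \cite[Definition~2.3]{Cao14}, in the form established in \cite[Theorem~1.6]{Dem15}). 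Specializing to $m = 2^k$ reproduces the factor $\tfrac{s \cdot 2^k}{2^k - s}$ of condition (iii).

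To enforce the decreasing property (i), the approximants $(\varphi_m)_m$ themselves need not be monotone, so I pass to the dyadic rearrangement
$$u_k^D := \usc \Bigl( \sup_{\ell \geq k} \bigl( \varphi_{2^\ell} + C (\log 2^\ell)/2^\ell \bigr) \Bigr),$$
where the additive corrections are the sharp upper-bound errors from the previous step. By construction $u_k^D \in \PSH(X, \theta + \varepsilon_k \omega)$ with $\varepsilon_k := \eta_{2^k}$, is decreasing in $k$ (the family of admissible indices shrinks), and is sandwiched between $\varphi_{2^k}$ (from below, as $\ell = k$ is an admissible term) and $u + 2C(\log 2^k)/2^k$ (from above, by the upper bound applied termwise). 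Combined with $\varphi_{2^k} \geq u - C/2^k$, this sandwich forces $u_k^D \searrow u$ as $k \to \infty$.

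The main obstacle, and the place where I expect to work hardest, is verifying that $[u_k^D] \in \mathcal A(X, \theta + \varepsilon_k \omega)$ survives the sup operation; a supremum of analytic-singularity potentials is in general not analytic. The sandwich of the previous paragraph shows that $0 \leq u_k^D - \varphi_{2^k} \leq C(\log 2^k)/2^k$ pointwise, so locally $u_k^D = \varphi_{2^k} + g_k$ for a bounded function $g_k$, and substituting the analytic local form of $\varphi_{2^k}$ produces the representation required by \eqref{eq: analytic_def}. Making this reduction fully compatible with the exact multiplier-ideal factor $\tfrac{s \cdot 2^k}{2^k - s}$ in (iii), cross-checking against the Lelong-number criterion of Proposition~\ref{prop:Ilelong} so that the inclusion is not degraded by the sup-regularization, is the delicate bookkeeping that constitutes the technical heart of the argument as executed in \cite{DPS01, DP04, Dem15}.
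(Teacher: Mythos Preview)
The paper's own proof is a two-line citation: parts (i) and (ii) are taken directly from \cite[Theorem~1.6]{Dem15}, the second inclusion in (iii) is immediate from $u \leq u_k^D$, and the first inclusion in (iii) is \cite[Corollary~1.12]{Dem15}. You are attempting something more ambitious, namely to reconstruct Demailly's argument itself, and the broad strokes (Bergman kernel approximants, Ohsawa--Takegoshi for the lower bound, submean-value for the upper bound, subadditivity for the multiplier-ideal comparison) are correct.

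However, your monotonicity step has a genuine gap. The submean-value estimate gives $\varphi_{2^\ell}(x) \leq \sup_{B(x,1/2^\ell)} u + C(\log 2^\ell)/2^\ell$, and the right-hand side involves $\sup_{B(x,1/2^\ell)} u$, not $u(x)$. Consequently your claimed pointwise bound $u_k^D \leq u + 2C(\log 2^k)/2^k$ is false in general, and with it the sandwich $0 \leq u_k^D - \varphi_{2^k} \leq C(\log 2^k)/2^k$ on which both (i) and (ii) rest in your scheme. A sup of analytic-singularity potentials over an infinite index set need not have analytic singularities, and you have not established that $u_k^D$ differs from $\varphi_{2^k}$ by a bounded function.

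Demailly's actual mechanism for monotonicity is different and does not use a sup. The subadditivity theorem for multiplier ideals yields an inequality of the form $\varphi_{m_1+m_2} \leq \tfrac{m_1}{m_1+m_2}\varphi_{m_1} + \tfrac{m_2}{m_1+m_2}\varphi_{m_2} + O(1/(m_1+m_2))$, whence $\varphi_{2^{k+1}} \leq \varphi_{2^k} + C/2^k$. Adding a summable tail of constants $\sum_{j \geq k} C/2^j$ to $\varphi_{2^k}$ then produces a genuinely decreasing sequence that differs from $\varphi_{2^k}$ by a bounded (indeed constant) function, so analyticity of the singularity type is automatic. This is the ``delicate bookkeeping'' you allude to, but it is a different construction, not a refinement of yours.
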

\begin{proof}
Parts (i) and (ii) follow from \cite[Theorem~1.6]{Dem15}. The second inclusion of (iii) follows from $u \leq u^D_k$, whereas the first inclusion of (iii) follows from \cite[Corollary~1.12]{Dem15}.
\end{proof}

As pointed out in \cite[p. 135, formula (13.14)]{Dem12} (or \cite[Theorem 3.2(iv)]{DP04}), for each $u^D_k$ in the above theorem, there exists a holomorphic  modification $\pi_k: Y_k \to X$, a smooth closed $(1,1)$-form $\beta_k$, and a $\mathbb Q$-divisor $D_k$ with snc singularities on $Y$, such that 
\begin{equation}\label{eq: neat_sing}
\theta_{u^D_k} = [D_k] + \beta_k.
\end{equation}
In particular, $u^D_k \circ \pi_k$ has neat analytic singularity type (recall \eqref{eq: analytic_def}).

In case the pseudoeffective class is induced by a line bundle, we have a related approximation result:

\begin{remark}\label{rem: Bergman_approx} In case $(L,h) \to X$ is a Hermitian line bundle with $c_1(L,h)= \{\theta\}$, $(T,h_T) \to X$ is an arbitrary Hermitian line bundle, and $\theta_u$ is a Kähler current with $[u] \in \mathcal A(X,\theta)$, it is possible to work with the following alternative approximating sequence:
\begin{equation}\label{eq: Bergman_seq_def}
\tilde u^D_k = \frac{1}{k} \log \sup_{\substack{s \in H^0(X,L^k \otimes T)\\ \int_X h^k \otimes h_T(s,s)e^{-ku} \omega^n \leq 1}} h^k  \otimes h_T(s,s)\,.
\end{equation}
For $k$ big enough, this sequence will satisfy $\tilde u^D_k + \frac{C\log k}{k}\geq u$, by the Ohsawa--Takegoshi theorem. However, it is not monotone in general. On the other hand, a stronger form of condition (iii) will hold in this case: $[u] \preceq [\tilde u_k^D] \preceq [\alpha_k u]$ for some $\alpha_k \nearrow 1$. 
\end{remark}

\begin{proof} This is a known consequence of the Briancon--Skoda theorem \cite{Dem12}, but as a courtesy to the reader we give a brief argument for the estimate $[\tilde u_k^D] \leq [\alpha_k u]$, the only part that needs to be proved. As we point out now, this actually follows from the arguments of \cite[Remark~5.9]{Dem12}. 

Let $\mathcal J$ be the coherent sheaf of holomorphic functions $g$ satisfying $|g| \leq D e^{\frac{u}{2c}}$ with $c \in \mathbb Q^+$, as in \eqref{eq: analytic_def} and $D >0$, some positive constant. As pointed out in \cite[Remark~5.9]{Dem12}, we may assume that the $f_j$'s in \eqref{eq: analytic_def} are local generators of $\mathcal J$.

Let $\pi: Y \to X$ be a smooth modification such that $\pi^{-1} \mathcal J \cdot \mathcal{O}_Y = \mathcal O(-D)$, where $D = \sum_j \lambda _j D_j$ is a normal crossing divisor on $Y$. The existence of such $\pi$ follows from Hironaka desingularization. 

Now suppose  that $s \in H^0(X,L^k \otimes T)$ satisfies $\int_X h^k \otimes h_T(s,s)e^{-ku} \omega^n \leq 1$. By pulling back we obtain
\[
    \int_Y h^k \otimes h_T(s\circ \pi,s \circ \pi)e^{-ku\circ \pi} (\pi^* \omega)^n \leq 1\,.
\]
As $u \circ \pi  \simeq c \sum_j \lambda_j \log g_j$ for some local generators $g_j$ of $\mathcal O(-D_j)$, Fubini's theorem gives that $h^k \otimes h_T(s\circ \pi,s \circ \pi)$ vanishes to order at least $\lfloor k c \lambda_j \rfloor + d$ along $D_j$, where $d$ is an absolute constant, only dependent on $\pi.$ In particular, one can find $\alpha_k \nearrow 1$ such that $h^k \otimes h_T(s\circ \pi,s \circ \pi)$ vanishes to order at least $c \alpha_k   k \lambda_j$ along $D_j$. Since  $u \circ \pi  \simeq c \sum_j \lambda_j \log g_j$, we obtain that $[\frac{1}{k} \log h^k \otimes h_T(s \circ \pi, s \circ \pi)] \preceq [\alpha_k u \circ \pi]$, which in turn gives $[\tilde u^D_k \circ \pi ] \preceq [\alpha_k u \circ \pi]$, since $H^0(X,L^k \otimes T \otimes \mathcal I(ku))$ is finite dimensional. By pushing forward, we obtain that $[\tilde u^D_k ] \preceq [\alpha_k u]$, as desired.
\end{proof}

\section{The closure of analytic singularity types in \texorpdfstring{$\mathcal S(X,\theta)$}{SX0}}\label{sec:closanasing}

In this section we only assume that $\theta$ be a smooth representative of a big $(1,1)$-cohomology class on $X$. Our goal is to prove that the $d_\mathcal S$-closure of $\mathcal A(X,\theta)$ is the space of $\mathcal I$-model singularity types, in the presence of positive mass. 
We start with an elementary lemma:
\begin{lemma} \label{lem:pullback_PI} Let $\pi: X' \to X$  be a smooth modification and $u \in \PSH(X,\theta)$. Then we have 
\[
\pi^*P^\theta[u]_{\mathcal{I}}= P^{\pi^*\theta}[\pi^*u]_{\mathcal{I}}\,.
\]
\end{lemma}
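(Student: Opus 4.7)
The plan is to prove the two inequalities $\pi^*P^\theta[u]_{\mathcal I}\leq P^{\pi^*\theta}[\pi^*u]_{\mathcal I}$ and $P^{\pi^*\theta}[\pi^*u]_{\mathcal I}\leq \pi^*P^\theta[u]_{\mathcal I}$ separately, in both directions reducing the $\mathcal I$-singularity comparison to a pointwise Lelong number inequality via Proposition~\ref{prop:Ilelong}.

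For the first inequality, I would set $w:=P^\theta[u]_{\mathcal I}$ and show that $\pi^*w$ is an admissible candidate in the supremum defining the right-hand side. Being $\pi^*\theta$-psh and bounded above by $0$ is immediate, so only $[\pi^*w]\preceq_{\mathcal I}[\pi^*u]$ has content. Given any smooth modification $\sigma:Y\to X'$, the composition $\pi\circ\sigma:Y\to X$ is itself a smooth modification, so Proposition~\ref{prop:Ilelong} applied to $[w]\preceq_{\mathcal I}[u]$ yields $\nu(\sigma^*\pi^*w,y)\geq\nu(\sigma^*\pi^*u,y)$ for every $y\in Y$, which is what is needed.

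For the reverse inequality I would push $w':=P^{\pi^*\theta}[\pi^*u]_{\mathcal I}$ down to $X$. Let $U\subseteq X$ be the dense open locus where $\pi$ is an isomorphism; then $w'\circ\pi^{-1}$ is $\theta$-psh on $U$ and bounded above by $0$, so the classical extension theorem for quasi-psh functions across the proper analytic subset $X\setminus U$ produces a unique $v\in\PSH(X,\theta)$ with $v\leq 0$ whose restriction to $U$ agrees with $w'\circ\pi^{-1}$. The pullback $\pi^*v$ and $w'$ are both $\pi^*\theta$-psh functions on $X'$ coinciding on the dense open $\pi^{-1}(U)$, so they agree everywhere. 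It remains to verify $[v]\preceq_{\mathcal I}[u]$: this makes $v$ an admissible candidate for $P^\theta[u]_{\mathcal I}$, so $w'=\pi^*v\leq\pi^*P^\theta[u]_{\mathcal I}$.

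The main obstacle is this last verification. Given a smooth modification $\tau:Z\to X$ and $z\in Z$, I must produce $\nu(\tau^*v,z)\geq\nu(\tau^*u,z)$. To this end I would resolve singularities of the fibered product $Z\times_X X'$ to obtain a smooth $Z'$ equipped with modifications $\tau':Z'\to Z$ and $\pi':Z'\to X'$ satisfying $\pi\circ\pi'=\tau\circ\tau'$. Applying the hypothesis $[w']\preceq_{\mathcal I}[\pi^*u]$ to $\pi'$, together with $\pi^*v=w'$, yields $\nu((\tau')^*\tau^*v,z')\geq\nu((\tau')^*\tau^*u,z')$ at every $z'\in Z'$. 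To descend this inequality to a given $z\in Z$ I would blow up $z$ to obtain $b:\widetilde{Z}\to Z$ with exceptional divisor $E$, use that $\nu(\tau^*\varphi,z)$ equals the generic Lelong number of $b^*\tau^*\varphi$ along $E$ for any quasi-psh $\varphi$, and then re-resolve $\widetilde{Z}\times_X X'$; the strict transform of $E$ in the new resolution maps birationally to $E$, so generic Lelong numbers of the further pullbacks of $\tau^*v$ and $\tau^*u$ along that strict transform agree with those along $E$. The divisorial content of the resolution-level inequality thereby transfers intact and produces the required pointwise inequality at $z$, completing the proof.
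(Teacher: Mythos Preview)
Your argument is correct. The first inequality is handled essentially as in the paper. For the reverse inequality, however, the paper takes a shorter route: after producing $h\in\PSH(X,\theta)$ with $\pi^*h=P^{\pi^*\theta}[\pi^*u]_{\mathcal I}$, it verifies $\mathcal I(kh)\subseteq\mathcal I(ku)$ directly from Demailly's push-forward formula $\mathcal I(k\varphi)=\pi_*\bigl(K_{X'/X}\otimes\mathcal I(k\pi^*\varphi)\bigr)$, using only that $\mathcal I(k\pi^*h)=\mathcal I(k\pi^*u)$. This avoids Proposition~\ref{prop:Ilelong} entirely on this side and finishes in two lines.

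Your approach instead stays with the Lelong-number characterisation throughout and has to descend the inequality $\nu((\tau')^*\tau^*v,z')\geq\nu((\tau')^*\tau^*u,z')$ from a resolution $Z'$ of $Z\times_X X'$ back to an arbitrary point $z\in Z$. The blow-up trick you describe does work: since $\widetilde Z$ is smooth, the exceptional locus of any modification $W\to\widetilde Z$ has codimension at least two, so the exceptional divisor $E$ over $z$ is not swallowed by it and its strict transform in $W$ maps birationally to $E$; generic Lelong numbers along $E$ and its strict transform then agree, and the pointwise inequality on $W$ (coming from $[\pi^*v]\preceq_{\mathcal I}[\pi^*u]$ applied to the modification $W\to X'$) gives what you need. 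In effect you are invoking the birational invariance of divisorial valuations. This is a perfectly valid proof, but it costs you a second resolution and some care about strict transforms, whereas the paper's use of the multiplier-ideal push-forward formula is a one-step computation.
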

\begin{proof}
Recall that 
\begin{equation}\label{eq:inproofPu}
P^{\theta}[u]_{\mathcal{I}}=\sup\left\{\,v\in \PSH(X,\theta): v\leq 0\,,[v]\preceq_{\mathcal{I}} [u] \,\right\}\,.
\end{equation}
Let $v\in \PSH(X,\theta)$ be a candidate of the $\sup$ in \eqref{eq:inproofPu}. Then by Proposition~\ref{prop:Ilelong}, for any smooth modification $p:Y\rightarrow X$ and any $y\in Y$, $\nu(p^*v,y)\geq \nu(p^*u,y)$. In particular, for any smooth modification $q:Z\rightarrow X'$ and any $z\in Z$, we have $\nu(q^*\pi^*v,z)\geq \nu(q^*\pi^*u,z)$. By Proposition~\ref{prop:Ilelong} again, $[\pi^*v]\preceq_{\mathcal{I}} [\pi^*u]$. In particular, $\pi^*v\leq P^{\pi^*\theta}[\pi^*u]_{\mathcal{I}}$. We arrive at the inequality
\[
\pi^* (P^{\theta}[u]_{\mathcal{I}})\leq P^{\pi^*\theta}[\pi^*u]_{\mathcal{I}}\,.
\]

It remains to prove the reverse inequality.
There is a unique $h\in \PSH(X,\theta)$, such that $\pi^*h=P^{\theta}[\pi^*u]_{\mathcal{I}}$. We need to prove that $P^{\theta}[u]_{\mathcal{I}}\geq h$. 
It suffices to prove the following claim: for any $k>0$, $\mathcal{I}(ku)\supseteq \mathcal{I}(kh)$. But we already know that  $\mathcal{I}(k\pi^*u)= \mathcal{I}(k\pi^*h)$, while by \cite[Proposition~5.8]{Dem12}, 
\[
\mathcal{I}(ku)=\pi_*\left(K_{X'/X}\otimes \mathcal{I}(k\pi^*u)\right)\,,\quad \mathcal{I}(kh)=\pi_*\left(K_{X'/X}\otimes \mathcal{I}(k\pi^*h)\right) \,.
\]
Hence, we conclude that $\mathcal{I}(ku)=\mathcal{I}(kh)$.
\end{proof}

\begin{lemma}\label{lem:algebraic_PI} If $u \in \PSH(X,\theta)$ satisfies $[u] \in \mathcal A(X,\theta)$, then $[u] = [P[u]] = [P[u]_{\mathcal{I}}]$.
\end{lemma}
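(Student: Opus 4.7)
The plan is to split the claim into the trivial direction and the non-trivial one, reducing the latter to a comparison on a log resolution. The trivial direction is immediate: $u - \sup_X u$ lies in the defining supremum of both $P[u]$ and $P[u]_{\mathcal I}$, and the inclusion $\preceq \Rightarrow \preceq_{\mathcal I}$ gives $P[u] \leq P[u]_{\mathcal I}$. Hence $u - \sup_X u \leq P[u] \leq P[u]_{\mathcal I}$, and the chain $[u] \preceq [P[u]] \preceq [P[u]_{\mathcal I}]$ is automatic. It therefore suffices to prove the reverse bound $[P[u]_{\mathcal I}] \preceq [u]$, which via the chain will also give $[P[u]] \preceq [u]$.

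For the reverse bound, I would choose a smooth modification $\pi\colon X' \to X$ principalizing the coherent ideal sheaf defining the analytic type of $u$, so that $\pi^* u$ has neat analytic singularities along an snc divisor $D = \sum_j \lambda_j D_j$, locally of the form $c \sum_i \lambda_{j_i} \log|z_i|^2 + (\text{smooth})$ in coordinates adapted to $D$. By Lemma \ref{lem:pullback_PI}, $\pi^* P[u]_{\mathcal I} = P^{\pi^* \theta}[\pi^* u]_{\mathcal I} =: \varphi$, so proving $P[u]_{\mathcal I} \leq u + C$ on $X$ is equivalent to proving $\varphi \leq \pi^* u + C$ on $X'$.

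To obtain the latter, I would note that the envelope definition yields $[\varphi] \preceq_{\mathcal I} [\pi^* u]$, while applying the trivial candidate inequality on $X'$ gives $\pi^* u - \sup_X u \leq \varphi$, hence $[\pi^* u] \preceq [\varphi]$. Combining, $[\varphi] \simeq_{\mathcal I} [\pi^* u]$, so Proposition \ref{prop:Ilelong} forces pointwise equality $\nu(\varphi, y) = \nu(\pi^* u, y)$ at every $y \in X'$. Working in local coordinates adapted to the snc divisor, one then subtracts off the divisorial part of $\varphi$ one component at a time using Siu's lemma, each subtraction preserving plurisubharmonicity because the matched Lelong numbers bound the generic Lelong number of the running residue along the next component. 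The final residue $\varphi - c \sum_i \lambda_{j_i} \log|z_i|^2$ is plurisubharmonic on the local polydisk, hence locally bounded above, and compactness of $X'$ promotes this to a uniform global bound $\varphi - \pi^* u \leq C$.

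The main obstacle will be the iterated Siu argument in the third step: one needs pointwise matching of Lelong numbers along the snc divisor (not only matching of the generic Lelong numbers along each individual component) in order to legitimately peel off the poles one at a time and arrive at local plurisubharmonicity of the residue. This is precisely why the two-sided bound $[\varphi] \simeq_{\mathcal I} [\pi^* u]$, rather than only $\preceq_{\mathcal I}$, must be established first, via the candidate lower bound on $\varphi$ afforded by the envelope.
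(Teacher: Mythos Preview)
Your argument is correct and gives a self-contained proof, whereas the paper simply invokes \cite[Theorem~4.3]{Kim15} to pass from $u\simeq_{\mathcal I} P[u]_{\mathcal I}$ to $[u]=[P[u]_{\mathcal I}]$, and then reads off $[u]=[P[u]]$ from the chain exactly as you do. So the two proofs agree on the trivial direction and diverge on the non-trivial one: the paper outsources it, you prove it directly via a principalizing resolution, Lemma~\ref{lem:pullback_PI}, Proposition~\ref{prop:Ilelong}, and Siu's decomposition. Your route is essentially a reproof of the cited result of Kim, so it is longer but avoids the external dependence.

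Two minor remarks on your write-up. First, after the resolution the remainder $g$ in $\pi^*u = c\sum_i \lambda_{j_i}\log|z_i|^2 + g$ is in general only bounded, not smooth (the paper's definition \eqref{eq: analytic_def} only requires bounded $g$); this does not affect your argument, since boundedness of $g$ is all you use. Second, your ``main obstacle'' paragraph overstates what is needed: the Siu subtraction only requires that the \emph{generic} Lelong number of $\varphi$ along each $D_j$ be at least $c\lambda_j$, and this already follows from the one-sided relation $[\varphi]\preceq_{\mathcal I}[\pi^*u]$ (which holds by construction of the envelope). Once the divisorial part is subtracted, the residue is a closed positive current, hence its local potential is psh and therefore automatically locally bounded above; no pointwise matching at the crossings is required for that step. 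The two-sided equivalence $\simeq_{\mathcal I}$ you establish is true but stronger than necessary here.
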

\begin{proof}
Since $u \sim_{\mathcal I} P[u]_{\mathcal{I}}$, we get that $[u] = [P[u]_{\mathcal{I}}]$ from \cite[Theorem~4.3]{Kim15}. Since $[u] \preceq [P[u]] \preceq [P[u]_{\mathcal{I}}]$, $[u] = [P[u]]$ also follows.
\end{proof}

\begin{proposition}\label{prop: PIlimit} 
Let $u \in \PSH(X,\theta)$. Then $P^{\theta + \varepsilon_j \omega}[u^D_j]_{\mathcal{I}} \searrow P^{\theta}[u]_{\mathcal{I}}$ as $j \to \infty$, where $u^D_j \in \PSH(X,\theta + \varepsilon_j \omega)$ is the approximating sequence of Theorem~\ref{thm:Demailly}.
Moreover, if $\theta_u$ is a Kähler current, then $P^{\theta}[u^D_j]_{\mathcal{I}} \searrow P^{\theta}[u]_{\mathcal{I}}$ as $j \to \infty$.
\end{proposition}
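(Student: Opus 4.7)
The plan is to prove the first statement by bracketing the decreasing limit between $P^\theta[u]_{\mathcal I}$ above and below, using Lelong numbers on modifications together with Theorem~\ref{thm:Demailly}(iii); the Kähler-current case will then follow by a sandwich.

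\emph{Monotonicity.} Set $\psi_j := P^{\theta + \varepsilon_j \omega}[u^D_j]_{\mathcal I}$. Since $u^D_{j+1} \leq u^D_j$ yields $[u^D_{j+1}] \preceq [u^D_j]$, hence $[u^D_{j+1}] \preceq_{\mathcal I} [u^D_j]$, and since $\PSH(X,\theta + \varepsilon_{j+1}\omega) \subseteq \PSH(X,\theta + \varepsilon_j \omega)$, $\psi_{j+1}$ is a candidate in the envelope defining $\psi_j$, so $\psi_{j+1} \leq \psi_j$. The decreasing limit $\psi := \lim_j \psi_j$ lies in $\PSH(X,\theta)$, because $(\theta + \varepsilon_j \omega) + \ddc \psi_j \geq 0$ passes to the limit as $\varepsilon_j \searrow 0$, and $\psi \not\equiv -\infty$ by the lower bound below.

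\emph{Lower bound.} The potential $P^\theta[u]_{\mathcal I}$ is $\theta$-psh, hence $(\theta + \varepsilon_j \omega)$-psh, and non-positive. By Theorem~\ref{thm:Demailly}(iii), $[u] \preceq_{\mathcal I} [u^D_j]$, and using $[u] \simeq_{\mathcal I} [P^\theta[u]_{\mathcal I}]$, we conclude $[P^\theta[u]_{\mathcal I}] \preceq_{\mathcal I} [u^D_j]$. Thus $P^\theta[u]_{\mathcal I}$ is a candidate in the envelope defining $\psi_j$, giving $P^\theta[u]_{\mathcal I} \leq \psi_j$ for all $j$.

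\emph{Upper bound.} I would show $[\psi] \preceq_{\mathcal I} [u]$, so that $\psi$ is itself a candidate for $P^\theta[u]_{\mathcal I}$. By Proposition~\ref{prop:Ilelong}, this reduces to verifying $\nu(\pi^* \psi, y) \geq \nu(\pi^* u, y)$ for every smooth modification $\pi: Y \to X$ and every $y \in Y$. Since $\psi \leq \psi_j$ and since the $\mathcal I$-model envelope satisfies $\psi_j \simeq_{\mathcal I} u^D_j$ (by the analogue of \cite[Proposition~2.20]{DX22}), Proposition~\ref{prop:Ilelong} gives
\[
\nu(\pi^* \psi, y) \;\geq\; \nu(\pi^* \psi_j, y) \;=\; \nu(\pi^* u^D_j, y).
\]
Letting $j \to \infty$, the quasi-equisingularity of the Demailly approximation — extracted from the inclusion $\mathcal I\!\bigl(\tfrac{s\,2^j}{2^j - s} u^D_j\bigr) \subseteq \mathcal I(su)$ via Skoda's $L^2$ estimates (see \cite[Corollary~1.12]{Dem15}) — yields $\nu(\pi^* u^D_j, y) \nearrow \nu(\pi^* u, y)$. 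Combining, $\nu(\pi^* \psi, y) \geq \nu(\pi^* u, y)$, so $\psi \leq P^\theta[u]_{\mathcal I}$, and together with the lower bound, $\psi_j \searrow P^\theta[u]_{\mathcal I}$.

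\emph{The Kähler-current case.} Here I would argue by sandwiching. The potential $P^\theta[u^D_j]_{\mathcal I}$ makes sense (indeed $u - \sup_X u$ is a candidate), is decreasing in $j$ by the same monotonicity argument, and satisfies
\[
P^\theta[u]_{\mathcal I} \;\leq\; P^\theta[u^D_j]_{\mathcal I} \;\leq\; \psi_j,
\]
where the left inequality is again the candidate argument using $[u] \preceq_{\mathcal I} [u^D_j]$, and the right inequality follows from the inclusion of candidate classes $\PSH(X,\theta) \subseteq \PSH(X, \theta + \varepsilon_j \omega)$. By the first part, both bounds converge to $P^\theta[u]_{\mathcal I}$, and the Kähler-current hypothesis on $\theta_u$ is what guarantees $P^\theta[u^D_j]_{\mathcal I}$ is $\mathcal I$-model in the correct class (so that the sandwich is meaningful and not degenerate), via the positive-mass condition needed to apply the machinery of Section~\ref{subsec:metricsingtype}.

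\emph{The main obstacle} is the Lelong-number convergence $\nu(\pi^* u^D_j, y) \to \nu(\pi^* u, y)$ along arbitrary modifications, which is precisely the heart of Demailly's quasi-equisingular approximation and where Theorem~\ref{thm:Demailly}(iii) is indispensable; the rest of the proof is a clean bookkeeping of envelope definitions.
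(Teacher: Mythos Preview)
Your overall structure mirrors the paper's: monotonicity of $\psi_j$, lower bound by showing $P^\theta[u]_{\mathcal I}$ is a candidate, upper bound by arguing $[\psi] \preceq_{\mathcal I}[u]$, and a sandwich in the K\"ahler-current case. The substantive difference is in how you justify the upper bound, and there the argument is incomplete.

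The paper stays at the level of multiplier ideal sheaves: from $\psi \leq \psi_j \simeq_{\mathcal I} u^D_j$ one gets $\mathcal I(t\delta\,\psi) \subseteq \mathcal I(t\delta\,u^D_j)$, and Theorem~\ref{thm:Demailly}(iii) gives $\mathcal I(t\delta\,u^D_j) \subseteq \mathcal I(tu)$ once $j$ is large (depending on $t,\delta$); Guan--Zhou strong openness then lets $\delta \searrow 1$. Your route passes instead through Proposition~\ref{prop:Ilelong} and asserts $\nu(\pi^*u^D_j,y)\nearrow\nu(\pi^*u,y)$ on every modification, citing ``Skoda's $L^2$ estimates''. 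But Skoda only converts a Lelong-number bound at a point into local integrability; it does not extract Lelong numbers along arbitrary divisors from multiplier ideals on $X$. More concretely, the inclusion $\mathcal I\bigl(\tfrac{s\,2^j}{2^j-s}\,u^D_j\bigr)\subseteq\mathcal I(su)$ does \emph{not} yield $[\delta\,u^D_j]\preceq_{\mathcal I}[u]$ for any fixed $j$ and uniform $\delta>1$, since the factor $\tfrac{2^j}{2^j-s}$ depends on $s$ and tends to $1$ as $s\to 0$. The Lelong-number convergence you want is in fact true, but proving it requires either the valuative formula $\nu_E(u)=\lim_m m^{-1}\operatorname{ord}_E\mathcal I(mu)$ together with the explicit Bergman-kernel description of $u^D_j$, or else exactly the multiplier-ideal-plus-strong-openness argument the paper uses. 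So the detour through Lelong numbers is not self-contained as written.

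A minor correction on the K\"ahler-current paragraph: the hypothesis is not about positive mass or the machinery of Section~\ref{subsec:metricsingtype}. Its role is simply that $\theta_u\geq\delta\omega$ forces $u^D_j\in\PSH(X,\theta)$ for large $j$; your sandwich inequality $P^\theta[u]_{\mathcal I}\leq P^\theta[u^D_j]_{\mathcal I}\leq\psi_j$ is then literally the paper's argument and is correct.
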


\begin{proof} 
We can suppose that $u \leq 0$. Since $[u^D_j] \succeq [u]$ we have that $P^{\theta + \varepsilon_j \omega}[u^D_j]_{\mathcal{I}} \geq P^{\theta + \varepsilon_j \omega}[u]_{\mathcal{I}} \geq  P^{\theta}[u]_{\mathcal{I}}$. 
Since $\{u^D_j\}_j$ is decreasing, we have that $v:= \lim_j P^{\theta + \varepsilon_j \omega}[u^D_j]_{\mathcal{I}} \in \PSH(X,\theta)$ exists and $u \leq v$.

Observe that $P^{\theta}[v]_{\mathcal{I}} =v$, since any candidate $h \in \PSH(X,\theta)$ for $P^{\theta}[v]_{\mathcal{I}}$ is also a candidate for each $P^{\theta + \varepsilon_j \omega}[u^D_j]_{\mathcal{I}}$. Hence, to finish the argument, it is enough to show that $\mathcal I(tu) = \mathcal I(tv)$ for all $t>0$. By Theorem~\ref{thm:Demailly}, for any $\delta >1$  and $t>0$ there exists $k_0(\delta,t)>0$ such that for all $k \geq k_0$ we have $\mathcal I (t \delta v) \subseteq \mathcal I (t \delta u^D_k)\subseteq \mathcal I(tu)$. Letting $\delta \searrow 1$, the Guan--Zhou strong openness theorem \cite{GuZh15} implies that $\mathcal I (t  v) \subseteq \mathcal{I}(tu)$. Since the reverse inclusion is trivial, the proof of the first assertion is finished.

To prove the second assertion, assume that $\theta_u$ is a Kähler current. Hence, for $j$ large enough, $u_j^D\in \PSH(X,\theta)$. On the other hand, observe that $P^{\theta + \varepsilon_j \omega}[u^D_j]_{\mathcal{I}}\geq P^{\theta}[u^D_j]_{\mathcal{I}}\geq P^{\theta}[u]_{\mathcal{I}}$, hence $P^{\theta}[u^D_j]_{\mathcal{I}}\searrow P^{\theta}[u]_{\mathcal{I}}$ as $j\to\infty$.
\end{proof}

We note the following important corollary of this result, that will be used numerous times in this work:

\begin{corollary}\label{cor: measureconv} Let $u \in \PSH(X,\theta)$. Then 
\[
\int_X (\theta + \varepsilon_j \omega)^n_{u^D_j} = \int_X (\theta + \varepsilon_j \omega)^n_{P^{\theta + \varepsilon_j \omega}[u^D_j]_{\mathcal{I}}} \searrow \int_X \theta_{P^{\theta}[u]_{\mathcal{I}}}^n \ \textup{ as } \ j \to \infty\,,
\]
where $u^D_j \in \PSH(X,\theta + \varepsilon_j \omega)$ is the approximating sequence of Theorem~\ref{thm:Demailly}.
\end{corollary}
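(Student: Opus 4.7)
The equality in the statement is immediate: Theorem~\ref{thm:Demailly}(ii) gives $[u^D_j]\in \mathcal{A}(X,\theta+\varepsilon_j\omega)$, so by Lemma~\ref{lem:algebraic_PI} the potentials $u^D_j$ and $\phi_j := P^{\theta+\varepsilon_j\omega}[u^D_j]_{\mathcal I}$ share the same singularity type, and the non-pluripolar mass depends only on this type by \cite[Theorem~1.1]{DDNL2}. Set $\phi := P^\theta[u]_{\mathcal I}$, so that $\phi_j \searrow \phi$ by Proposition~\ref{prop: PIlimit}. The remaining task is to prove $\int_X (\theta + \varepsilon_j\omega + \ddc\phi_j)^n \to \int_X \theta_\phi^n$.

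The strategy is a polynomial identity trick in an auxiliary parameter $\varepsilon>0$. For each $k = 0,1,\dots,n$, set
\[
c_{j,k} := \int_X \omega^k \wedge (\theta+\varepsilon_j\omega + \ddc\phi_j)^{n-k}, \qquad b_k := \int_X \omega^k \wedge \theta_\phi^{n-k}.
\]
I first claim each $j \mapsto c_{j,k}$ is non-increasing, so that $c_k := \lim_j c_{j,k}$ exists. Fix $j<m$. Since $\phi_j,\phi_m \in \PSH(X,\theta+\varepsilon_j\omega)$ with $\phi_m\leq\phi_j$, Witt Nyström's mixed-mass monotonicity \cite{WN19} in the fixed class $\theta+\varepsilon_j\omega$ against the smooth positive factor $\omega^k$ gives $\int_X \omega^k \wedge (\theta+\varepsilon_j\omega + \ddc\phi_m)^{n-k} \leq c_{j,k}$. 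Moreover, decomposing as sums of positive closed currents $\theta+\varepsilon_j\omega + \ddc\phi_m = (\theta+\varepsilon_m\omega + \ddc\phi_m) + (\varepsilon_j - \varepsilon_m)\omega$ and expanding multilinearly yields $\omega^k \wedge (\theta+\varepsilon_j\omega + \ddc\phi_m)^{n-k} \geq \omega^k \wedge (\theta+\varepsilon_m\omega + \ddc\phi_m)^{n-k}$, with equality on the $l=0$ term and the remaining terms being positive. Chaining these two inequalities gives $c_{m,k} \leq c_{j,k}$.

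To identify the limits, fix any $\varepsilon>0$. For $j$ large enough that $\varepsilon \geq \varepsilon_j$, we have $\phi_j \in \PSH(X,\theta+\varepsilon\omega)$, and \cite[Proposition~4.8]{DDNL5} applied in this fixed class gives $\int_X (\theta+\varepsilon\omega+\ddc\phi_j)^n \to \int_X (\theta+\varepsilon\omega+\ddc\phi)^n$ as $j \to \infty$. Using the positive closed decompositions $\theta+\varepsilon\omega + \ddc\phi_j = (\theta+\varepsilon_j\omega + \ddc\phi_j) + (\varepsilon-\varepsilon_j)\omega$ and $\theta+\varepsilon\omega+\ddc\phi = \theta_\phi + \varepsilon\omega$, multilinearity of the non-pluripolar product gives
\[
\sum_{k=0}^n \binom{n}{k}(\varepsilon-\varepsilon_j)^k c_{j,k} \;\longrightarrow\; \sum_{k=0}^n \binom{n}{k}\varepsilon^k b_k.
\]
Sending $j\to\infty$, the left-hand side converges to $\sum_k \binom{n}{k}\varepsilon^k c_k$ (since $\varepsilon-\varepsilon_j\to\varepsilon$ and $c_{j,k}\to c_k$ both converge). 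Hence $\sum_k \binom{n}{k}\varepsilon^k(c_k-b_k) = 0$ for every sufficiently small $\varepsilon>0$, and comparing coefficients of this polynomial in $\varepsilon$ forces $c_k = b_k$ for all $k$. The case $k=0$ is precisely the sought convergence $c_{j,0} = \int_X (\theta+\varepsilon_j\omega+\ddc\phi_j)^n \to \int_X \theta_\phi^n = b_0$, and monotonicity from the first step upgrades this to the asserted $\searrow$.

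The main technical hurdle I anticipate is verifying the mixed-mass monotonicity with the smooth positive factor $\omega^k$: while the top-product case ($k=0$) is the standard monotonicity of \cite[Theorem~1.2]{WN19}, the weighted version requires the same truncation-and-limit proof scheme adapted to mixed products, and I would either cite this directly from \cite{WN19, DDNL2} or derive it by testing against $\omega^k$ using the Bedford--Taylor definition of the truncated product.
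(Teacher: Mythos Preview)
Your handling of the equality (via Lemma~\ref{lem:algebraic_PI} and \cite[Theorem~1.1]{DDNL2}) and the monotonicity of each $c_{j,k}$ (via mixed Witt Nystr\"om monotonicity plus multilinearity) is fine. The problem is the crucial step where you invoke \cite[Proposition~4.8]{DDNL5} in the fixed class $\theta+\varepsilon\omega$ to conclude $\int_X(\theta+\varepsilon\omega+\ddc\phi_j)^n\to\int_X(\theta+\varepsilon\omega+\ddc\phi)^n$. That proposition (as the paper uses it) concerns $d_{\mathcal S}$-convergence of \emph{model} potentials in a fixed class; neither $\phi_j$ nor $\phi=P^\theta[u]_{\mathcal I}$ is model in $\PSH(X,\theta+\varepsilon\omega)$. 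Mass convergence along arbitrary decreasing sequences is simply false: e.g.\ $\max(\log|z|^2,-j)\searrow\log|z|^2$ on $\mathbb P^1$ has masses $1\to 1\neq 0$. If you repair this by replacing $\phi_j$ with its model envelope $\psi_j:=P^{\theta+\varepsilon\omega}[\phi_j]=P^{\theta+\varepsilon\omega}[u^D_j]_{\mathcal I}$ (same mass, by Lemma~\ref{lem:algebraic_PI}), then the argument of Proposition~\ref{prop: PIlimit} shows $\psi_j\searrow P^{\theta+\varepsilon\omega}[u]_{\mathcal I}$, and now Proposition~4.8 applies---but it gives $\int_X(\theta+\varepsilon\omega)_{\phi_j}^n\to\int_X(\theta+\varepsilon\omega)_{P^{\theta+\varepsilon\omega}[u]_{\mathcal I}}^n$, not $\int_X(\theta+\varepsilon\omega)_\phi^n$. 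There is no a priori reason these agree: $\phi\simeq_{\mathcal I}P^{\theta+\varepsilon\omega}[u]_{\mathcal I}$ but $\mathcal I$-equivalence does not force equal mass (the paper itself emphasizes this). Worse, the right-hand side now depends on $\varepsilon$ through the potential, so it is no longer visibly a polynomial in $\varepsilon$, and your coefficient-matching trick breaks down.

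The paper sidesteps this by never fixing an auxiliary class. It uses that each $\phi_j=P^{\theta+\varepsilon_j\omega}[u^D_j]_{\mathcal I}$ is model in its own class, so $(\theta+\varepsilon_j\omega)_{\phi_j}^n$ is concentrated on $\{\phi_j=0\}$ by \cite[Theorem~3.8]{DDNL2}; on that set one can bound the integrand by $(\theta+\varepsilon_{j_0}\omega)_{\phi_j}^n$ for any fixed $j_0\leq j$, and then apply the semicontinuity result \cite[Proposition~4.6]{DDNL5} for decreasing sequences in the \emph{fixed} class $\theta+\varepsilon_{j_0}\omega$, finally letting $j_0\to\infty$. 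This two-parameter trick is exactly what replaces the missing identity $\int(\theta+\varepsilon\omega)_{P^{\theta+\varepsilon\omega}[u]_{\mathcal I}}^n=\int(\theta+\varepsilon\omega)_\phi^n$ in your approach.
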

\begin{proof} The equality follows from Lemma~\ref{lem:algebraic_PI} and \cite[Theorem~1.1]{WN19}.

Since $P^{\theta + \varepsilon_j \omega}[u^D_j]_{\mathcal{I}}\geq  P^{\theta}[u]_{\mathcal{I}}$, we can start with the following inequality:
\[
\varliminf_{j\to\infty} \int_X (\theta + \varepsilon_j \omega)^n_{P^{\theta + \varepsilon_j \omega}[u^D_j]_{\mathcal{I}}} \geq  \varliminf_{j\to\infty} \int_X (\theta + \varepsilon_j \omega)^n_{P^{\theta}[u]_{\mathcal{I}}}  = \int_X \theta_{P^{\theta}[u]_{\mathcal{I}}}^n\,.
\]
To finish the proof, we will argue that $\varlimsup_j \int_{X} (\theta + \varepsilon_j \omega)^n_{P^{\theta+\varepsilon_j\omega}[u_j^D]_{\mathcal{I}}} \leq \int_{X} \theta^n_{P^{\theta}[u]_{\mathcal{I}}}$. Indeed, fixing $j_0 \in \mathbb N$, we have
\begin{flalign*}
\varlimsup_{j \to \infty} \int_X (\theta + \varepsilon_j \omega)^n_{P^{\theta + \varepsilon_j \omega}[u_j^D]_{\mathcal{I}}}&=\varlimsup_{j \to \infty} \int_{\left\{P^{\theta + \varepsilon_j \omega}[u_j^D]_{\mathcal{I}} =0\right\}} (\theta + \varepsilon_j \omega)^n_{P^{\theta + \varepsilon_j \omega}[u_j^D]_{\mathcal{I}}} \\
&\leq \varlimsup_{j \to \infty} \int_{\left\{P^{\theta + \varepsilon_j \omega}[u_j^D]_{\mathcal{I}} =0\right\}} (\theta + \varepsilon_{j_0} \omega)^n_{P^{\theta + \varepsilon_{j} \omega}[u_j^D]_{\mathcal{I}}}\\
&\leq \int_{\left\{P^{\theta}[u]_{\mathcal{I}} =0\right\}} (\theta + \varepsilon_{j_0} \omega)^n_{P^{\theta}[u]_{\mathcal{I}}}\,,
\end{flalign*}
where in the first line we have used that $P^{\theta + \varepsilon_j \omega}[u_j^D]_{\mathcal{I}}=P^{\theta + \varepsilon_j \omega}[u_j^D]$ (\cite[Proposition~2.20]{DX22}) and \cite[Theorem~3.8]{DDNL2}, and in the last line we have used Proposition~\ref{prop: PIlimit} and \cite[Proposition~4.6]{DDNL5}. Letting $j_0 \to \infty$, we arrive at the desired conclusion: 
\[
\varlimsup_{j \to \infty} \int_{X} (\theta + \varepsilon_j \omega)^n_{u_j^D} \leq \varliminf_{j_0 \to \infty}\int_{\{P^{\theta}[u]_{\mathcal{I}} =0\}} (\theta + \varepsilon_{j_0} \omega)^n_{P^{\theta}[u]_{\mathcal{I}}}  =  \int_{\{P^{\theta}[u]_{\mathcal{I}} =0\}} \theta^n_{P^{\theta}[u]_{\mathcal{I}}} \leq \int_{X} \theta^n_{P^{\theta}[u]_{\mathcal{I}}}\,.
\]
\end{proof}

\begin{corollary}\label{cor:decincPI}
Let $\psi\in \PSH(X,\theta)$. The following hold: 
\begin{itemize}
    \item[\textup{(i)}] $\int_X \left(\theta+\varepsilon \omega +\ddc P^{\theta + \varepsilon \omega}[\psi]_{\mathcal{I}}\right)^n \searrow \int_X \theta_{P^{\theta}[\psi]_{\mathcal{I}}}^n$  as  $\varepsilon \searrow 0$;
    \item[\textup{(ii)}]If $\theta_\psi$ is a Kähler current, then $\int_X \left(\theta-\varepsilon \omega +\ddc P^{\theta -\varepsilon \omega}[\psi]_{\mathcal{I}}\right)^n \nearrow \int_X \theta_{P^{\theta}[\psi]_{\mathcal{I}}}^n$ as $\varepsilon\searrow 0$. 
\end{itemize}
\end{corollary}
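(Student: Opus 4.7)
The plan is to handle (i) and (ii) in parallel: first show that the integral is monotone in $\varepsilon$, then sandwich the limit between matching lower and upper bounds.

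For (i), set $u_\varepsilon:=P^{\theta+\varepsilon\omega}[\psi]_{\mathcal I}$; as the candidate set grows with $\varepsilon$, the family $u_\varepsilon$ is pointwise increasing. Combined with \cite[Theorem~1.1]{WN19} applied in $\PSH(X,\theta+\varepsilon_1\omega)$ and the positivity of $(\varepsilon_1-\varepsilon_2)\omega$, this gives the chain $\int_X(\theta+\varepsilon_2\omega)^n_{u_{\varepsilon_2}}\leq \int_X(\theta+\varepsilon_1\omega)^n_{u_{\varepsilon_2}}\leq \int_X(\theta+\varepsilon_1\omega)^n_{u_{\varepsilon_1}}$ for $0<\varepsilon_2<\varepsilon_1$, so the integral is increasing in $\varepsilon$. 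The lower bound $\liminf_{\varepsilon\searrow 0}\int_X(\theta+\varepsilon\omega)^n_{u_\varepsilon}\geq\int_X\theta^n_{P^\theta[\psi]_{\mathcal I}}$ follows from $P^\theta[\psi]_{\mathcal I}\leq u_\varepsilon$, \cite[Theorem~1.1]{WN19}, and the multinomial expansion $(\theta+\varepsilon\omega+\ddc P^\theta[\psi]_{\mathcal I})^n=\theta^n_{P^\theta[\psi]_{\mathcal I}}+O(\varepsilon)$ with uniformly bounded mixed products. For the matching upper bound, I invoke the Demailly approximation $\psi^D_j\in\PSH(X,\theta+\varepsilon_j\omega)$ from Theorem~\ref{thm:Demailly}: since $\psi\leq\psi^D_j$ we get $u_{\varepsilon_j}\leq P^{\theta+\varepsilon_j\omega}[\psi^D_j]_{\mathcal I}$, and by Lemma~\ref{lem:algebraic_PI} the latter shares the singularity type of $\psi^D_j$. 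Hence \cite[Theorem~1.1]{WN19} together with Corollary~\ref{cor: measureconv} yields $\int_X(\theta+\varepsilon_j\omega)^n_{u_{\varepsilon_j}}\leq\int_X(\theta+\varepsilon_j\omega)^n_{\psi^D_j}\searrow\int_X\theta^n_{P^\theta[\psi]_{\mathcal I}}$, and monotonicity upgrades this to the full limit.

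For (ii), set $u_\varepsilon:=P^{\theta-\varepsilon\omega}[\psi]_{\mathcal I}$, well-defined for $\varepsilon\leq c$ with $\theta_\psi\geq c\omega$ (the Kähler current assumption). Since $\PSH(X,\theta-\varepsilon_1\omega)\subseteq\PSH(X,\theta-\varepsilon_2\omega)$ for $\varepsilon_1\geq\varepsilon_2$, the family $u_\varepsilon$ is now pointwise increasing as $\varepsilon\searrow 0$, and monotonicity of the integral follows by the analogous chain. The heart of the argument is identifying $u_0:=\lim_{\varepsilon\searrow 0}u_\varepsilon$ with $P^\theta[\psi]_{\mathcal I}$; one direction is automatic, and for the other I introduce the perturbation $w_\varepsilon:=(1-\varepsilon)P^\theta[\psi]_{\mathcal I}+\varepsilon\psi$ (normalizing $\psi\leq 0$), which lies in $\PSH(X,\theta-\varepsilon c\omega)$, satisfies $w_\varepsilon\leq 0$ and $[w_\varepsilon]\preceq_{\mathcal I}[\psi]$ by Corollary~\ref{cor:linearIcomp}, and therefore obeys $w_\varepsilon\leq u_{\varepsilon c}$. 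Since $w_\varepsilon\to P^\theta[\psi]_{\mathcal I}$ off the pluripolar locus $\{\psi=-\infty\}$, the desired inequality $u_0\geq P^\theta[\psi]_{\mathcal I}$ follows. With the limit identified, \cite[Theorem~2.3]{DDNL2} gives $\int_X\theta^n_{u_\varepsilon}\nearrow\int_X\theta^n_{P^\theta[\psi]_{\mathcal I}}$, while the expansion $\theta^n_{u_\varepsilon}=(\theta-\varepsilon\omega+\ddc u_\varepsilon)^n+O(\varepsilon)$---the error controlled by uniformly bounded mixed non-pluripolar products, e.g.\ via \cite[Proposition~1.20]{BEGZ10}---transfers convergence to $\int_X(\theta-\varepsilon\omega)^n_{u_\varepsilon}$; the upper bound $\int_X(\theta-\varepsilon\omega)^n_{u_\varepsilon}\leq\int_X\theta^n_{u_\varepsilon}\leq\int_X\theta^n_{P^\theta[\psi]_{\mathcal I}}$ closes the argument.

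The main obstacle is identifying the increasing limit $u_0$ in part (ii); the Kähler current hypothesis is used precisely here, through the convex combination $w_\varepsilon$ which ships $P^\theta[\psi]_{\mathcal I}$ into each class $\PSH(X,\theta-\varepsilon c\omega)$.
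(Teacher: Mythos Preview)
Your proof is correct, but it differs in structure from the paper's. The paper's argument is shorter and treats (i) and (ii) symmetrically: it fixes a single Demailly sequence $\psi^D_j$ for $\psi$ and observes that this same sequence serves as a quasi-equisingular approximation of $\psi$ viewed in \emph{every} class $\theta+\varepsilon\omega$ (respectively $\theta-\varepsilon\omega$, after taking the approximation relative to $\theta-c\omega$ when $\theta_\psi\geq c\omega$). Corollary~\ref{cor: measureconv} then expresses both $\int_X(\theta+\varepsilon\omega)^n_{P^{\theta+\varepsilon\omega}[\psi]_{\mathcal I}}$ and $\int_X\theta^n_{P^\theta[\psi]_{\mathcal I}}$ as limits in $j$ of $\int_X(\theta+(\varepsilon+\varepsilon_j)\omega)^n_{\psi^D_j}$ and $\int_X(\theta+\varepsilon_j\omega)^n_{\psi^D_j}$ respectively, and the multilinearity of non-pluripolar products makes their difference $O(\varepsilon)$ uniformly in $j$.

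By contrast, you prove monotonicity first and then sandwich: for (i) you invoke Corollary~\ref{cor: measureconv} only along the subsequence $\varepsilon=\varepsilon_j$ for the upper bound, while for (ii) you bypass Corollary~\ref{cor: measureconv} altogether and instead identify the a.e.\ limit $\lim_{\varepsilon\searrow 0}P^{\theta-\varepsilon\omega}[\psi]_{\mathcal I}=P^\theta[\psi]_{\mathcal I}$ via the convex combination $w_\varepsilon=(1-\varepsilon)P^\theta[\psi]_{\mathcal I}+\varepsilon\psi$, then appeal to \cite[Theorem~2.3]{DDNL2}. The paper's route is more economical and makes (ii) a genuine repetition of (i); yours is more hands-on and, for (ii), yields the extra information that the envelopes themselves (not just their masses) converge, which is a pleasant byproduct. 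Both are valid, and your use of the K\"ahler current hypothesis through $w_\varepsilon$ is a nice self-contained alternative to rerunning the Demailly machinery in the perturbed class.
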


\begin{proof} We approximate $\psi$ with $\psi^D_j \in \PSH(X,\theta + \varepsilon_j \omega)$ from Theorem~\ref{thm:Demailly}. For $\varepsilon>0$, applying Corollary~\ref{cor: measureconv} for $\psi \in \PSH(X,\theta + \varepsilon \omega)$ (but for the same approximating sequence $\psi^D_j \in \PSH(X,\theta + (\varepsilon+\varepsilon_j) \omega)$ independent of $\varepsilon$) we get that
\[
\begin{aligned}
\int_X \left(\theta+\varepsilon \omega +\ddc P^{\theta + \varepsilon \omega}[\psi]_{\mathcal{I}}\right)^n =& \lim_{j\to\infty} \int_X\left(\theta+(\varepsilon+\varepsilon_j) \omega +\ddc \psi_j^D\right)^n\,,\\
\int_X \left(\theta +\ddc P^{\theta}[\psi]_{\mathcal{I}}\right)^n =& \lim_{j\to\infty} \int_X\left(\theta + \varepsilon_j \omega +\ddc \psi_j^D\right)^n\,.
\end{aligned}
\]
Using the multi-linearity of non-pluripolar products, (i) follows. The proof of (ii) follows the same pattern and is left to the reader.
\end{proof}

\begin{proposition}\label{prop:posvol_dom_kahler_cur} 
Let $u \in \PSH(X,\theta)$ such that $\int_X \theta_u^n>0$. Then there exists $v \in \PSH(X,\theta)$ such that $u \geq v$ and $\theta_v \geq \delta \omega$ for some $\delta >0$.
\end{proposition}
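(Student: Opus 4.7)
The plan is to build $v$ as a convex combination $v=\alpha h+(1-\alpha)\psi$ of a Kähler current potential $\psi\in\PSH(X,\theta)$ and a $\theta$-psh function $h$ dominated by $u$ that comes from Lemma~\ref{lma:exislower}. The positive-mass hypothesis $\int_X\theta_u^n>0$ feeds both ingredients: it forces $\{\theta\}$ to be big, so that the class contains Kähler currents, and it is exactly the standing assumption of Lemma~\ref{lma:exislower}.

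First I would extract the Kähler current. By Boucksom's volume characterization, $\int_X\theta_u^n>0$ implies that $\{\theta\}$ is big, so Demailly's regularization produces $\psi\in\PSH(X,\theta)$ and $\delta_0>0$ with $\theta+\ddc\psi\geq 2\delta_0\,\omega$. Normalizing $\sup_X\psi=0$, the function $\psi$ is one of the non-positive $\theta$-psh candidates appearing in the defining supremum of $V_\theta$, so $\psi\leq V_\theta$ holds pointwise. This seemingly trivial inequality is the crucial comparison that will tie the two ingredients together.

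Next I would apply Lemma~\ref{lma:exislower} with $V_\theta$ playing the role of the less singular potential. The comparison $[u]\preceq[V_\theta]$ is automatic from minimality of $V_\theta$, and the admissible range of $b$ is non-empty because $\int_X\theta_{V_\theta}^n$ (which equals $\mathrm{vol}\{\theta\}$) is at least $\int_X\theta_u^n>0$. Fixing some such $b>1$ and setting $\alpha:=1/b\in(0,1)$, the lemma furnishes $h\in\PSH(X,\theta)$ with $\alpha h+(1-\alpha)V_\theta\leq u$. I would then take $v:=\alpha h+(1-\alpha)\psi\in\PSH(X,\theta)$. Since $\psi\leq V_\theta$,
\[
v=\alpha h+(1-\alpha)\psi\leq \alpha h+(1-\alpha)V_\theta\leq u,
\]
while by convexity of the positivity condition
\[
\theta+\ddc v=\alpha(\theta+\ddc h)+(1-\alpha)(\theta+\ddc\psi)\geq 2(1-\alpha)\delta_0\,\omega,
\]
so the desired conclusion holds with $\delta:=2(1-\alpha)\delta_0>0$.

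I do not foresee a real obstacle. The only point that needs verification is that Lemma~\ref{lma:exislower} is applicable, which amounts to checking that both $\mathrm{vol}\{\theta\}=\int_X\theta_{V_\theta}^n$ and $\int_X\theta_u^n$ are strictly positive — both direct consequences of the hypothesis. The rest is pure convex-combination bookkeeping, and the whole argument hinges on the observation that the Kähler current $\psi$ can be normalized to lie below $V_\theta$, allowing the lemma's output to be transferred from a comparison with $V_\theta$ to a comparison with $\psi$.
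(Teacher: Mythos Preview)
Your proof is correct and follows essentially the same approach as the paper's: both apply Lemma~\ref{lma:exislower} with $V_\theta$ in the role of the less singular potential to obtain $h$ with $\alpha h+(1-\alpha)V_\theta\leq u$, then replace $V_\theta$ by a normalized Kähler-current potential $\psi\leq V_\theta$ (whose existence comes from bigness of $\{\theta\}$, cited as \cite{Bo02} in the paper) to form $v=\alpha h+(1-\alpha)\psi$. The only cosmetic difference is the parametrization of the convex-combination coefficient ($\alpha=1/b$ versus $\alpha=1/(1+b)$).
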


\begin{proof} 
We may assume that $u\leq 0$. Since $u \leq V_\theta$ and $\int_X \theta_{V_\theta}^n \geq \int_X \theta_{u}^n>0$, by Lemma~\ref{lma:exislower}, there exists $b>0$ such that $h := P((1+b) u - b V_\theta) \in \PSH(X,\theta)$ and
\[
\frac{b}{b+1} V_\theta + \frac{1}{b+1} h \leq u\,.
\]
By \cite{Bo02}, there exists $w \in \PSH(X,\theta)$ such that $w\leq 0$ and $\theta_w \geq \delta' \omega$ for some $\delta' >0$. Since $w \leq V_\theta$, we obtain that 
\[
v:= \frac{b}{b+1} w + \frac{1}{b+1} h \leq u
\]
and $\theta_v \geq \frac{b \delta'}{b+1} \omega$.
\end{proof}

Next, we extend \cite[Theorem~2.24]{DX22} to big cohomology classes.

\begin{lemma}\label{lma:dSuku}
Let $u\in \PSH(X,\theta)$. Assume that $\theta_u$ is a Kähler current. Let $u_k^D$ be the approximation sequence in Theorem~\ref{thm:Demailly}. Then 
\begin{equation}\label{eq:dSuku}
d_{\mathcal{S}}([u^D_k],P^{\theta}[u]_{\mathcal{I}})\to 0 \text{  as  }k\to\infty\,.
\end{equation}
In particular, 
\begin{equation}\label{eq:convmass}
\lim_{k\to\infty}\int_X\theta_{u_k^D}^n= \int_X \theta_{P^{\theta}[u]_{\mathcal{I}}}^n\,.
\end{equation}
\end{lemma}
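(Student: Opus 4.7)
The plan is to move to the $\mathcal{I}$-envelopes and reduce everything to a monotone convergence statement about top-degree masses. Set $v := P^{\theta}[u]_{\mathcal{I}}$ and $v_k := P^{\theta}[u_k^D]_{\mathcal{I}}$. The K\"ahler current hypothesis lets me invoke the second assertion of Proposition~\ref{prop: PIlimit}: for $k$ large, $u_k^D \in \PSH(X,\theta)$ and $v_k \searrow v$. Since $u_k^D$ has neat analytic singularity type, Lemma~\ref{lem:algebraic_PI} applied in the class $\theta$ yields $[u_k^D]=[v_k]$ in $\mathcal{S}(X,\theta)$. Consequently $d_{\mathcal{S}}([u_k^D],[v]) = d_{\mathcal{S}}([v_k],[v])$, and it suffices to establish the latter goes to zero.

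Note that $\int_X \theta_v^n \geq \int_X \theta_u^n > 0$ (the inequality follows from $u - \sup_X u \leq v$ and \cite[Theorem~1.1]{WN19}). Using $\max(v_k,v) = v_k$ in the pseudometric estimate from Section~\ref{subsec:metricsingtype}, one gets
\begin{equation*}
d_{\mathcal{S}}([v_k],[v]) \leq \sum_{j=0}^n \left(\int_X \theta_{V_\theta}^j \wedge \theta_{v_k}^{n-j}-\int_X \theta_{V_\theta}^j \wedge \theta_v^{n-j}\right).
\end{equation*}
Each summand is nonnegative and monotone in $k$. The monotone convergence machinery for non-pluripolar products (\cite[Proposition~4.6, Proposition~4.8]{DDNL5}) then reduces the convergence of every mixed mass to the single top-degree statement $\int_X \theta_{v_k}^n \to \int_X \theta_v^n$.

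For this top-degree convergence, \cite[Theorem~1.2]{WN19} combined with $[v_k] = [u_k^D]$ gives $\int_X \theta_{v_k}^n = \int_X \theta_{u_k^D}^n$. Corollary~\ref{cor: measureconv} furnishes the corresponding result in the varying class, namely $\int_X (\theta + \varepsilon_k\omega)_{u_k^D}^n \to \int_X \theta_v^n$, so it remains to pass from $\theta+\varepsilon_k\omega$ to $\theta$. I would do this via the multi-linear expansion
\begin{equation*}
\int_X (\theta+\varepsilon_k \omega)_{u_k^D}^n = \int_X \theta_{u_k^D}^n + \sum_{j=1}^n \binom{n}{j}\varepsilon_k^j \int_X \omega^j\wedge \theta_{u_k^D}^{n-j},
\end{equation*}
observing that the mixed masses $\int_X \omega^j \wedge \theta_{u_k^D}^{n-j}$ are uniformly bounded (dominated by non-pluripolar masses in the fixed class $\{\theta+\omega\}$, by monotonicity of non-pluripolar products), so every $j \geq 1$ term vanishes as $\varepsilon_k \searrow 0$. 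This yields $\int_X \theta_{u_k^D}^n \to \int_X \theta_v^n$, which simultaneously proves \eqref{eq:dSuku} and the mass identity \eqref{eq:convmass}.

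The main obstacle is the transition from the perturbed class $\theta+\varepsilon_k\omega$ to the fixed class $\theta$. The Demailly approximation $u_k^D$ a priori lives only in a slightly larger class, and the multi-linear expansion above is the key technical device that converts the available result of Corollary~\ref{cor: measureconv} into the statement in the fixed class. The K\"ahler current assumption is precisely what makes this transfer legitimate, since without it one could not even embed $u_k^D$ into $\PSH(X,\theta)$.
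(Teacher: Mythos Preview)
Your proof is correct and shares the paper's core structure: pass to the $\mathcal{I}$-envelopes $v_k = P^\theta[u_k^D]_\mathcal{I}$, apply Proposition~\ref{prop: PIlimit} to get $v_k \searrow v := P^\theta[u]_\mathcal{I}$, use Lemma~\ref{lem:algebraic_PI} to identify $[u_k^D]=[v_k]$, and then establish $d_\mathcal{S}$-convergence of $[v_k]$ to $[v]$. The difference lies in how that last step is executed. The paper observes that the $v_k$ are model (\cite[Proposition~2.18(i)]{DX22}) and cites \cite[Lemma~3.6, Proposition~4.8]{DDNL5} to conclude $d_\mathcal{S}([v_k],[v])\to 0$ directly from the monotone convergence $v_k\searrow v$; the mass identity \eqref{eq:convmass} then follows as a consequence via \cite[Lemma~3.7]{DDNL5}. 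You instead prove the top-mass convergence $\int_X\theta_{u_k^D}^n\to\int_X\theta_v^n$ first---routing through Corollary~\ref{cor: measureconv} and a multilinear expansion to transfer from the class $\theta+\varepsilon_k\omega$ to $\theta$---and feed that back into the $d_\mathcal{S}$-estimate. This detour is valid (the multilinear expansion is legitimate for non-pluripolar products, and the mixed masses $\int_X\omega^j\wedge\theta_{u_k^D}^{n-j}$ are indeed uniformly bounded since they are dominated by $\int_X(\theta+\omega)_{u_k^D}^n\leq\int_X(\theta+\omega)_{V_{\theta+\omega}}^n$), but unnecessary: the DDNL5 machinery already handles decreasing sequences of model potentials without requiring the top mass as a separate input, so the paper's route is shorter. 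One minor imprecision: you refer to $u_k^D$ as having \emph{neat} analytic singularity type, but Theorem~\ref{thm:Demailly} only gives analytic singularity type; neatness is obtained after a resolution. This does not affect the argument, since Lemma~\ref{lem:algebraic_PI} only needs $[u_k^D]\in\mathcal{A}(X,\theta)$.
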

\begin{proof}
First observe that $u^D_k\in \PSH(X,\theta)$ when $k$ is large enough, so \eqref{eq:dSuku} indeed makes sense. The second assertion follows from the first and \cite[Lemma~3.7]{DDNL5}, so it suffices to prove the first. By Proposition~\ref{prop: PIlimit}, $P^{\theta}[u_k^D]_{\mathcal{I}}$ decreases to $P^{\theta}[u]_{\mathcal{I}}$ as $k\to\infty$.

Since the potentials $P^{\theta}[u_{k}^D]_{\mathcal{I}}$ are model \cite[Proposition~2.18(i)]{DX22}, by \cite[Lemma~3.6, Proposition~4.8]{DDNL5} we obtain that $d_\mathcal S([P^{\theta}[u]_{\mathcal{I}}],[P^{\theta}[u_{k}^D]]_{\mathcal{I}}) \to 0$ as $k \to \infty$. We conclude \eqref{eq:dSuku} by Lemma~\ref{lem:algebraic_PI}.
\end{proof}

\begin{theorem}\label{thm:dS_A_closure} Let $u \in \PSH(X,\theta)$ such that $\int_X \theta_u^n>0$. Then $[u] \in \overline{\mathcal A(X,\theta)}^{d_\mathcal S}$
if and only if $[P[u]] = [P[u]_{\mathcal{I}}]$. Additionally, if $[P[u]] = [P[u]_{\mathcal{I}}]$ and $\theta_u$ is a Kähler current, then the regularization sequence $\{[u^D_k]\}_k$ of Theorem~\ref{thm:Demailly} $d_\mathcal S$-converges to $[u]$.
\end{theorem}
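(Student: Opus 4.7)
The plan is to prove the two directions separately. For the sufficient direction with $\theta_u$ a Kähler current, which simultaneously yields the final assertion of the theorem, I invoke Lemma~\ref{lma:dSuku} directly: under this assumption, $u_k^D \in \PSH(X,\theta)$ for $k$ large, $[u_k^D] \in \mathcal{A}(X,\theta)$, and $d_\mathcal{S}([u_k^D],[P[u]_{\mathcal{I}}]) \to 0$. Since $[u]=[P[u]_{\mathcal{I}}]$, this is precisely $d_\mathcal{S}([u_k^D],[u]) \to 0$, giving both closure membership and the final claim of the theorem.

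For the general sufficient direction I reduce to the Kähler current case by dominating $u$ from below. Proposition~\ref{prop:posvol_dom_kahler_cur} furnishes $v \in \PSH(X,\theta)$ with $v \leq u$ and $\theta_v \geq \delta\omega$, so that $\tilde u_s := (1-s)u + sv$ satisfies $\theta_{\tilde u_s} \geq s\delta\omega$ and $\tilde u_s \nearrow u$ as $s \searrow 0$. Lemma~\ref{lma:dSuku} applied to $\tilde u_s$ produces $[P[\tilde u_s]_{\mathcal{I}}] \in \overline{\mathcal{A}(X,\theta)}^{d_\mathcal{S}}$ for every $s \in (0,1)$. By \cite[Theorem~2.3]{DDNL2}, $\int_X \theta_{\tilde u_s}^n \nearrow \int_X \theta_u^n > 0$, and the monotone-increasing $d_\mathcal{S}$-continuity criterion of \cite{DDNL5} gives $d_\mathcal{S}([\tilde u_s],[u]) \to 0$. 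Proposition~\ref{prop: conv_of_K_env}(ii) then provides $P[\tilde u_s]_{\mathcal{I}} \nearrow P[u]_{\mathcal{I}} = u$ a.e., and applying the monotone criterion a second time yields $d_\mathcal{S}([P[\tilde u_s]_{\mathcal{I}}],[u]) \to 0$. Closedness of $\overline{\mathcal{A}(X,\theta)}^{d_\mathcal{S}}$ concludes this case.

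For the necessary direction, given $[u_j] \in \mathcal{A}(X,\theta)$ with $d_\mathcal{S}([u_j],[u]) \to 0$, Lemma~\ref{lem:algebraic_PI} gives $[u_j] = [P[u_j]_{\mathcal{I}}]$, so it suffices to prove $d_\mathcal{S}$-continuity of the $\mathcal{I}$-model projection at $[u]$: the triangle inequality together with positive mass then forces $[u] = [P[u]_{\mathcal{I}}]$. I argue this continuity by the subsequence-of-subsequence trick: passing to any subsequence and applying a monotone-extraction lemma from \cite{DDNL5}, choose representatives $\tilde u_{j_k} \simeq u_{j_k}$ with $\tilde u_{j_k} \searrow \tilde u \simeq u$; Proposition~\ref{prop: conv_of_K_env}(i) gives $P[\tilde u_{j_k}]_{\mathcal{I}} \searrow P[u]_{\mathcal{I}}$, and the corresponding non-pluripolar mass convergence---provided by \cite[Theorem~1.2]{WN19} for the $\mathcal{I}$-singularity invariance of mass together with \cite[Proposition~4.8]{DDNL5}---upgrades this to $d_\mathcal{S}$-convergence. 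The main obstacle is dovetailing the monotone-subsequence extraction with Proposition~\ref{prop: conv_of_K_env}(i) and the $d_\mathcal{S}$-mass continuity, for which the full strength of the metric theory of \cite{DDNL5} is required.
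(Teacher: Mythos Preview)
Your forward direction is correct and essentially identical to the paper's argument: reduce to the K\"ahler-current case via Proposition~\ref{prop:posvol_dom_kahler_cur}, invoke Lemma~\ref{lma:dSuku} for each $\tilde u_s$, and pass to the limit $s\searrow 0$ using the monotone $d_{\mathcal S}$-continuity of \cite{DDNL5}. The K\"ahler-current case and the ``additionally'' clause are handled exactly as in the paper.

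The reverse direction, however, has a genuine gap. The ``monotone-extraction lemma from \cite{DDNL5}'' you invoke does not exist in the form you describe. What \cite[Theorem~5.6]{DDNL5} actually provides (and what the paper uses) is, after passing to a subsequence, an \emph{increasing} sequence of model potentials $w_j$ with $w_j\le v_j$ and $d_{\mathcal S}([w_j],[u])\to 0$; crucially, $[w_j]\neq [v_j]$ in general, so these are \emph{not} representatives of the original analytic classes. There is no result in \cite{DDNL5} extracting a decreasing sequence $\tilde u_{j_k}\simeq u_{j_k}$ converging to a representative of $[u]$, and such a statement cannot hold in general since a $d_{\mathcal S}$-convergent sequence of singularity types need not be comparable to its limit in the $\preceq$ order. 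Consequently Proposition~\ref{prop: conv_of_K_env}(i) cannot be applied as you propose, and the claimed $d_{\mathcal S}$-continuity of $[u]\mapsto [P[u]_{\mathcal I}]$ is left unproved.

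The paper circumvents this by exploiting the explicit description $w_j=\lim_k P(v_j,\ldots,v_{j+k})$: iterated rooftops of $\mathcal I$-model potentials are $\mathcal I$-model (Lemma~\ref{lem:rooftop_mod_I_mod}), decreasing limits preserve $\mathcal I$-modelness, and finally the increasing limit $w_j\nearrow u$ (with $u=P[u]$) forces $u$ to be $\mathcal I$-model. Note also that your conclusion ``$d_{\mathcal S}([u],[P[u]_{\mathcal I}])=0$ forces $[u]=[P[u]_{\mathcal I}]$'' is not immediate: $d_{\mathcal S}$ is only a pseudometric, and this step requires first reducing to $u=P[u]$, which the paper does explicitly but you omit.
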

Here the notation $\overline{\mathcal A(X,\theta)}^{d_\mathcal S}$ means the closure of $\mathcal A(X,\theta)$ in $\mathcal{S}(X,\theta)$ with respect to the $d_\mathcal S$-metric.
\begin{proof} To begin, let $v \in \PSH(X,\theta)$ be such that $v \leq u$ and $\theta_v \geq \delta \omega$ for some $\delta >0$. Such $v$ exists by Proposition~\ref{prop:posvol_dom_kahler_cur}. Let $v_t : = (1-t) v + t u, \ t \in [0,1]$. Then $\theta_{v_t}$ is a Kähler current for $t \in [0,1)$ and $v_t \nearrow u$ a.e. as $t \nearrow 1$. 

Assume first that $[P[u]_{\mathcal{I}}] = [P[u]]$. By replacing $u$ with $P [u]_{\mathcal{I}}$, we can additionally assume that $u = P[u]_{\mathcal{I}}$. By \cite[Lemma~2.21(iii)]{DX22} we obtain that $P[v_t]_{\mathcal{I}} \nearrow P[u]_{\mathcal{I}} = u$ a.e. as $ t \to 1$. In particular, by \cite[Lemma~4.1]{DDNL5} we obtain that $d_\mathcal S (P[v_t]_{\mathcal{I}},[u]) \to 0$ as $t \to 1$.

Let us fix $t \in [0,1)$. By the above, it is enough to argue that $[P[v_t]_{\mathcal{I}}] \in \overline{\mathcal A}^{d_\mathcal S}$. For this we apply the regularization method of Theorem~\ref{thm:Demailly} to $v_t$, obtaining $v_{t,k}^D \in \PSH(X,\theta)$ such that $[v_{t,k}^D] \in \mathcal A(X,\theta)$ (we used here that $\theta_{v_t}$ is a Kähler current). By Lemma~\ref{lma:dSuku}, $d_{\mathcal{S}}([v_{t,k}^D],[v_t])\to 0$ as $k\to\infty$. So $[P[v_t]_{\mathcal{I}}] \in \overline{\mathcal A}^{d_\mathcal S}$, and we conclude.

In the reverse direction, suppose there exists $[v_j] \in \mathcal A(X,\theta)$ such that $d_\mathcal S([v_j],[u]) \to 0$. By Lemma~\ref{lem:algebraic_PI}, we can assume that $v_j = P[v_j]_{\mathcal{I}}=P[v_j]$. In addition, we can assume that $u = P[u]$, since $d_\mathcal S(u,P[u])=0$ \cite[Theorem 3.3]{DDNL5}. Since $\int_X \theta_u^n >0$, after possibly restricting to a subsequence of $v_j$, we can use \cite[Theorem~5.6]{DDNL5} to conclude existence of an increasing sequence of model potentials $\{w_j\} \in \PSH(X,\theta)$ such that $w_j \leq v_j$ and   $d_\mathcal S([w_j],[u]) \to 0$. As pointed out after the statement of \cite[Theorem~5.6]{DDNL5}, after possibly taking a subsequence of the $v_j$, we can take 
\[
w_j := \lim_{k \to \infty} P(v_j, v_{j+1}, \ldots, v_{j+k})\,.
\]
Since all the $v_j$ are $\mathcal I$-model, an iterated application of Lemma~\ref{lem:rooftop_mod_I_mod} implies that so is $h_{j,k} := P(v_j, v_{j+1}, \ldots, v_{j+k})$. Moreover, since $w_j$ is the decreasing limit of the $h_{j,k}$, then $w_j$ is $\mathcal I$-model too (\cite[Lemma~2.21(i)]{DX22}). Lastly, since $u$ is the increasing limit of the $w_j$, then $u$ is $\mathcal I$-model as well (\cite[Lemma~2.21(iii)]{DX22}).
\end{proof}

\section{Proof of Theorem~\ref{thm:vol_formula_main}}\label{sec:proofmainthm}
Let $X$ be a connected compact Kähler manifold of dimension $n$.
For this section, let $T$ be an arbitrary holomorphic vector bundle on $X$, with rank $r$.

\subsection{The case of integral line bundles}
 Let $L$ be a pseudoeffective line bundle on $X$. Let $h$ be a smooth Hermitian metric on $L$ such that $\theta:=c_1(L,h)$.  We fix a Kähler form $\omega$ on $X$ such that $\omega-\theta$ is a Kähler form.

\begin{proposition}\label{prop:Tsuji_upper_bound} Suppose that $u \in \PSH(X,\theta)$. Then
\[
\varlimsup_{k \to \infty} \frac{1}{k^n}h^0(X,T \otimes L^k \otimes \mathcal I(ku)) \leq  \frac{r}{n!}\int_X \theta_{P[u]_{\mathcal{I}}}^n\,.
\]
\end{proposition}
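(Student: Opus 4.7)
The plan is to reduce to the case of analytic singularity type via Demailly's quasi-equisingular approximation (Theorem~\ref{thm:Demailly}), and then invoke Bonavero's Riemann--Roch theorem on a log resolution. Let $u_j^D\in\PSH(X,\theta+\varepsilon_j\omega)$ be Demailly's approximating sequence for $u$. The crucial input is property (iii) of Theorem~\ref{thm:Demailly}, namely $\mathcal I(ku)\subseteq \mathcal I(ku_j^D)$ for every $k,j$, which immediately gives
\[
h^0(X, T\otimes L^k\otimes \mathcal I(ku)) \;\leq\; h^0(X, T\otimes L^k\otimes \mathcal I(ku_j^D))\qquad\text{for all }j,k\,.
\]
It therefore suffices to prove, for each fixed $j$, the asymptotic estimate
\[
\varlimsup_{k\to\infty}\frac{1}{k^n}h^0(X, T\otimes L^k\otimes \mathcal I(ku_j^D)) \;\leq\; \frac{r}{n!}\int_X (\theta+\varepsilon_j\omega)_{u_j^D}^n\,,\qquad (\star)
\]
since Corollary~\ref{cor: measureconv} then sends the right-hand side to $\frac{r}{n!}\int_X\theta_{P[u]_{\mathcal I}}^n$ as $j\to\infty$, yielding the proposition.

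For $(\star)$, I would pass to the log resolution $\pi_j:Y_j\to X$ of \eqref{eq: neat_sing}, on which $\pi_j^*u_j^D$ has neat analytic singularity type with decomposition $\pi_j^*(\theta+\varepsilon_j\omega)+\ddc\pi_j^*u_j^D=[D_j]+\beta_j$ for a snc $\mathbb Q$-divisor $D_j$ and a smooth semipositive form $\beta_j$. Sections of $T\otimes L^k\otimes\mathcal I(ku_j^D)$ inject into sections of $\pi_j^*T\otimes\pi_j^*L^k\otimes\mathcal O_{Y_j}(-\lfloor kD_j\rfloor)$, up to corrections coming from the canonical bundle of the modification and the Demailly--Ein--Lazarsfeld pushforward formula for multiplier ideals. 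Bonavero's theorem on $Y_j$ applied to this sequence of genuine integral line bundles, together with the upper bound $\vol(\pi_j^*[L]-[D_j])\leq \int_{Y_j}\beta_j^n$ from Boucksom's theory, then produces the claimed inequality; the factor $r$ appears because $\pi_j^*T$ has rank $r$.

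The main obstacle is the middle step $(\star)$. Since $\theta+\varepsilon_j\omega$ is not the Chern class of an integral line bundle, Bonavero's theorem cannot be applied to $(L,he^{-u_j^D})$ directly on $X$: the curvature of this singular metric is only bounded below by $-\varepsilon_j\omega$, so it is not semipositive on $X$. The workaround, as sketched above, is to move to the resolution $Y_j$, where the analytic singularities become divisorial and the residual smooth form $\beta_j$ is honestly semipositive, absorbing the $-\varepsilon_j\pi_j^*\omega$ error into the asymptotic volume estimate. Once $(\star)$ is established, combining it with the comparison displayed above and letting $j\to\infty$ via Corollary~\ref{cor: measureconv} completes the proof.
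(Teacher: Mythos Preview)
Your proposal is correct and follows essentially the same route as the paper: reduce via $\mathcal I(ku)\subseteq\mathcal I(ku_j^D)$, pass to the log resolution of \eqref{eq: neat_sing} using the pushforward formula \cite[Proposition~5.8]{Dem12}, apply Bonavero on $Y_j$, and conclude with Corollary~\ref{cor: measureconv}. The only cosmetic difference is that the paper invokes Bonavero's Morse inequality directly, integrating $\pi_j^*\theta_{u_j^D}^n$ over the positivity locus $Y_j(0)$ and then enlarging to $(\theta+\varepsilon_j\omega)_{u_j^D}^n$ on $X$, whereas you rephrase the same step as a volume inequality $\vol(\pi_j^*[L]-[D_j])\leq\int_{Y_j}\beta_j^n$; note that $\pi_j^*\{L\}-\{D_j\}=\{\beta_j\}-\varepsilon_j\pi_j^*\{\omega\}$, so this last inequality uses monotonicity of the volume under adding the nef class $\varepsilon_j\pi_j^*\{\omega\}$ together with the semipositivity of $\beta_j$ that you asserted.
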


\begin{proof} Since $P[P[u]_{\mathcal{I}}]_{\mathcal{I}} = P[u]_{\mathcal{I}}$ and $\mathcal I(s P[u]_{\mathcal{I}}) = \mathcal I(s u)$ for all $s>0$ \cite[Proposition~2.18(ii)]{DX22}, we can replace $u$ with $P[u]_{\mathcal{I}}$ to assume that $u$ is $\mathcal I$-model.

Next, we apply the regularization method of Theorem~\ref{thm:Demailly} to $u$, obtaining $u_{j}^D \in \PSH(X,\theta+\varepsilon_j\omega)$ such that $[u_{j}^D] \in \mathcal A(X,\theta+\varepsilon_j\omega)$ and $u_j^D \searrow u$. Let $\pi_k: Y_k \to X$ be the smooth resolution of singularities of \eqref{eq: neat_sing}.

By \cite[Proposition~5.8]{Dem12} and \cite[Théorème~2.1]{Bon98} applied to $q=0$ on $Y_k$ (see also \cite[Theorem~2.3.18]{MM07}), we obtain that 
\begin{flalign*}
\varlimsup_{k \to \infty} \frac{1}{k^n}h^0(X,T \otimes L^k \otimes \mathcal I(ku))  &\leq \varlimsup_{k \to \infty} \frac{1}{k^n}h^0(X,T \otimes L^k \otimes \mathcal I(ku_j^D))\\
&= \varlimsup_{k \to \infty} \frac{1}{k^n}h^0(Y,\pi_k^*T \otimes (\pi_k^*L)^k \otimes K_{Y/X} \otimes \mathcal I(ku_j^D \circ \pi_k))\\
&\leq \frac{r}{n!}\int_{Y_k(0)} \pi_k^*\theta^n_{u_j^D}= \frac{r}{n!}\int_{\pi_k(Y_k(0))} \theta^n_{u_j^D} \\
&\leq \frac{r}{n!}\int_{\pi_k(Y_k(0))} (\theta + \varepsilon_j \omega)^n_{u_j^D} \leq \frac{r}{n!}\int_{X} (\theta + \varepsilon_j \omega)^n_{u_j^D},
\end{flalign*}
where $Y_k(0) \subseteq Y_k$ is the set contained in the smooth locus of the (1,1)-current $\pi_k^*\theta_{u_j^D}$ where the eigenvalues of $\pi_k^*\theta_{u_j^D}$ are all positive. By Corollary~\ref{cor: measureconv}, $\lim_{j\to\infty} \int_{X} (\theta + \varepsilon_j \omega)^n_{u_j^D} = \int_{X} \theta^n_{u}$, finishing the argument.
\end{proof}

\begin{lemma}\label{lem:boundbelow_analyt} Let $u \in \PSH(X,\theta)$ such that $\theta_u$ is a Kähler current. Let $\beta \in (0,1)$. Then there exists $k_0 := k_0(u,\beta)$ such that for all $k \geq k_0$ there exists $v_{\beta,k} \in \PSH(X,\theta)$ satisfying the following: 
\begin{itemize}
    \item[\textup{(i)}]$P[u]_{\mathcal{I}} \geq (1-\beta) u^D_k + \beta v_{\beta,k}$;
    \item[\textup{(ii)}]$\int_X \theta_{v_{\beta,k}}^n >0$.
\end{itemize}
\end{lemma}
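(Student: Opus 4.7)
The plan is to apply Lemma~\ref{lma:exislower} with $u_1 := P[u]_{\mathcal{I}}$, $u_2 := u_k^D$ and $b := 1/\beta$, which would directly yield a $v_{\beta,k} \in \PSH(X,\theta)$ satisfying $v_{\beta,k} + (1/\beta - 1) u_k^D \leq (1/\beta) P[u]_{\mathcal{I}}$, i.e.\ condition~(i). For this to be applicable I need to verify the singularity comparison $[P[u]_{\mathcal{I}}] \preceq [u_k^D]$ and the mass condition $1/\beta \in (1, (A_k/(A_k - B))^{1/n})$, where $A_k := \int_X \theta_{u_k^D}^n$ and $B := \int_X \theta_{P[u]_{\mathcal{I}}}^n$.

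For the comparison, the K\"ahler current hypothesis together with Proposition~\ref{prop: PIlimit} give $u_k^D \in \PSH(X,\theta)$ for $k$ large and $P[u_k^D]_{\mathcal{I}} \searrow P[u]_{\mathcal{I}}$, so in particular $P[u]_{\mathcal{I}} \leq P[u_k^D]_{\mathcal{I}}$. Since $[u_k^D] \in \mathcal{A}(X,\theta)$, Lemma~\ref{lem:algebraic_PI} gives $[P[u_k^D]_{\mathcal{I}}] = [u_k^D]$, and chaining these yields $[P[u]_{\mathcal{I}}] \preceq [u_k^D]$. For the mass condition, the K\"ahler-current hypothesis ensures $B \geq \int_X \theta_u^n > 0$ via $u - \sup_X u \leq P[u]_{\mathcal{I}}$ and \cite[Theorem~1.1]{WN19}, while Lemma~\ref{lma:dSuku} gives $A_k \searrow B$. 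Thus $(A_k/(A_k - B))^{1/n} \to +\infty$ as $k \to \infty$, so I can choose $k_0 = k_0(u,\beta)$ large enough that $1/\beta$ lies strictly inside this open interval for all $k \geq k_0$. Lemma~\ref{lma:exislower} then provides the desired $v_{\beta, k} = P((1/\beta) P[u]_{\mathcal{I}} + (1 - 1/\beta) u_k^D)$, giving~(i).

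The main obstacle is proving~(ii), namely $\int_X \theta_{v_{\beta, k}}^n > 0$. I would aim for an explicit lower bound of the shape
\begin{equation*}
\int_X \theta_{v_{\beta, k}}^n \;\geq\; \tfrac{1}{\beta^n}\, B \;-\; \tfrac{1 - \beta^n}{\beta^n}\, A_k,
\end{equation*}
which is strictly positive precisely under the range condition verified in the previous paragraph, hence for $k \geq k_0$. Such a bound should follow from a Khovanskii--Teissier / Brunn--Minkowski type analysis for non-pluripolar products, combined with the envelope structure: the non-pluripolar Monge--Amp\`ere measure of $v_{\beta, k}$ should be concentrated on the contact set $\{(1-\beta) u_k^D + \beta v_{\beta, k} = P[u]_{\mathcal{I}}\}$, on which one has the current identity $\theta_{v_{\beta, k}} = (1/\beta)(\theta_{P[u]_{\mathcal{I}}} - (1-\beta)\theta_{u_k^D})$. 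An alternative route is to exhibit directly a $\theta$-psh candidate $h \leq v_{\beta,k}$ with $\int_X \theta_h^n > 0$, for example a suitable combination of $P[u]_{\mathcal{I}}$ with the dominating K\"ahler-current potential supplied by Proposition~\ref{prop:posvol_dom_kahler_cur}; the delicate point is verifying the envelope constraint $(1-\beta) u_k^D + \beta h \leq P[u]_{\mathcal{I}}$ across the whole of $X$, including the polar locus of $u_k^D$. Extracting strict positivity of mass from these ingredients is the principal technical difficulty of the lemma.
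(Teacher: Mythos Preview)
Your argument for~(i) is correct and coincides with the paper's: both apply Lemma~\ref{lma:exislower} with $b=1/\beta$ to the pair $P[u]_{\mathcal I}\preceq u_k^D$, after observing via Lemma~\ref{lma:dSuku} that $\int_X\theta_{u_k^D}^n\searrow\int_X\theta_{P[u]_{\mathcal I}}^n$, so the range condition on $b$ is eventually satisfied.

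For~(ii) your proposal is incomplete: you sketch two possible routes but do not carry either through, and you yourself flag this as ``the principal technical difficulty.'' The Brunn--Minkowski/contact-set heuristic you describe is not straightforward to make rigorous here, since $v_{\beta,k}$ is only defined as an envelope and the current identity you write down is only formal on the contact set. The paper's device is much simpler and avoids all of this: pick any $\beta'\in(0,\beta)$ still satisfying the range condition $1/\beta'^n<A_k/(A_k-B)$ (possible since the condition is open in $\beta$), and apply Lemma~\ref{lma:exislower} a second time with $b=1/\beta'$ to produce
\[
h:=P\Big(\tfrac{1}{\beta'}P[u]_{\mathcal I}-\tfrac{1-\beta'}{\beta'}u_k^D\Big)\in\PSH(X,\theta),
\qquad \beta'h+(1-\beta')u_k^D\le P[u]_{\mathcal I}.
\]
Then the convex combination $\tfrac{\beta'}{\beta}h+\tfrac{\beta-\beta'}{\beta}u_k^D$ lies in $\PSH(X,\theta)$ and satisfies
\[
\tfrac{\beta'}{\beta}h+\tfrac{\beta-\beta'}{\beta}u_k^D+\tfrac{1-\beta}{\beta}u_k^D
=\tfrac{1}{\beta}\big(\beta'h+(1-\beta')u_k^D\big)\le\tfrac{1}{\beta}P[u]_{\mathcal I},
\]
so it is a candidate for $v_{\beta,k}$, hence $v_{\beta,k}\ge \tfrac{\beta'}{\beta}h+\tfrac{\beta-\beta'}{\beta}u_k^D$. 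Monotonicity of non-pluripolar mass (\cite[Theorem~1.1]{WN19}) then gives
\[
\int_X\theta_{v_{\beta,k}}^n\ge\Big(\tfrac{\beta-\beta'}{\beta}\Big)^n\int_X\theta_{u_k^D}^n\ge\Big(\tfrac{\beta-\beta'}{\beta}\Big)^n\int_X\theta_u^n>0.
\]
This is exactly your ``alternative route'' of exhibiting an explicit candidate below $v_{\beta,k}$, but the key insight you were missing is that the candidate comes from reapplying the \emph{same} lemma with a perturbed parameter, rather than from an external K\"ahler-current potential.
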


\begin{proof} 
Due to Lemma~\ref{lma:dSuku}, we have that $\int_X \theta_{u^D_k}^n \searrow \int_X \theta^n_{P[u]_{\mathcal{I}}}$. In particular, there exists $k_0>0$ such that 
\[
\frac{1}{\beta^{n}} < \frac{\int_X \theta^n_{u^D_{k}}}{\int_X\theta^n_{u^D_{k}}-\int_X\theta^n_{P[u]_{\mathcal{I}}}}\quad \text{for all } k\geq k_0\,.
\]
By Lemma \ref{lma:exislower} we obtain that $v_{k,\beta} := P(\frac{1}{\beta}P[u]_{\mathcal{I}} - \frac{1-\beta}{\beta}u^{D}_k) \in \PSH(X,\theta)$ and $P[u]_{\mathcal{I}} \geq (1-\beta) u^D_k + \beta v_{\beta,k}$.

Now we show that $v_{\beta,k}$ has positive mass. Pick $\beta'\in (0,\beta)$ such that 
\[
\frac{1}{\beta'^{n}} < \frac{\int_X \theta^n_{u^D_{k}}}{\int_X\theta^n_{u^D_{k}}-\int_X\theta^n_{P[u]_{\mathcal{I}}}}\quad \text{for all } k\geq k_0\,.
\]
Then $h:=P(\frac{1}{\beta'}P[u]_{\mathcal{I}} - \frac{1-\beta'}{\beta'}u^{D}_k) \in \PSH(X,\theta)$ is defined as well, and $v_{k,\beta} \geq \frac{\beta'}{\beta}h + \frac{\beta-\beta'}{\beta}u^D_k \in \PSH(X,\theta)$, implying that $\int_X \theta_{v_{k,\beta}}^n\geq \frac{(\beta-\beta')^n}{\beta^n} \int_X \theta_{u^D_k}^n\geq \frac{(\beta-\beta')^n}{\beta^n} \int_X \theta_{u}^n>0$, where we applied \cite[Theorem~1.1]{WN19} twice.
\end{proof}

\begin{proposition}\label{prop:Tsuji_lower_bound_Kahler} Suppose that $u \in \PSH(X,\theta)$ with $\theta_u > \delta \omega$ for some $\delta >0$. Then
\[
\varliminf_{j \to \infty} \frac{1}{j^n}h^0(X,T \otimes L^j \otimes \mathcal I(ju)) \geq  \frac{r}{n!}\int_X \theta_{P[u]_{\mathcal{I}}}^n\,.
\]
\end{proposition}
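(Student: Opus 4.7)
We follow the template of Proposition~\ref{prop:Tsuji_upper_bound}, replacing the upper approximation $u\leq u_k^D$ by a lower approximation coming from Lemma~\ref{lem:boundbelow_analyt}. Since $\mathcal I(ju)=\mathcal I(jP[u]_{\mathcal I})$ by \cite[Proposition~2.18]{DX22}, it suffices to lower-bound $h^0(X,T\otimes L^j\otimes \mathcal I(jP[u]_{\mathcal I}))$. The Kähler current hypothesis on $\theta_u$ guarantees, via the argument in the second assertion of Proposition~\ref{prop: PIlimit}, that Demailly's approximation $u_k^D$ of Theorem~\ref{thm:Demailly} lies in $\PSH(X,\theta)$ with $[u_k^D]\in\mathcal A(X,\theta)$ for all $k$ large enough.

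Fix $\beta\in(0,1)$. For all $k$ sufficiently large, Lemma~\ref{lem:boundbelow_analyt} produces $v_{\beta,k}\in\PSH(X,\theta)$ with $\int_X \theta^n_{v_{\beta,k}}>0$ and
\[
P[u]_{\mathcal I}\geq \psi_{\beta,k}:=(1-\beta)u_k^D+\beta v_{\beta,k}\,,
\]
which yields $\mathcal I(j\psi_{\beta,k})\subseteq \mathcal I(jP[u]_{\mathcal I})=\mathcal I(ju)$ and hence
\[
h^0(X,T\otimes L^j\otimes\mathcal I(ju))\geq h^0(X,T\otimes L^j\otimes\mathcal I(j\psi_{\beta,k}))\,.
\]
To apply Bonavero to the right-hand side, we further regularize $v_{\beta,k}$ via Theorem~\ref{thm:Demailly}, obtaining $v_{\beta,k,\ell}^D\searrow v_{\beta,k}$ of analytic singularity type in $\PSH(X,\theta+\varepsilon_\ell\omega)$, and set $\psi_{\beta,k,\ell}:=(1-\beta)u_k^D+\beta v_{\beta,k,\ell}^D\in\PSH(X,\theta+\beta\varepsilon_\ell\omega)$. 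Being a $\mathbb Q^+$-convex combination of analytic potentials, $\psi_{\beta,k,\ell}$ has analytic singularity type and satisfies $\psi_{\beta,k,\ell}\geq\psi_{\beta,k}$. Working on a log resolution of its singularities exactly as in the proof of Proposition~\ref{prop:Tsuji_upper_bound}, Bonavero's theorem~\cite[Th\'eor\`eme~2.1]{Bon98} together with \cite[Proposition~5.8]{Dem12} yields
\[
\lim_{j\to\infty}\frac{h^0(X,T\otimes L^j\otimes\mathcal I(j\psi_{\beta,k,\ell}))}{j^n}=\frac{r}{n!}\int_X(\theta+\beta\varepsilon_\ell\omega)^n_{\psi_{\beta,k,\ell}}\,.
\]

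We conclude by taking nested limits $\ell\to\infty$, $k\to\infty$, $\beta\to 0$. An argument analogous to Corollary~\ref{cor: measureconv}, applied to the approximation $v_{\beta,k,\ell}^D\searrow v_{\beta,k}$ in the class $\theta$, gives $\int_X(\theta+\beta\varepsilon_\ell\omega)^n_{\psi_{\beta,k,\ell}}\to \int_X \theta^n_{\psi_{\beta,k}}$ as $\ell\to\infty$. The multilinear expansion
\[
\theta^n_{\psi_{\beta,k}}=\bigl((1-\beta)\theta_{u_k^D}+\beta\theta_{v_{\beta,k}}\bigr)^n
\]
combined with the non-negativity of mixed non-pluripolar products gives $\int_X\theta^n_{\psi_{\beta,k}}\geq(1-\beta)^n\int_X\theta^n_{u_k^D}$, while Lemma~\ref{lma:dSuku} yields $\int_X\theta^n_{u_k^D}\to\int_X\theta^n_{P[u]_{\mathcal I}}$ as $k\to\infty$. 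Finally, $\beta\to 0$ produces the claimed inequality. The main obstacle is the application of Bonavero on the resolution for the perturbed class $\theta+\beta\varepsilon_\ell\omega$: one needs to verify that the asymptotic count of sections is still governed by the appropriate non-pluripolar mass, and that this mass converges to the desired value as $\ell\to\infty$, tracking the additional $\omega$-perturbation via multi-linearity as in the proof of Corollary~\ref{cor: measureconv}; careful bookkeeping of the order of the three limits in $\ell$, $k$, $\beta$ is likewise needed.
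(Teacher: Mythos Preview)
Your approach has a genuine gap in the direction of the key inequality. Demailly's regularization produces $v_{\beta,k,\ell}^D\searrow v_{\beta,k}$ from \emph{above}, so $\psi_{\beta,k,\ell}\geq \psi_{\beta,k}$ and hence $\mathcal I(j\psi_{\beta,k,\ell})\supseteq \mathcal I(j\psi_{\beta,k})$. This gives
\[
h^0(X,T\otimes L^j\otimes\mathcal I(j\psi_{\beta,k,\ell}))\ \geq\ h^0(X,T\otimes L^j\otimes\mathcal I(j\psi_{\beta,k}))\,,
\]
which is the \emph{wrong} direction: you need to bound $h^0(\cdots\mathcal I(j\psi_{\beta,k}))$ from below, not above. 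Nor can you bypass $\psi_{\beta,k}$ and compare $\psi_{\beta,k,\ell}$ directly to $u$: there is no reason that $\mathcal I(j\psi_{\beta,k,\ell})\subseteq\mathcal I(ju)$, since $\psi_{\beta,k,\ell}$ sits above $\psi_{\beta,k}$ but need not sit below $P[u]_{\mathcal I}$. (There is also a secondary issue: the Bonavero formula you write uses the class $\theta+\beta\varepsilon_\ell\omega$, but $c_1(L)=\{\theta\}$; the correct right-hand side is $\frac{r}{n!}\int_X\theta^n_{\psi_{\beta,k,\ell}}$, which only makes sense once you have checked $\psi_{\beta,k,\ell}\in\PSH(X,\theta)$ using the K\"ahler current hypothesis.)

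The paper circumvents this by never regularizing $v_{\beta,k}$. Instead it takes $\beta=p/q\in\mathbb Q$, reduces to the subsequence $j\in q\mathbb N$, and \emph{splits} the line bundle as $L^{jq}=L^{(q-p)j}\otimes L^{pj}$. On the first factor one uses the analytic weight $qu_k^D$, which lies in $\PSH(X,(q-p)\theta)$ because $\theta_{u_k^D}>\beta\omega\geq\beta\theta$ (here $\omega\geq\theta$ was fixed at the outset); Bonavero then applies directly in the correct class. On the second factor one does not need Bonavero at all: since $\int_X\theta_{v_{\beta,k}}^n>0$, Lemma~\ref{lem:posmasssection} supplies a single nonzero section $s_j\in H^0(X,L^{pj}\otimes\mathcal I(pj\,v_{\beta,k}))$, and Lemma~\ref{lem:injective} shows that tensoring by $s_j$ injects $H^0(\cdots L^{(q-p)j}\otimes\mathcal I(jq\,u_k^D))$ into $H^0(\cdots L^{jq}\otimes\mathcal I(jq\,w_{\beta,k}))$. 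This multiplication-by-a-section trick is what produces the lower bound; your extra regularization of $v_{\beta,k}$ cannot replace it.
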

\begin{proof}
To start, we fix a number $\beta=p/q\in (0,\min(\delta,1))\cap \mathbb{Q}$. It suffices to show that there is a constant $C>0$, only dependent on $r$, $n$ and $\theta$, such that
\[
\varliminf_{j \to \infty} \frac{1}{j^n}h^0(X,T \otimes L^j \otimes \mathcal I(ju)) \geq  \frac{r}{n!}\int_X \theta_{P[u]_{\mathcal{I}}}^n-C\beta\,.
\]
Writing $j=aq+b$ for some $b=0,\ldots,q-1$, observe that
\[
h^0(X,T \otimes L^j \otimes \mathcal I(ju))\geq h^0\left(X,T\otimes L^{b-q}\otimes L^{(a+1)q}\otimes \mathcal{I}((a+1)qu)\right)\,.
\]
Absorbing $L^{b-q}$ into $T$, and noticing that $b-q$ can only take a finite number of values, we find that it suffices to prove the following
\begin{equation}\label{eq:inproofh0lim}
\varliminf_{j\to\infty}\frac{1}{j^nq^n}h^0(X,T \otimes L^{jq} \otimes \mathcal I(jqu))\geq \frac{r}{n!}\int_X \theta_{P[u]_{\mathcal{I}}}^n-C\beta\,,
\end{equation}
for arbitrary twisting bundle $T$.

By Lemma~\ref{lem:boundbelow_analyt}, there is $k_0>0$ depending on $\beta$ and $u$, such that for $k\geq k_0$, there exists a potential $v_{\beta,k} \in \PSH(X,\theta)$ of positive mass such that 
\[
P[u]_{\mathcal{I}} \geq w_{\beta,k}:=(1-\beta) u^D_k + \beta v_{\beta,k}\quad \text{for all }k\geq k_0\,.
\]
For big enough $k_0$ we also have
$\theta_{u^D_k} > \beta \omega \geq \beta \theta$ for all $k\geq k_0$. In particular, $u_k^D\in \PSH(X,(1-\beta)\theta)$.
We have $
H^0(X,T\otimes L^{jq}\otimes \mathcal{I}(jqu))\supseteq H^0(X,T\otimes L^{jq}\otimes \mathcal{I}(jqw_{\beta,k})),$ hence
\begin{equation}\label{eq: ineq_est1}
h^0(X,T\otimes L^{jq}\otimes \mathcal{I}(jqu))\geq h^0(X,T\otimes L^{jq}\otimes \mathcal{I}(jqw_{\beta,k})).
\end{equation}
For each fixed $k>0$, we can take a resolution of singularities $\pi:Y\rightarrow X$, such that $\pi^*u_k^D$ has neat analytic singularities along a normal crossing $\mathbb{Q}$-divisor, as described in \eqref{eq: neat_sing}. By \cite[Proposition~5.8]{Dem12} and the projection formula,
\begin{equation}\label{eq: ineq_est2}
h^0(X,T\otimes L^{jq}\otimes \mathcal{I}(jqw_{\beta,k}))=h^0(Y,\pi^*T\otimes K_{Y/X}\otimes (\pi^*L)^{jq}\otimes \mathcal{I}(jq\pi^*w_{\beta,k}))\,.
\end{equation}
Since $\int_Y(\pi^* \theta+\ddc\pi^*v_{\beta,k})^n =\int_X \theta_{ v_{\beta,k}}^n >0$, there exists a non-zero section 
\[
s_j \in H^0\left(Y,\pi^*L^{\beta jq} \otimes \mathcal I(\beta jq \pi^* v_{\beta,k})\right)=H^0\left(Y,\pi^*L^{jp} \otimes \mathcal I( jp \pi^* v_{\beta,k})\right)
\]
for all $j$ large enough, by Lemma~\ref{lem:posmasssection}. Hence applying Lemma~\ref{lem:injective} for $T: = \pi^*T\otimes K_{Y/X}$, $E_1=\pi^*L^{q-p}$, $E_2=\pi^*L^{p}$, $\chi_1:=q\pi^*u_k^D$, $\chi_2:=p\pi^*v_{\beta,k}$, $s_j:= s_j$, $\varepsilon:= \beta$, we find
\begin{flalign}\label{eq: ineq_est3}
h^0(Y,\pi^*T\otimes & K_{Y/X}\otimes \pi^*L^{jq}\otimes \mathcal{I}(jq\pi^*w_{k,\beta})) \nonumber \\
&=h^0(Y,\pi^*T\otimes K_{Y/X}\otimes \pi^*L^{(q-p)j}\otimes \pi^*L^{pj}\otimes \mathcal{I}((1-\beta)j q \pi^*u^D_k + j p \pi^* v_{\beta,k}))) \nonumber \\
&\geq h^0(Y,\pi^*T\otimes K_{Y/X}\otimes \pi^*L^{(q-p)j}\otimes \mathcal{I}(jq\pi^*u_k^D))
\end{flalign}
for $j$ large enough (depending on $k$).

Since $\theta_{u^{D}_k} > \beta \omega \geq \beta \theta$, we notice that $qu^D_k  \in \textup{PSH}(X,\theta(q-p))$. Hence, by \cite[Théorème~2.1, Corollaire~2.2]{Bon98} (see also \cite[Theorem~2.26]{DX22}), we can write the following estimates.
\begin{flalign}\label{eq: ineq_est4}
\varliminf_{j\to\infty} \frac{1}{j^n q^n}h^0(Y,\pi^*T\otimes K_{Y/X}& \otimes \pi^*L^{(1-\beta)qj}  \otimes \mathcal{I}(jq\pi^*u_k^D)) \nonumber\\
=&\varliminf_{j\to\infty} \frac{1}{j^n q^n}h^0\left(Y,\pi^*T\otimes K_{Y/X}\otimes \pi^*L^{(q-p)j}\otimes \mathcal{I}(jq\pi^*u_k^D)\right)\nonumber\\
= & \frac{r}{q^n n!}\int_Y ((q-p)\pi^*\theta+q\ddc \pi^*u_k^D)^n\\
=& \frac{r}{n!}\int_X ((1-\beta)\theta+\ddc u_k^D)^n \nonumber\\
\geq & \frac{r}{n!}\int_X \theta_{u_k^D}^n-C\beta\,, \nonumber
\end{flalign}
where $C>0$ depends only on $r,n,\theta$.
Putting together \eqref{eq: ineq_est1},\eqref{eq: ineq_est2}, \eqref{eq: ineq_est3} and \eqref{eq: ineq_est4}  we obtain
\[
\varliminf_{j \to \infty} \frac{1}{j^n}h^0(X,T \otimes L^j \otimes \mathcal I(ju)) \geq  \frac{r}{n!}\int_X \theta_{u_k^D}^n-C\beta\,.
\]
Letting $k\to\infty$ and applying Lemma~\ref{lma:dSuku}, we conclude \eqref{eq:inproofh0lim}.
\end{proof}

\begin{lemma}\label{lem:posmasssection} Suppose that $L \to X$ is a big line bundle, with smooth Hermitian metric $h$. Let $\theta=c_1(L,h)$.
Let $v \in \PSH(X,\theta)$ with $\int_X \theta_v^n >0$. Then for $m$ big enough there exists $s \in H^0(X,L^m \otimes \mathcal I(m v))$ non-vanishing.
\end{lemma}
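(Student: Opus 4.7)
My strategy is to reduce to the case of a dominating Kähler-current potential via Proposition~\ref{prop:posvol_dom_kahler_cur}, and then extract a non-vanishing section directly from the Ohsawa--Takegoshi extension theorem.

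Normalize $v \leq 0$. Since $L$ is big and $\int_X \theta_v^n > 0$, Proposition~\ref{prop:posvol_dom_kahler_cur} produces $w \in \PSH(X,\theta)$ with $w \leq v$ and $\theta_w \geq \delta \omega$ for some $\delta > 0$. The inequality $w \leq v$ forces $\mathcal{I}(mw) \subseteq \mathcal{I}(mv)$ for every $m \geq 0$, so it suffices to exhibit a non-zero element of $H^0(X, L^m \otimes \mathcal{I}(mw))$ for $m$ sufficiently large.

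Fix a smooth Hermitian metric $h_{K_X}$ on $K_X$ with Chern form $\gamma$, and pick $M > 0$ with $-M\omega \leq \gamma \leq M\omega$. Equip $L^m \otimes K_X^{-1}$ with the singular Hermitian metric $h^m \otimes h_{K_X}^{-1} e^{-mw}$; its curvature current is $m\theta_w - \gamma \geq (m\delta - M)\omega$, which dominates $\omega$ in the sense of currents whenever $m \geq (M+1)/\delta$. For such $m$ the twisted line bundle carries a Kähler-current singular metric. Since $\{w = -\infty\}$ is pluripolar, we may pick $x_0 \in X$ with $w(x_0) > -\infty$. The Ohsawa--Takegoshi extension theorem in the compact Kähler setting (see e.g.\ \cite[Ch.~13]{Dem12}) then extends any non-zero $a \in L^m|_{x_0} = \bigl(K_X \otimes (L^m \otimes K_X^{-1})\bigr)\big|_{x_0}$ to a global section
\[
s \in H^0\bigl(X, K_X \otimes (L^m \otimes K_X^{-1})\bigr) = H^0(X, L^m)
\]
with $s(x_0) = a \neq 0$ and
\[
\int_X h^m(s,s)\, e^{-mw}\,\omega^n \;\leq\; C_m\, |a|^2 e^{-mw(x_0)} \;<\; \infty.
\]
Finiteness of this integral is exactly the statement $s \in H^0(X, L^m \otimes \mathcal{I}(mw))$, and composing with the inclusion $\mathcal{I}(mw) \subseteq \mathcal{I}(mv)$ produces the non-vanishing section claimed.

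The main technical input is the Ohsawa--Takegoshi extension, which becomes applicable only after Proposition~\ref{prop:posvol_dom_kahler_cur} upgrades the bare mass condition $\int_X \theta_v^n > 0$ into a genuine Kähler-current bound $\theta_w \geq \delta\omega$ for the dominating potential $w$; the rest of the argument (twisting by $K_X^{-1}$ and absorbing the smooth Chern form $\gamma$) is routine and costs only the mild lower bound $m \geq (M+1)/\delta$. An alternative route would approximate $w$ by analytic-singularity potentials $w_k^D$ from Theorem~\ref{thm:Demailly}, apply Bonavero's Riemann--Roch to obtain sections in $\mathcal{I}(m w_k^D)$, and then use property (iii) of Theorem~\ref{thm:Demailly} together with strong openness to pass to $\mathcal{I}(mw)$; however, tracking the ratios $\frac{s\cdot 2^k}{2^k - s}$ against the jumping numbers of the multiplier ideals requires choosing $k$ as a function of $m$ and is substantially heavier than the Ohsawa--Takegoshi argument above.
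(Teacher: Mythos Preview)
Your proof is correct and follows the same strategy as the paper: reduce via Proposition~\ref{prop:posvol_dom_kahler_cur} to a potential $w\leq v$ with $\theta_w\geq\delta\omega$, produce a nonzero section in $H^0(X,L^m\otimes\mathcal I(mw))$ for $m\gg0$, and conclude by the inclusion $\mathcal I(mw)\subseteq\mathcal I(mv)$. The paper simply cites \cite[Theorem~13.21]{Dem12} as a black box for the middle step, whereas you unwind it via Ohsawa--Takegoshi extension from a point $x_0$ where $w(x_0)>-\infty$ (so that $\mathcal I(mw)_{x_0}=\mathcal O_{x_0}$); your $K_X^{-1}$ twist and the absorption of $\gamma$ into $m\delta\omega$ are exactly the standard manoeuvres behind that citation, so the two arguments are essentially the same with yours being slightly more explicit.
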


\begin{proof} By Proposition~\ref{prop:posvol_dom_kahler_cur} there exists $w \in \PSH(X,\theta)$ such that $w \leq v$ and $\theta_w \geq \delta \omega$. By \cite[Theorem~13.21]{Dem12}, for $m$ big enough, there exists $s \in H^0(X,L^m \otimes \mathcal I(m w))$ non-zero. Since $w \leq v$, we get that $s \in H^0(X,L^m \otimes \mathcal I(m v))$. 
\end{proof}

\begin{lemma}\label{lem:injective} Suppose that $E_1,E_2,T$ are vector bundles over a  connected complex manifold $Y$,  with $\textup{rank } E_2 =1$, and $\chi_1,\chi_2$ are quasi-psh functions on $Y$, with $\chi_1$ having normal crossing divisorial singularity type. Suppose that there exists a non-zero section $s_j \in H^0(Y, E_2^{\otimes j} \otimes \mathcal I(j \chi_2))$,  for all $j$ big enough. Then for any $\varepsilon \in (0,1)$ the map $w \mapsto w \otimes s_j$  between the vector spaces
\[
H^0(Y,T \otimes E_1^{\otimes j} \otimes  \mathcal I(j\chi_1)) \to H^0\left(Y,T \otimes E_1^{\otimes j} \otimes E_2^{\otimes j} \otimes \mathcal I\left(j (1-\varepsilon)\chi_1 + j\chi_2\right)\right)
\]
is well-defined and injective, for all $j$ big enough.
\end{lemma}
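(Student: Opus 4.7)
The proof splits naturally into two independent tasks: verifying that $w \otimes s_j$ actually lies in the target space (the integrability requirement), and showing injectivity. The key input is the fact that $\chi_1$ has \emph{neat normal crossing divisorial} singularities, so the multiplier ideal $\mathcal{I}(j\chi_1)$ has a completely explicit description in terms of vanishing orders along the normal crossing divisor. This rigidity gives a uniform ``buffer'' of $\varepsilon$ in the exponent, provided $j$ is large enough.

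\textbf{Well-definedness.} The plan is to work in a local chart where, up to a smooth function, $\chi_1 = \sum_i a_i \log|z_i|^2$ for nonnegative real numbers $a_i$ associated to the local components of the normal crossing divisor. The condition $w \in \mathcal{I}(j\chi_1)$ translates, via Bedford--Taylor/basic integrability, to the statement that $w$ vanishes to order $m_i > ja_i - 1$ along $\{z_i = 0\}$. For any $\varepsilon \in (0,1)$, the local density
\[
|w|^2 e^{-j(1-\varepsilon)\chi_1}
\]
then vanishes to order at least $2(ja_i - 1) - 2j(1-\varepsilon)a_i = 2j\varepsilon a_i - 2$ along each component. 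Provided $j > 1/(\varepsilon a_{\min})$, where $a_{\min}$ is the minimum of the positive coefficients $a_i$ appearing in the (finitely many) local normal forms covering $Y$, this expression is locally bounded. Hence
\[
|w \otimes s_j|^2 e^{-j(1-\varepsilon)\chi_1 - j\chi_2} \;=\; \bigl( |w|^2 e^{-j(1-\varepsilon)\chi_1} \bigr) \cdot \bigl( |s_j|^2 e^{-j\chi_2} \bigr)
\]
is locally integrable, being the product of a locally bounded function and an $L^1_{\mathrm{loc}}$ function (the latter since $s_j \in H^0(Y, E_2^{\otimes j} \otimes \mathcal{I}(j\chi_2))$). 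This shows $w \otimes s_j$ lies in the target space for $j$ sufficiently large.

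\textbf{Injectivity.} Since $s_j$ is a nonzero holomorphic section of a line bundle on the connected manifold $Y$, its zero set $Z_j \subset Y$ is a proper analytic subvariety, hence $Y \setminus Z_j$ is connected and nonempty. If $w \otimes s_j = 0$ as a section of the target bundle, then $w = 0$ on $Y \setminus Z_j$, and by the identity principle for holomorphic sections on a connected complex manifold, $w \equiv 0$.

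\textbf{Main obstacle.} The only subtlety is ensuring that the lower bound on $j$ from the local analysis can be taken uniformly over $Y$. In the application $Y$ is a smooth modification of the compact manifold $X$, so the normal crossing divisor defining the singularity locus of $\chi_1$ has only finitely many components with a finite set of coefficients $\{a_i\}$, and a uniform $j_0 = j_0(\varepsilon, \chi_1)$ exists. The rest is bookkeeping; no delicate estimate (e.g. Ohsawa--Takegoshi, strong openness) is required, since the neat analytic form of $\chi_1$ makes the integrability comparison entirely elementary.
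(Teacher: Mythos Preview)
Your proposal is correct and follows essentially the same approach as the paper: both arguments use the explicit description of $\mathcal{I}(j\chi_1)$ for normal crossing divisorial $\chi_1$ (the paper cites Demailly's formula $\mathcal{I}(j\chi_1)=\mathcal{O}_Y(-\sum_m \lfloor \alpha_m j\rfloor D_m)$, which is exactly your vanishing-order condition $m_i>ja_i-1$) to conclude that $|w|^2 e^{-j(1-\varepsilon)\chi_1}$ is locally bounded for $j$ large, then multiply by the $L^1_{\mathrm{loc}}$ factor $|s_j|^2 e^{-j\chi_2}$, and finish injectivity via the identity theorem. Your explicit remark about uniformity of $j_0$ (finiteness of the coefficient set $\{a_i\}$) is a point the paper leaves implicit.
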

\begin{proof} Suppose that the singularity type of $\chi_1$ is given by the effective normal crossing $\mathbb R$-divisor $\sum_j \alpha_j D_j$ with $\alpha_j>0$. By \cite[Remark~5.9]{Dem12} we have that 
\[
\mathcal I(j \chi_1) = \mathcal O_Y (-\sum_m \lfloor \alpha_m j \rfloor D_j)\,.
\]
We obtain that $w e^{- j(1-\varepsilon) \chi_1 }$ is bounded for any $w \in H^0(Y,T \otimes E_1^j \otimes \mathcal I(j\chi_1))$ and $j$ big enough. Since $s_j \in H^0(Y, E_2^{\otimes j} \otimes \mathcal I(j\chi_2))$, we obtain that 
\[
w \otimes s_j \in H^0\left(Y,T \otimes E_1^{\otimes j} \otimes E_2^{\otimes j} \otimes \mathcal I(j (1-\varepsilon)\chi_1 + j\chi_2)\right)\,.
\]
Injectivity of $w \mapsto w \otimes s_j$ follows from the identity theorem of holomorphic functions.
\end{proof}

\begin{theorem}\label{thm:Tsuji} Suppose that $u \in \PSH(X,\theta)$. Then
\begin{equation}\label{eq:Tsuji_eq}
\lim_{k \to \infty} \frac{1}{k^n}h^0(X,T \otimes L^k \otimes \mathcal I(ku)) = \frac{r}{n!} \int_X \theta_{P[u]_{\mathcal{I}}}^n\,.
\end{equation}
\end{theorem}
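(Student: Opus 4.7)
The plan is to combine the upper bound already established in Proposition \ref{prop:Tsuji_upper_bound} with a matching lower bound obtained by approximating $u$ from below by potentials whose curvature currents are K\"ahler, so that the Kähler-current lower bound of Proposition \ref{prop:Tsuji_lower_bound_Kahler} applies. Since the $\varlimsup$ inequality already holds for arbitrary $u\in\PSH(X,\theta)$, the entire task reduces to proving
\[
\varliminf_{k\to\infty}\frac{1}{k^n}h^0(X,T\otimes L^k\otimes \mathcal I(ku))\ \ge\ \frac{r}{n!}\int_X \theta_{P[u]_{\mathcal I}}^n.
\]

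First I would normalize. By \cite[Proposition~2.18(ii)]{DX22} one has $\mathcal I(su)=\mathcal I(sP[u]_{\mathcal I})$ for all $s>0$, and $P[u]_{\mathcal I}$ is itself $\mathcal I$-model, so replacing $u$ by $P[u]_{\mathcal I}$ alters neither side of \eqref{eq:Tsuji_eq}; in particular I may assume $u=P[u]_{\mathcal I}\le 0$. If $\int_X\theta_u^n=0$, the upper bound already forces the limit to be $0$, matching the right-hand side. Hence I may assume $\int_X\theta_u^n>0$.

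Under this positive-mass hypothesis, Proposition \ref{prop:posvol_dom_kahler_cur} supplies $v\in\PSH(X,\theta)$ with $v\le u\le 0$ and $\theta_v\ge\delta\omega$. Setting $v_t:=(1-t)v+tu$ for $t\in[0,1)$, one has $v\le v_t\le u\le 0$ and $\theta_{v_t}\ge(1-t)\delta\omega$, so $\theta_{v_t}$ is a K\"ahler current. The inequality $v_t\le u$ gives $\mathcal I(kv_t)\subseteq\mathcal I(ku)$, whence
\[
h^0(X,T\otimes L^k\otimes\mathcal I(ku))\ \ge\ h^0(X,T\otimes L^k\otimes\mathcal I(kv_t)),
\]
and applying Proposition \ref{prop:Tsuji_lower_bound_Kahler} to $v_t$ yields
\[
\varliminf_{k\to\infty}\frac{1}{k^n}h^0(X,T\otimes L^k\otimes\mathcal I(ku))\ \ge\ \frac{r}{n!}\int_X \theta_{P[v_t]_{\mathcal I}}^n.
\]

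The main obstacle is to pass to the limit $t\nearrow 1$ and show $\int_X\theta_{P[v_t]_{\mathcal I}}^n\to\int_X\theta_u^n$. Because $v_t\le u$ gives $[v_t]\preceq_{\mathcal I}[u]$, Proposition \ref{prop:concmono}(ii) yields $P[v_t]_{\mathcal I}\le P[u]_{\mathcal I}=u$, and the family is increasing in $t$, with a.e.\ limit $w\le u$. Conversely, since $v_t\le 0$, $v_t$ itself is a candidate in the defining supremum of $P[v_t]_{\mathcal I}$, so $v_t\le P[v_t]_{\mathcal I}\le w$; sending $t\to 1$ gives $u\le w$, and hence $w=u$. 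The mass convergence $\int_X\theta_{P[v_t]_{\mathcal I}}^n\nearrow\int_X\theta_u^n$ then follows from the monotone convergence theorem for non-pluripolar Monge--Amp\`ere masses (\cite[Theorem~2.3]{DDNL2}), completing the proof. The delicate point is recognizing that the reduction to $u=P[u]_{\mathcal I}$ is precisely what allows one to sandwich the limit $w$ between $u$ and itself.
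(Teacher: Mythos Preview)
Your proof is correct and follows essentially the same route as the paper: reduce to $u=P[u]_{\mathcal I}$ with positive mass, invoke Proposition~\ref{prop:posvol_dom_kahler_cur} to obtain a K\"ahler-current minorant $v$, interpolate via $v_t=(1-t)v+tu$, apply Proposition~\ref{prop:Tsuji_lower_bound_Kahler} to each $v_t$, and let $t\nearrow 1$. The only difference is that where the paper cites \cite[Lemma~2.21(iii)]{DX22} for the convergence $P[v_t]_{\mathcal I}\nearrow P[u]_{\mathcal I}=u$, you give a direct sandwich argument $v_t\le P[v_t]_{\mathcal I}\le u$, which is clean and self-contained; your explicit observation that the reduction $u=P[u]_{\mathcal I}$ is what makes this sandwich close is exactly the point.
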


\begin{proof} Since both sides of \eqref{eq:Tsuji_eq} only depend on $P[u]_{\mathcal{I}}$, we can assume that $P[u]_{\mathcal{I}} = u$. Proposition~\ref{prop:Tsuji_upper_bound} implies \eqref{eq:Tsuji_eq} for $\int_X \theta_{u}^n=0$, so we can also assume that $\int_X \theta_{u}^n>0$. In particular, $L$ is a big line bundle and $X$ is projective.
By Proposition~\ref{prop:posvol_dom_kahler_cur}, there exists $v \leq u$ such that $\theta_v$ is a Kähler current. Let $v_t := (1-t)v + tu$. Then  $\theta_{v_t}$ is a Kähler current for $t \in [0,1)$, so we can apply Proposition~\ref{prop:Tsuji_lower_bound_Kahler} to obtain that
\[
\varliminf_{k \to \infty} \frac{1}{k^n}h^0(X,T \otimes L^k \otimes \mathcal I(ku)) \geq \varliminf_{k \to \infty} \frac{1}{k^n}h^0(X,T \otimes L^k \otimes \mathcal I(kv_t))
 \geq  \frac{r}{n!}\int_X \theta_{P[v_t]_{\mathcal{I}}}^n\,.
\]
 Letting $t \to 0$ and using \cite[Lemma~2.21(iii)]{DX22}, we obtain that 
\[
\varliminf_{k \to \infty} \frac{1}{k^n}h^0(X,T \otimes L^k \otimes \mathcal I(ku)) \geq  \frac{r}{n!} \int_X \theta_{P[u]_{\mathcal{I}}}^n\,.
\]
The reverse inequality, follows from Proposition~\ref{prop:Tsuji_upper_bound}. 
\end{proof}

\subsection{The case of \texorpdfstring{$\mathbb{R}$}{R}-line bundles}\label{subsec:Rlinebundle}

In this subsection we extend Theorem \ref{thm:Tsuji} to $\mathbb R$-line bundles. First we deal with the case of $\mathbb Q$-line bundles:

\begin{corollary}
Let $L$ be a pseudoeffective $\mathbb{Q}$-line bundle on $X$, represented by an effective $\mathbb{Q}$-divisor $D$.
Let $\theta$ be a smooth form representative of $c_1(L)$.
Let $u\in \PSH(X,\theta)$. Then
\begin{equation}\label{eq:h0rational}
\lim_{k\to\infty} \frac{1}{k^n}h^0(X,T\otimes \mathcal{O}_X(\floor*{kD})\otimes \mathcal{I}(ku))=\frac{r}{n!}\int_X \theta_{P[u]_{\mathcal{I}}}^n\,.
\end{equation}
\end{corollary}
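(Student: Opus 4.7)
The strategy is to reduce the $\mathbb{Q}$-line bundle case to the integral case (Theorem~\ref{thm:Tsuji}) by clearing denominators. Choose a positive integer $m$ such that $mD$ is an integral divisor, and set $M:=\mathcal{O}_X(mD)$. Equip $M$ with a smooth Hermitian metric whose Chern form equals exactly $\theta_M:=m\theta$ (possible since any smooth representative of $m[\theta]=c_1(M)$ can be realized this way), so that $mu\in\PSH(X,\theta_M)$. For $k\in\mathbb{N}$, write $k=qm+s$ with $0\leq s<m$; since $qmD$ is integral we get the identity of integral divisors $\lfloor kD\rfloor=qmD+\lfloor sD\rfloor$, hence
\[
\mathcal{O}_X(\lfloor kD\rfloor)=M^{\otimes q}\otimes\mathcal{O}_X(\lfloor sD\rfloor)\,.
\]
Setting $T_s:=T\otimes\mathcal{O}_X(\lfloor sD\rfloor)$, this is a holomorphic vector bundle of rank $r$. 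After normalizing $u\leq 0$ (which leaves both sides of \eqref{eq:h0rational} unchanged, as multiplier ideals are invariant under adding constants), the monotonicity $t\mapsto\mathcal{I}(tu)$ gives the sandwich
\[
\mathcal{I}((q+1)mu)\subseteq\mathcal{I}(ku)\subseteq\mathcal{I}(qmu)\,.
\]

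The next step is to apply Theorem~\ref{thm:Tsuji} to the triple $(M,mu,T_s)$ and also to $(M,mu,T_s\otimes M^{-1})$. The bijection $w\leftrightarrow w/m$ between $\{w\in\PSH(X,\theta_M):w\leq 0,\ [w]\preceq_\mathcal{I}[mu]\}$ and $\{w'\in\PSH(X,\theta):w'\leq 0,\ [w']\preceq_\mathcal{I}[u]\}$ (which follows since $\mathcal{I}(tw)=\mathcal{I}(tmw')$ for $w=mw'$, $t>0$) yields the scaling identity $P^{\theta_M}[mu]_\mathcal{I}=m\cdot P^{\theta}[u]_\mathcal{I}$, and hence $(\theta_M)_{P^{\theta_M}[mu]_\mathcal{I}}^n=m^n\,\theta^n_{P[u]_\mathcal{I}}$. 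Therefore Theorem~\ref{thm:Tsuji} gives
\[
\lim_{j\to\infty}\frac{1}{j^n}h^0(X,T_s\otimes M^j\otimes\mathcal{I}(jmu))=\frac{rm^n}{n!}\int_X\theta^n_{P[u]_\mathcal{I}}\,,
\]
and the analogous statement holds with $T_s$ replaced by $T_s\otimes M^{-1}$.

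Fixing a residue class $s\in\{0,\ldots,m-1\}$, the upper half of the sandwich and the application of Theorem~\ref{thm:Tsuji} with $j=q$ yield
\[
\varlimsup_{q\to\infty}\frac{h^0(X,T\otimes\mathcal{O}_X(\lfloor kD\rfloor)\otimes\mathcal{I}(ku))}{(qm+s)^n}\leq\varlimsup_{q\to\infty}\frac{h^0(X,T_s\otimes M^q\otimes\mathcal{I}(qmu))}{(qm+s)^n}=\frac{r}{n!}\int_X\theta^n_{P[u]_\mathcal{I}}\,,
\]
using $(qm+s)^n/(qm)^n\to 1$. For the matching lower bound, rewrite $T_s\otimes M^q=(T_s\otimes M^{-1})\otimes M^{q+1}$ and apply Theorem~\ref{thm:Tsuji} with $j=q+1$, exploiting $(qm+s)^n/((q+1)m)^n\to 1$. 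Since there are only finitely many residue classes $s$, the limit along the full sequence $k\to\infty$ exists and equals the common value $\frac{r}{n!}\int_X\theta^n_{P[u]_\mathcal{I}}$. The only non-routine ingredient is the scaling identity $P^{m\theta}[mu]_\mathcal{I}=m\cdot P^{\theta}[u]_\mathcal{I}$ together with the book-keeping of the floor function; every other step is a direct consequence of the already established integral case.
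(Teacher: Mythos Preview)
Your proof is correct and follows essentially the same route as the paper's: clear denominators, split $k$ into a quotient and remainder modulo the clearing integer, absorb the remainder into the twisting bundle (which takes only finitely many values), and sandwich $\mathcal I(ku)$ between $\mathcal I(qmu)$ and $\mathcal I((q+1)mu)$ to reduce both bounds to Theorem~\ref{thm:Tsuji}. You are slightly more explicit than the paper about the scaling identity $P^{m\theta}[mu]_{\mathcal I}=m\cdot P^{\theta}[u]_{\mathcal I}$ and the floor--function bookkeeping $\lfloor kD\rfloor=qmD+\lfloor sD\rfloor$, which the paper leaves implicit; otherwise the arguments are the same.
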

\begin{proof}
We may assume that $D':=aD$ is a line bundle $L'$ for some $a\in \mathbb{N}$. For each $k\in \mathbb{N}$, write
\[
k=k_0a+k'\,,\quad k_0\in \mathbb{N}\,,k'\in [0,a-1)\,.
\]
Note that the difference $\floor*{kD}-k_0D'$ can represent only a finite number of different line bundles.
Hence, in order to prove \eqref{eq:h0rational}, it suffices to establish the following: for each fixed $k'\in [0,a-1)$
\[
\lim_{k_0\to\infty} \frac{1}{k_0^na^n}h^0(X,T\otimes L'^{k_0}\otimes \mathcal{I}(k_0 a u+k'u))=\frac{r}{n!}\int_X \theta_{P[u]_{\mathcal{I}}}^n\,.
\]
Observe that $\mathcal{I}(k_0 a u+k'u)\subseteq \mathcal{I}(k_0au)$, so by Theorem \ref{thm:Tsuji} we have 
\[
\varlimsup_{k_0\to\infty} \frac{1}{k_0^na^n}h^0(X,T\otimes L'^{k_0}\otimes \mathcal{I}(k_0 a u+k'u))\leq \frac{r}{n!}\int_X \theta_{P[u]_{\mathcal{I}}}^n\,.
\]
On the other hand, as $\mathcal{I}(k_0 a u+k'u)\supseteq \mathcal{I}((k_0+1)au)$, we get
\begin{flalign*}
\varliminf_{k_0\to\infty} \frac{1}{k_0^na^n}h^0(X,T & \otimes L'^{k_0}\otimes \mathcal{I}(k_0 au+k'u))\geq \varliminf \frac{1}{k_0^na^n}h^0\left(X,T\otimes L'^{k_0}\otimes \mathcal{I}((k_0 + 1)a u )\right)\\
&= \varliminf_{k_0\to\infty} \frac{1}{\big((k_0 + 1)a\big)^n}h^0\left(X,T\otimes L'^* \otimes L'^{k_0 + 1}\otimes \mathcal{I}((k_0 + 1)a u )\right)\\
&= \frac{r}{n!}\int_X \theta_{P[u]_{\mathcal{I}}}^n\,,
\end{flalign*}
where in the last step we again used Theorem \ref{thm:Tsuji}, finishing the proof.
\end{proof}
\begin{corollary}
Assume that $X$ is projective.
Let $D$ be a big $\mathbb{R}$-divisor on $X$. Let $\theta$ be a smooth form representing the cohomology class $[D]$. Let $u\in \PSH(X,\theta)$.
Then
\begin{equation}\label{eq:corbigRdiv}
\lim_{k\to\infty} \frac{1}{k^n}h^0\left(X,T\otimes \mathcal{O}_X(\floor*{kD})\otimes \mathcal{I}(ku)\right)=\frac{r}{n!}\int_X \theta_{P[u]_{\mathcal{I}}}^n\,.
\end{equation}
\end{corollary}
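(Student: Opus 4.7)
The plan is to reduce the $\mathbb{R}$-divisor case to the $\mathbb{Q}$-divisor case just established by sandwiching $D$ between $\mathbb{Q}$-divisors $D^{\pm}_\varepsilon$ whose classes admit smooth representatives within $O(\varepsilon)\omega$ of $\theta$. To build $D^+_\varepsilon$, fix an integral ample divisor $H$ whose support is disjoint from that of $D$, write $D=\sum_i a_i D_i$ with $a_i\in\mathbb{R}$, pick rational over-approximations $a_i^+\in\mathbb{Q}$ with $a_i\leq a_i^+ < a_i+\varepsilon$, and set $D^+_\varepsilon := \sum_i a_i^+ D_i + \varepsilon H$. Then $D^+_\varepsilon$ is a $\mathbb{Q}$-divisor with $D^+_\varepsilon\geq D$ coefficient-wise, and the class $[D^+_\varepsilon-D]=\sum_i(a_i^+-a_i)[D_i]+\varepsilon[H]$ lies in the open Kähler cone for $\varepsilon$ small. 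One can therefore pick smooth $\theta^+_\varepsilon\in [D^+_\varepsilon]$ with $\theta\leq \theta^+_\varepsilon\leq \theta+C\varepsilon\omega$ for $C$ independent of $\varepsilon$, so in particular $u\in \PSH(X,\theta^+_\varepsilon)$.

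For the upper bound, multiplication by a nonzero section of the effective line bundle $\mathcal{O}_X(\floor*{kD^+_\varepsilon}-\floor*{kD})$ yields an injection
\[
H^0(X,T\otimes \mathcal{O}_X(\floor*{kD})\otimes \mathcal{I}(ku))\hookrightarrow H^0(X,T\otimes \mathcal{O}_X(\floor*{kD^+_\varepsilon})\otimes \mathcal{I}(ku)),
\]
and the preceding $\mathbb{Q}$-divisor corollary bounds $\limsup_k k^{-n}h^0(X,T\otimes \mathcal{O}_X(\floor*{kD})\otimes \mathcal{I}(ku))$ from above by $\frac{r}{n!}\int_X(\theta^+_\varepsilon)^n_{P^{\theta^+_\varepsilon}[u]_{\mathcal{I}}}$. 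Sandwiching $\theta^+_\varepsilon$ between $\theta$ and $\theta+C\varepsilon\omega$ and applying Corollary~\ref{cor:decincPI}(i) at the outer endpoint forces convergence to $\frac{r}{n!}\int_X\theta^n_{P[u]_{\mathcal{I}}}$ as $\varepsilon\searrow 0$.

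For the matching $\liminf\geq$ direction, first assume $\theta_u\geq\delta\omega$ is a Kähler current. Construct analogously $D^-_\varepsilon:=\sum_i a_i^- D_i-\varepsilon H$ with under-approximations $a_i-\varepsilon<a_i^-\leq a_i$, so that $D^-_\varepsilon\leq D$ coefficient-wise and a smooth $\theta^-_\varepsilon\in[D^-_\varepsilon]$ can be chosen with $\theta-C\varepsilon\omega\leq \theta^-_\varepsilon\leq \theta$. For $C\varepsilon<\delta$ the Kähler-current hypothesis yields $\theta^-_\varepsilon+\ddc u\geq (\delta-C\varepsilon)\omega\geq 0$, so $u\in \PSH(X,\theta^-_\varepsilon)$. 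Multiplication by a section of $\mathcal{O}_X(\floor*{kD}-\floor*{kD^-_\varepsilon})$ gives the opposite injection, whence the corollary applied to $D^-_\varepsilon$ bounds $\liminf_k k^{-n} h^0(X,T\otimes\mathcal{O}_X(\floor*{kD})\otimes\mathcal{I}(ku))$ from below by $\frac{r}{n!}\int_X(\theta^-_\varepsilon)^n_{P^{\theta^-_\varepsilon}[u]_{\mathcal{I}}}$; the analogous sandwich via Corollary~\ref{cor:decincPI}(ii) produces convergence to $\frac{r}{n!}\int_X\theta^n_{P[u]_{\mathcal{I}}}$. For general $u$ with $\int_X\theta_u^n>0$, use Proposition~\ref{prop:posvol_dom_kahler_cur} to find $v\leq u$ with $\theta_v$ a Kähler current, set $v_t:=(1-t)v+tu$, so $\theta_{v_t}$ is a Kähler current and $\mathcal{I}(kv_t)\subseteq \mathcal{I}(ku)$ for $t\in[0,1)$; the Kähler-current case applied to $v_t$ together with $P[v_t]_{\mathcal{I}}\nearrow P[u]_{\mathcal{I}}$ as $t\to 1$ (\cite[Lemma~2.21(iii)]{DX22}) concludes, while the case $\int_X\theta_u^n=0$ follows from the upper bound alone.

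The main technical obstacle is arranging $D^{\pm}_\varepsilon$ to simultaneously (i) be coefficient-wise ordered with respect to $D$ and (ii) admit smooth representatives $\theta^{\pm}_\varepsilon$ bracketing $\theta$ with a uniform $O(\varepsilon)\omega$ error; the combined requirement pushes one to shift by a small ample class $\pm\varepsilon H$ to force $\pm[D^{\pm}_\varepsilon-D]$ into the open Kähler cone. A secondary point is that Corollary~\ref{cor:decincPI} is stated only for straight-line perturbations $\theta\pm\varepsilon\omega$; extending to the $\theta^{\pm}_\varepsilon$ constructed here is handled by a sandwich, once one observes the routine monotonicity of the non-pluripolar mass $\int_X(\theta')^n_{P^{\theta'}[u]_{\mathcal{I}}}$ in $\theta'$ along a smooth family of comparable forms.
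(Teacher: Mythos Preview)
Your proof follows essentially the same strategy as the paper: sandwich $D$ by nearby $\mathbb{Q}$-divisors, inject section spaces via a nonzero section of the difference line bundle, apply the $\mathbb{Q}$-case, control the masses through Corollary~\ref{cor:decincPI}, and reduce the general lower bound to the K\"ahler-current case via Proposition~\ref{prop:posvol_dom_kahler_cur}. Two small repairs are needed in your construction of $D^{\pm}_\varepsilon$: in dimension $\geq 2$ an ample $H$ with support \emph{disjoint} from that of $D$ generally does not exist (what you actually need, and can always arrange, is that $H$ share no irreducible component with the $D_i$), and $H$ must be taken sufficiently ample so that $[H]+\sum_i c_i[D_i]$ stays in the K\"ahler cone for all $c_i\in[0,1]$, since the coefficients $a_i^+-a_i$ are of the same order $\varepsilon$ as the ample twist. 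The paper avoids both issues by simply choosing any $\mathbb{Q}$-divisor $D^\delta$ whose difference class $[D^\delta-D]$ admits a smooth positive representative $\theta^\delta\leq \varepsilon\omega$ (density of rational points in $H^{1,1}$), and then obtaining a nonzero section of $\mathcal{O}_X(\lfloor kD^\delta\rfloor-\lfloor kD\rfloor)$ for large $k$ from ampleness of $D^\delta-D$ rather than from coefficient-wise effectivity; this also makes your ``secondary point'' about extending Corollary~\ref{cor:decincPI} unnecessary, since one lands directly inside the interval $[\theta,\theta+\varepsilon\omega]$.
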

\begin{proof}
We first deal with the $\leq$ direction in \eqref{eq:corbigRdiv}.
Fix $\delta>0$.
Fix $\varepsilon>0$, so that 
\[
\int_X \left(\theta+\varepsilon \omega +\ddc P^{\theta + \varepsilon \omega}[u]_{\mathcal{I}}\right)^n< \int_X \theta_{P[u]_{\mathcal{I}}}^n+\delta\,.
\]
This is possible by Corollary~\ref{cor:decincPI}(i).
Take a $\mathbb{Q}$-divisor $D^{\delta}$, such that  the cohomology class $\{D^{\delta}-D\}$ has a smooth positive representative $\theta^{\delta}\leq \varepsilon\omega$. As a result,  $D^{\delta}-D$ is ample. This is possible as $X$ is projective. We have $u\in \PSH(X,\theta+\theta^{\delta})$.
Then $\mathcal{O}_X(\floor*{kD^{\delta}}-\floor*{kD})$ has a non-zero global section $s$ for $k$ big enough. As a result, $H^0(X,T\otimes \mathcal{O}_X(\floor*{kD})\otimes \mathcal{I}(ku)) \ni s' \mapsto s' \otimes s \in H^0(X,T\otimes \mathcal{O}_X(\floor*{kD^{\delta}})\otimes \mathcal{I}(ku))$ is injective, allowing to write the following estimates
\[
\begin{aligned}
\varlimsup_{k\to\infty} \frac{1}{k^n}h^0(X,T\otimes \mathcal{O}_X(\floor*{kD})\otimes \mathcal{I}(ku))\leq & \varlimsup_{k\to\infty} \frac{1}{k^n}h^0(X,T\otimes \mathcal{O}_X(\floor*{kD^{\delta}})\otimes \mathcal{I}(ku))\\
= & \frac{r}{n!}\int_X (\theta+\theta^\delta+\ddc P^{\theta +\theta^\delta}[u]_{\mathcal{I}})^n\\
\leq & \frac{r}{n!}\int_X (\theta+\varepsilon\omega+\ddc P^{\theta + \varepsilon \omega}[u]_{\mathcal{I}})^n\\
\leq &  \frac{r}{n!}\int_X \theta_{P[u]_{\mathcal{I}}}^n+\frac{r\delta}{n!}\,,
\end{aligned}
\]
where in the second line we have used Theorem~\ref{thm:Tsuji}.
Letting $\delta\to 0+$, we conclude the $\leq$ direction in \eqref{eq:corbigRdiv}.

For the reverse direction, we can replace $u$ by $P[u]_{\mathcal{I}}$, as in the proof of Theorem \ref{thm:Tsuji}. Hence, we can assume that $u$ is $\mathcal{I}$-model. If $\int_X \theta_u^n=0$, we are done by the previous arguments, so we can assume that $\int_X \theta_u^n>0$. 

We first treat the case where $\theta_u>\varepsilon_0 \omega$ for some $\varepsilon_0>0$. Fix $\delta>0$. 
Fix $\varepsilon\in (0,\varepsilon_0)$, so that
\[
\int_X \left(\theta-\varepsilon \omega +\ddc P^{\theta - \varepsilon \omega}[u]_{\mathcal{I}}\right)^n> \int_X \theta_{P[u]_{\mathcal{I}}}^n-\delta\,.
\]
This is possible by Corollary~\ref{cor:decincPI}(ii).
Take a $\mathbb{Q}$-divisor $D^{\delta}$ so that $\{D-D^{\delta}\}$ has a smooth  positive representative $\theta^{\delta}\leq \varepsilon\omega$. As a result, $D-D^{\delta}$ is ample. Then we have $u\in \PSH(X,\theta-\theta^{\delta})$. As before, we have the following estimates
\[
\begin{aligned}
\varliminf_{k\to\infty} \frac{1}{k^n}h^0(X,T\otimes \mathcal{O}_X(\floor*{kD})\otimes \mathcal{I}(ku))\geq & \varliminf_{k\to\infty} \frac{1}{k^n}h^0(X,T\otimes \mathcal{O}_X(\floor*{kD^{\delta}})\otimes \mathcal{I}(ku))\\
= & \frac{r}{n!}\int_X (\theta-\theta^\delta+\ddc P^{\theta -\theta^\delta}[u]_{\mathcal{I}})^n\\
\geq & \frac{r}{n!}\int_X (\theta-\varepsilon\omega+\ddc P^{\theta - \varepsilon \omega}[u]_{\mathcal{I}})^n\\
\geq &  \frac{r}{n!}\int_X \theta_{P[u]_{\mathcal{I}}}^n-\frac{r\delta}{n!}\,,
\end{aligned}
\]
where in the second line we have used Theorem~\ref{thm:Tsuji}. Letting $\delta\to 0+$, we conclude \eqref{eq:corbigRdiv} in this case.

Finally, we treat the general case. By Proposition~\ref{prop:posvol_dom_kahler_cur}, there exists $v\in \PSH(X,\theta)$, such that $v\leq u$ and $\theta_v$ is a Kähler current. Set $u_{t}:=(1-t)u+tv$ for $t\in [0,1]$. For $t\in (0,1]$, $\theta_{u_t}$ is still a Kähler current. By the special case treated above, we get
\begin{equation}\label{eq:liminfgeqPimass}
\begin{split}
\varliminf_{k\to\infty}\frac{1}{k^n}h^0\left(X,T\otimes \mathcal{O}_X(\floor*{kD})\otimes \mathcal{I}(ku)\right)\geq \lim_{k\to\infty} \frac{1}{k^n}h^0\left(X,T\otimes \mathcal{O}_X(\floor*{kD})\otimes \mathcal{I}(ku_t)\right)\\=\frac{r}{n!}\int_X \theta_{P[u_t]_{\mathcal{I}}}^n
\end{split}
\end{equation}
for $t\in (0,1]$. As $t\searrow 0$, $u_t \nearrow u$, hence $P[u_t]_{\mathcal{I}}\nearrow P[u]_{\mathcal{I}}$ a.e. by Proposition~\ref{prop: conv_of_K_env}(ii). By \cite[Theorem~2.3]{DDNL2}, $\int_{X}\theta_{P[u_t]_{\mathcal{I}}}^n\nearrow \int_{X}\theta_{P[u]_{\mathcal{I}}}^n$. Letting $t \searrow 0$ in  \eqref{eq:liminfgeqPimass}, we find the desired inequality
\[
\varliminf_{k\to\infty}\frac{1}{k^n}h^0\left(X,T\otimes \mathcal{O}_X(\floor*{kD})\otimes \mathcal{I}(ku)\right)\geq \frac{r}{n!}\int_X \theta_{P[u]_{\mathcal{I}}}^n\,.
\]
\end{proof}

\section{Envelopes of singularity types with respect to compact sets}\label{sec:env}
Let $X$ be a connected compact Kähler manifold of dimension $n$. 
For this whole section, let $K \subseteq X$ be a closed non-pluripolar set.
Let $\theta$ be a closed real $(1,1)$-form on $X$ representing a pseudoeffective cohomology class.
Let $u \in \PSH(X,\theta)$.

Let $v:K\rightarrow [-\infty,\infty)$ be a function. We introduce the following $K$-relative envelopes and their regularizations, refining the definitions from Section~\ref{subsec:singtype}:
\[
\begin{aligned}
E^{\theta}_{K}[u]_{\mathcal{I}}(v):=& \sup \left\{ h \in \PSH(X,\theta): h|_K \leq v  \textup{ and } [h]\preceq_{\mathcal{I}} [u]\right\}\,,\\
E^{\theta}_K [u](v):=& \sup \left\{h \in \PSH(X,\theta): h|_K \leq v  \textup{ and } [h] \preceq [u]\right\}\,,\\
P^{\theta}_{K}[u]_{\mathcal{I}}(v):=& \usc \big( E^{\theta}_{K}[u]_{\mathcal{I}}(v)\big)\,,  \quad P^{\theta}_K [u](v):= \usc\big(E^{\theta}_K [u](v)\big)\,.
\end{aligned}
\]
We omit $\theta$ and $K$ from the notations when there is no risk of confusion. When $v$ is bounded, neither of the above candidate sets are empty: one can always take $h=u-C$ for a large enough constant $C$.

We note the following maximum principles, that follow from the above definitions:
\begin{lemma}\label{lem: P_K_max_princ}Let $v\in C^0(K)$.
Let $h \in \PSH(X,\theta)$. Assume that $[h] \preceq [u]$, then 
\begin{equation}\label{eq: P_K_max_princ}
\sup_K (h-v) = \sup_{X\setminus \{h=-\infty\}} (h - E_K[u](v))=\sup_{X\setminus \{E_K[u](v)=-\infty\}} (h - E_K[u](v))\,.
\end{equation}
\end{lemma}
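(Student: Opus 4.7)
The plan is to establish both equalities by an envelope-versus-candidate argument, exploiting the fact that singularity types are invariant under constant shifts. Set $M := \sup_K(h-v)$. Since $h \in \PSH(X,\theta)$ is upper semi-continuous and thus bounded above on compact $K$, and $v \in C^0(K)$ is bounded, we have $M < \infty$. Moreover, $K$ is non-pluripolar while $\{h = -\infty\}$ is pluripolar, so $h$ is finite at some point of $K$, forcing $M > -\infty$; hence $M \in \RR$.

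For the first equality ($\leq$ direction), the shifted function $h - M$ belongs to $\PSH(X,\theta)$, satisfies $[h-M] = [h] \preceq [u]$, and fulfills $(h-M)|_K \leq v$ by the definition of $M$. Hence $h - M$ is a competitor in the defining family of $E_K[u](v)$, which gives
\[
h - M \leq E_K[u](v) \quad \text{on } X,
\]
so $h - E_K[u](v) \leq M$ at every point of $X \setminus \{h = -\infty\}$. For the reverse direction, note that $E_K[u](v) \leq v$ on $K$ directly from its definition as the pointwise supremum of functions bounded by $v$ on $K$. Therefore $h - E_K[u](v) \geq h - v$ on $K \cap \{h > -\infty\}$, and since $h - v$ is upper semi-continuous on compact $K$ its finite supremum $M$ is attained at some $x_0 \in K$ where $h(x_0) > -\infty$. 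Combining these two observations proves the first equality.

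For the second equality, the inequality $h - M \leq E_K[u](v)$ established above implies the inclusion $\{E_K[u](v) = -\infty\} \subseteq \{h = -\infty\}$, because wherever $h$ is finite, so is $E_K[u](v)$. Consequently $X \setminus \{h = -\infty\} \subseteq X \setminus \{E_K[u](v) = -\infty\}$, and the extra points $\{h = -\infty\} \cap \{E_K[u](v) > -\infty\}$ contribute the value $-\infty$ to the supremum of $h - E_K[u](v)$, hence do not alter it.

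The only mildly delicate point is the bookkeeping of the pluripolar loci $\{h = -\infty\}$ and $\{E_K[u](v) = -\infty\}$: one must ensure that no indeterminate $\infty - \infty$ expression ever arises on either side, and that the supremum of $h-v$ on $K$ is actually attained at a point where $h$ is finite, both of which follow from the compactness of $K$ together with upper semi-continuity. The substantive content reduces to the one-line observation that $h - M$ is an admissible competitor in the envelope.
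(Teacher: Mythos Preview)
Your proof is correct and follows essentially the same approach as the paper: both directions of the first equality come from the candidate $h' = h - M$ being admissible in the envelope and from $E_K[u](v)|_K \leq v$, and the second equality follows from the inclusion $\{E_K[u](v)=-\infty\}\subseteq\{h=-\infty\}$. Your extra remark that the supremum $M$ is actually attained (via upper semi-continuity on compact $K$) is a pleasant but inessential refinement; the paper simply uses that the supremum over $K$ coincides with the supremum over $K\setminus\{h=-\infty\}$ since $h-v=-\infty$ on the removed set.
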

\begin{proof}
We prove the first equality at first. We write $S=\{h=-\infty\}$.

By definition, $E_K[u](v)|_K\leq v$, so 
\[
\left.\left(h-E_K[u](v)\right)\right|_{K\setminus S}\geq h|_{K\setminus S}-v|_{K\setminus S}\,.
\]
This implies that  $\sup_K (h-v) \leq \sup_{X\setminus S} (h - E_K[u](v))$.

Conversely, observe that $\sup_K(h-v)>-\infty$ as $K$ is non-pluripolar.
Let $h':=h-\sup_K (h-v)$, then $h'$ is a candidate in the definition of $E_K[u](v)$, hence $h'\leq E_K[u](v)$, namely
\[
h-\sup_K (h-v)\leq E_K[u](v)\,,
\]
the latter implies that $\sup_K (h-v) \geq \sup_{X\setminus S} (h - E_K[u](v))$, finishing the proof of the first identity.

We have $\{E_K[u](v) =-\infty\} \subseteq S$, and we notice that points in $S \setminus \{E_K[u](v) =-\infty\}$ do not contribute to the supremum of $h - E_K[u](v)$ on $X \setminus \{E_K[u](v) =-\infty\}$, hence the last equality of \eqref{eq: P_K_max_princ} also follows.
\end{proof}

Next, we make the following observations about the singularity types of our envelopes:
\begin{lemma}\label{lem: same_sing_type} For any $v \in C^0(K)$ we have $[P_K[u](v)] = [P[u]]$ and $[P_{K} [u]_{\mathcal{I}}(v)] = [P [u]_{\mathcal{I}}]$. In particular, if $[u] \in \mathcal A(X,\theta)$ then $P_K[u](v) = P_{K}[u]_{\mathcal{I}}(v)$.
\end{lemma}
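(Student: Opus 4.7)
The plan is to establish the two singularity type identities $[P_K[u](v)]=[P[u]]$ and $[P_K[u]_{\mathcal I}(v)]=[P[u]_{\mathcal I}]$ by a pair of matched inequalities, and then deduce the last assertion using Lemma~\ref{lem:algebraic_PI}.

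The easy direction $[P[u]] \preceq [P_K[u](v)]$ is immediate: since $P[u] \leq 0$ on $X$, for any constant $C \leq \inf_K v$ the function $P[u]+C$ is $\theta$-psh, satisfies $(P[u]+C)|_K \leq v$, and has singularity type $[P[u]+C] = [P[u]] \preceq [u]$; hence it is a candidate in the definition of $E_K[u](v)$, so $P[u]+C \leq P_K[u](v)$. The same argument applied to $P[u]_{\mathcal I} \leq 0$ yields $[P[u]_{\mathcal I}] \preceq [P_K[u]_{\mathcal I}(v)]$.

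For the reverse inequality $[P_K[u](v)] \preceq [P[u]]$, the key observation is that if $h$ is any candidate in the definition of $E_K[u](v)$, then $h - \sup_X h \leq 0$ with $[h - \sup_X h] = [h] \preceq [u]$, so $h - \sup_X h$ is itself a candidate for $P[u]$, giving $h \leq P[u] + \sup_X h$. The crux is thus a uniform upper bound on $\sup_X h$ over all candidates. This is supplied by the extremal function $V^{\star}_{K,\theta} := \usc\sup\{\varphi \in \PSH(X,\theta) : \varphi|_K \leq 0\}$, which is a genuine $\theta$-psh function by non-pluripolarity of $K$ and therefore bounded above on the compact manifold $X$ with some constant $M_K < \infty$. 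Indeed, $(h-\sup_K v)|_K \leq 0$ forces $h - \sup_K v \leq V^{\star}_{K,\theta} \leq M_K$, i.e.\ $\sup_X h \leq M_K + \sup_K v$, so $h \leq P[u] + M_K + \sup_K v$ uniformly in $h$. Taking the supremum over candidates and using that the right-hand side is already usc, we obtain $P_K[u](v) \leq P[u] + C$ for a constant $C$ independent of $h$. The $\mathcal I$-version is proved verbatim with $\preceq_{\mathcal I}$ in place of $\preceq$, using that adding a constant does not change the $\mathcal I$-singularity type.

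For the last assertion, assume $[u] \in \mathcal A(X,\theta)$. The inequality $P_K[u](v) \leq P_K[u]_{\mathcal I}(v)$ is automatic. For the reverse, it suffices to verify that $[h] \preceq_{\mathcal I} [u]$ implies $[h] \preceq [u]$ when $[u]$ is analytic, since this makes every candidate for $E_K[u]_{\mathcal I}(v)$ also a candidate for $E_K[u](v)$. Pick a modification $\pi: Y \to X$ along which $\pi^{*}u$ has neat analytic singularities along an snc $\mathbb Q$-divisor $\sum_j \alpha_j D_j$. By Proposition~\ref{prop:Ilelong}, $[h] \preceq_{\mathcal I} [u]$ forces $\nu(\pi^{*}h, y) \geq \nu(\pi^{*}u, y)$ for every $y \in Y$, and in particular $\nu(\pi^{*}h,\cdot) \geq \alpha_j$ along $D_j$. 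The standard local Lelong-number-to-growth estimate along snc divisors (a Siu-type decomposition) then yields $\pi^{*}h \leq \pi^{*}u + C$ on $Y$, hence $h \leq u + C$ on $X$, so $[h] \preceq [u]$. This last step is the main obstacle: the passage from $\mathcal I$-equivalence to pointwise domination up to a constant crucially uses the analytic/snc structure of $[u]$ and a precise local growth estimate for quasi-psh functions along snc divisors.
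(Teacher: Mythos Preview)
Your argument for the two singularity type identities follows the paper's line, but there is a slip in the easy direction. You assert that $P[u]+C$ is a candidate for $E_K[u](v)$ because $[P[u]] \preceq [u]$; this can fail (take $u \in \mathcal E(X,\theta) \setminus L^\infty$ in a K\"ahler class, so $P[u]=0$ while $[0] \preceq [u]$ would force $u$ to be bounded below). The fix is immediate: shift each candidate $h$ for $P[u]$ by $C \leq \inf_K v$ to obtain a candidate for $E_K[u](v)$, then take suprema and usc regularizations to get $P[u]+C \leq P_K[u](v)$ without ever claiming $[P[u]] \preceq [u]$. (For the $\mathcal I$-envelope the issue does not arise, since $\mathcal I(tP[u]_\mathcal I)=\mathcal I(tu)$ for all $t>0$, so $P[u]_\mathcal I+C$ is a genuine candidate.) Your reverse direction via the extremal function $V^\star_{K,\theta}$ is exactly the paper's argument in different words.

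For the last assertion you take a different route from the paper. The paper uses Lemma~\ref{lem:algebraic_PI} to get $[u]=[P[u]_\mathcal I]$ and then observes that $[h]\preceq_\mathcal I[u]$ implies $P[h]_\mathcal I \leq P[u]_\mathcal I$, whence $[h]\preceq [P[h]_\mathcal I]\preceq [P[u]_\mathcal I]=[u]$. Your approach via a log resolution and Siu's decomposition is correct and essentially reproves the content of Lemma~\ref{lem:algebraic_PI} by hand; it is more self-contained but longer. One small imprecision: after resolution, $\pi^*u$ need only have divisorial singularities $\sum_j\alpha_j\log|g_j|^2$ up to a \emph{bounded} remainder (not necessarily smooth, since the $g$ in \eqref{eq: analytic_def} is only bounded), but this does not affect your argument, which only uses the $O(1)$ bound.
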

\begin{proof}Let $C>0$ such that $-C \leq v \leq C$. 
Then $P[u] -C \leq P_K[u](v)$. Since $K$ is non-pluripolar, for $h \in \PSH(X,\theta)$ the condition $h|_K \leq v\leq C$ implies that $h \leq \tilde C$ on $X$ for some $\tilde C:=\tilde C(C,K)>0$ \cite[Corollary 4.3]{GZ07}. This implies that $P_K[u](v) \leq P[u] + \tilde C$, giving $[P_K[u](v)] = [P[u]]$. The exact same argument applies in case of the $P[\cdot]_\mathcal I$ envelope as well. Finally, in case $[u] \in \mathcal A(X,\theta)$, we have that $[u]=[P[u]_\mathcal I] = [P[u]]$ (Lemma~\ref{lem:algebraic_PI}). We claim that for $h\in \PSH(X,\theta)$, $[h]\preceq [u]$ if and only if $[h]\preceq_{\mathcal{I}}[u]$. This claim
immediately gives $P_K[u](v) = P_{K}[u]_{\mathcal{I}}(v)$. The forward direction of the claim is trivial, so suppose $[h]\preceq_{\mathcal{I}}[u]$. We then have $P[h]_{\mathcal{I}}\leq P[u]_{\mathcal{I}}$. This implies that $[h]\preceq [P[h]_{\mathcal{I}}]\preceq [P[u]_{\mathcal{I}}]=[u] $.
\end{proof}


\begin{corollary}\label{cor:projectivity} Let $u \in \textup{PSH}(X,\theta),  v \in C^0(X)$.  Then $P_K[u]_\mathcal I(v) = P_K[P_K[u]_\mathcal I(v)]_\mathcal I(v)$. 
\end{corollary}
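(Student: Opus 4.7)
The strategy is to show that the two envelopes on either side of the claimed identity are defined by the \emph{same} family of candidate potentials, from which the equality is immediate.

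Set $w := P_K[u]_\mathcal I(v)$. The plan is to establish that $[w] \simeq_\mathcal I [u]$, i.e., $\mathcal I(tw) = \mathcal I(tu)$ for all $t > 0$. Granting this, a potential $h \in \PSH(X,\theta)$ satisfies $[h] \preceq_\mathcal I [w]$ if and only if $[h] \preceq_\mathcal I [u]$, so the defining families of $P_K[w]_\mathcal I(v)$ and $P_K[u]_\mathcal I(v)$ coincide, and the conclusion follows from the very definition of the envelope.

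To prove $\mathcal I(tw) = \mathcal I(tu)$, I would argue in two steps. First, Lemma~\ref{lem: same_sing_type} gives $[w] = [P[u]_\mathcal I]$, meaning that $w$ and $P[u]_\mathcal I$ differ by a globally bounded function on $X$. Since $|f|^2 e^{-tw}$ and $|f|^2 e^{-tP[u]_\mathcal I}$ then differ locally by a factor bounded above and below by positive constants, their integrability is equivalent, yielding $\mathcal I(tw) = \mathcal I(tP[u]_\mathcal I)$ for every $t > 0$. Second, by \cite[Proposition~2.18(ii)]{DX22} (already invoked in the proof of Proposition~\ref{prop:Tsuji_upper_bound}), $\mathcal I(tP[u]_\mathcal I) = \mathcal I(tu)$ for every $t > 0$. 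Combining these gives the required $\mathcal I$-equivalence.

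There is no serious obstacle here; the only point that deserves attention is that the equivalence $[w] = [P[u]_\mathcal I]$ supplied by Lemma~\ref{lem: same_sing_type} is with respect to $\simeq$, not $\simeq_\mathcal I$, and one must observe that the former automatically upgrades to the latter because $\simeq$ means the potentials differ by a globally bounded function, which has no effect on multiplier ideals. Once this observation is made, the proof reduces to unwinding definitions.
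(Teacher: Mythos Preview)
Your proof is correct and follows essentially the same route as the paper's: both arguments show that $[P_K[u]_\mathcal I(v)] \simeq_\mathcal I [u]$ by combining Lemma~\ref{lem: same_sing_type} with \cite[Proposition~2.18(ii)]{DX22}, and then conclude that the two envelopes share the same candidate family. The only cosmetic difference is that you spell out explicitly why $\simeq$-equivalence implies $\simeq_\mathcal I$-equivalence, whereas the paper leaves this implicit.
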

\begin{proof}
By definition, the right-hand side is the usc regularization of
\[
\sup\left\{h\in \PSH(X,\theta): h|_K\leq v, [h]\preceq_{\mathcal{I}} P_K[u]_\mathcal I(v) \right\}.
\]
By Lemma~\ref{lem: same_sing_type} and \cite[Proposition~2.18(ii)]{DX22}, this expression can be rewritten as
\[
\sup\left\{h\in \PSH(X,\theta): h|_K\leq v, [h]\preceq_{\mathcal{I}} [u] \right\}.
\]
The usc regularization of the latter expression is just $P_K[u]_\mathcal I(v)$.
\end{proof}

\begin{lemma}\label{lma:PKoutsidepps}
Let $u\in \PSH(X,\theta)$ be a potential with positive mass. Let $v\in C^0(K)$. Let $S\subseteq X$ be a pluripolar set. Let $h\in \PSH(X,\theta)$, $[h]\preceq [u]$. Assume that $h$ has positive mass and $h|_{K\setminus S}\leq v|_{K\setminus S}$, then $h\leq P_K[u](v)$.
\end{lemma}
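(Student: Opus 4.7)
The idea is to perturb $h$ slightly so that it becomes a legitimate candidate in the supremum defining $E_K[u](v)$, then let the perturbation vanish. The core difficulty is that $h$ may strictly exceed $v$ on $K\cap S$, so we must push $h$ down to $-\infty$ on $S$ while keeping the perturbed function in $\PSH(X,\theta)$ and $\preceq_\simeq$-comparable to $u$.

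First I would exploit the positive-mass hypothesis via Proposition~\ref{prop:posvol_dom_kahler_cur} to produce $w \in \PSH(X,\theta)$ with $w \leq h$ and $\theta_w \geq \delta\omega$; equivalently, $w \in \PSH(X,\theta-\delta\omega)$. Since $S$ is pluripolar, the Guedj--Zeriahi characterization of pluripolar sets provides $\varphi \in \PSH(X,\omega)$ with $\varphi \leq 0$ and $S \subseteq \{\varphi = -\infty\}$. Then
\[
\tilde w := w + \tfrac{\delta}{2}\varphi \in \PSH\bigl(X,\theta-\tfrac{\delta}{2}\omega\bigr)\subseteq \PSH(X,\theta)
\]
satisfies $\tilde w \leq w \leq h$, is bounded above on $X$, and is identically $-\infty$ on $S$.

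Next, since $\tilde w$ is bounded above and $v$ is continuous on the compact set $K$, there exists $M>0$ with $\tilde w - v \leq M$ on $K$. For $\varepsilon \in (0,1)$, set
\[
h_\varepsilon := (1-\varepsilon) h + \varepsilon \tilde w - \varepsilon M \in \PSH(X,\theta).
\]
Since $h_\varepsilon \leq h \leq u + C$, we have $[h_\varepsilon]\preceq [u]$. On $K\setminus S$ we estimate $h_\varepsilon \leq (1-\varepsilon)v + \varepsilon \tilde w - \varepsilon M \leq v + \varepsilon(\tilde w - v - M) \leq v$, and on $S$ we have $h_\varepsilon \equiv -\infty$. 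Thus $h_\varepsilon$ is an admissible candidate, so $h_\varepsilon \leq E_K[u](v) \leq P_K[u](v)$. Letting $\varepsilon \searrow 0$, $h_\varepsilon(x) \to h(x)$ at every point $x$ outside the pluripolar set $\{h=-\infty\}\cup\{\tilde w=-\infty\}$, hence $h \leq P_K[u](v)$ off a pluripolar set; standard facts about quasi-psh functions (every $\theta$-psh $f$ satisfies $f(x) = \limsup_{y\to x,\,y\notin P} f(y)$ for any pluripolar $P$, combined with the upper semicontinuity of $P_K[u](v)$) upgrade this to the inequality on all of $X$.

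The main obstacle is the construction of $\tilde w$ in the second paragraph: a general $\theta$-psh minorant of $h$ need not be $-\infty$ on $S$, and directly adding a globally-psh pluripolar potential to $h$ would force us out of $\PSH(X,\theta)$. The positive-mass hypothesis is exactly what rescues the argument, via Proposition~\ref{prop:posvol_dom_kahler_cur}: it buys a slice of Kähler positivity $\delta\omega$ inside the class $\{\theta\}$ in which we have room to absorb the $\omega$-psh potential $\varphi$ detecting $S$. Once $\tilde w$ is in hand, the remainder is convex combination and a routine limit.
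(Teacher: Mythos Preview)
Your proof is correct and follows the same overall strategy as the paper: construct an auxiliary potential $\tilde w\in\PSH(X,\theta)$ with $\tilde w\le h$ and $\tilde w|_S=-\infty$, take convex combinations $(1-\varepsilon)h+\varepsilon\tilde w$ to obtain legitimate candidates for $E_K[u](v)$, and let $\varepsilon\searrow 0$. The difference lies only in how the auxiliary potential is built. The paper takes a Josefson potential $\chi\in\PSH(X,\theta)$ with $S\subseteq\{\chi=-\infty\}$ and then invokes a rooftop existence result (\cite[Lemma~5.1]{DDNL5}) to produce $P(\varepsilon\chi+(1-\varepsilon)V_\theta,h)\le h$; you instead use Proposition~\ref{prop:posvol_dom_kahler_cur} to find a K\"ahler current $\theta_w$ below $\theta_h$ and absorb an $\omega$-psh Josefson potential into the positivity slack $\delta\omega$. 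Your route is arguably more transparent, avoiding the rooftop machinery, though both ultimately rely on results from \cite{DDNL5}. One minor redundancy: since $\tilde w\le h$, on $K\setminus S$ you already have $(1-\varepsilon)h+\varepsilon\tilde w\le h\le v$, so the subtraction of $\varepsilon M$ is unnecessary (but harmless).
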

\begin{proof}
By the global Josefson theorem (\cite[Theorem~7.2]{GZ05}), there is $\chi\in \PSH(X,\theta)$, such that $S\subseteq \{\chi=-\infty\}$.
We claim that we can choose $\chi$ so that $\chi\leq h$. In fact, since $\int_X \theta_h^n>0$, fixing some $\chi$ and $\varepsilon>0$ small enough, we have
\[
\int_X \theta^n_{\varepsilon \chi+(1-\varepsilon)V_\theta}+\int_X \theta_{h}^n>\int_X \theta_{V_\theta}^n\,.
\]
Thus, by \cite[Lemma~5.1]{DDNL5}, we have $P(\varepsilon \chi+(1-\varepsilon)V_\theta,h)\in \PSH(X,\theta)$. Since $P(\varepsilon \chi+(1-\varepsilon)V_\theta,h) \leq \varepsilon \chi$, the claim is proved by replacing $\chi$ with $P(\varepsilon \chi+(1-\varepsilon)V_\theta,h)$.

Fix $\chi\leq h$ as above. For any $\delta\in (0,1)$, we have
\[
(1-\delta)h|_K+\delta \chi|_K\leq v\,,\quad [(1-\delta)h+\delta \chi]\preceq [u]\,.
\]
Hence, $(1-\delta)h+\delta \chi\leq P_K[u](v)\,.$
Letting $\delta \searrow 0$, we conclude that $h\leq P_K[u](v)$.
\end{proof}

\begin{corollary}\label{cor:PKtoPX}
Let $u\in \PSH(X,\theta)$ be a potential with positive mass. Let $v\in C^0(K)$. Then
\[
P_K[u](v)=P_X[u]\left( P_K[V_\theta](v) \right)\,.
\]
\end{corollary}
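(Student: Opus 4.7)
The plan is to prove the two inclusions separately, both resting on Lemma~\ref{lma:PKoutsidepps}.

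For the inequality $P_K[u](v)\leq P_X[u]\bigl(P_K[V_\theta](v)\bigr)$, I would verify that $P_K[u](v)$ itself satisfies the defining properties of the right-hand envelope. The constraint $[P_K[u](v)]\preceq [u]$ is immediate from the definition. For the pointwise bound $P_K[u](v)\leq P_K[V_\theta](v)$ on $X$, I would apply Lemma~\ref{lma:PKoutsidepps} with $u\mapsto V_\theta$ and $h\mapsto P_K[u](v)$: by Lemma~\ref{lem: same_sing_type} we have $[P_K[u](v)]=[P[u]]$, and combined with $u-\sup_X u\leq P[u]$ and \cite[Theorem~1.1]{WN19} this forces $P_K[u](v)$ to have positive mass; the containment $[P_K[u](v)]\preceq [V_\theta]$ is automatic; and $P_K[u](v)\leq v$ on $K$ off a pluripolar set because $P_K[u](v)$ is the usc regularization of $E_K[u](v)$. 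This exhibits $P_K[u](v)$ as a candidate for $E_X[u]\bigl(P_K[V_\theta](v)\bigr)$, giving the desired inequality.

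For the reverse direction, let $h\in \PSH(X,\theta)$ be a candidate on the right, i.e.\ $h\leq P_K[V_\theta](v)$ on $X$ and $[h]\preceq [u]$; my goal is $h\leq P_K[u](v)$. Since $P_K[V_\theta](v)\leq v$ on $K$ off a pluripolar set, the same holds for $h$. When $h$ has positive mass, Lemma~\ref{lma:PKoutsidepps} yields $h\leq P_K[u](v)$ directly. To cover the possibly zero-mass case, I would regularize via the convex combination
\[
h_\varepsilon := (1-\varepsilon) h + \varepsilon u - C_\varepsilon,\qquad C_\varepsilon:=\varepsilon\bigl(\sup_X u - \inf_K v\bigr),\qquad \varepsilon\in(0,1).
\]
The bound $h\leq u+C$ coming from $[h]\preceq [u]$ forces $[h_\varepsilon]\preceq [u]$; multilinearity of the non-pluripolar expansion of $\bigl((1-\varepsilon)\theta_h+\varepsilon\theta_u\bigr)^n$ gives $\int_X \theta_{h_\varepsilon}^n\geq \varepsilon^n\int_X\theta_u^n>0$; and the choice of $C_\varepsilon$ enforces $h_\varepsilon\leq v$ on $K$ off a pluripolar set. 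Lemma~\ref{lma:PKoutsidepps} then gives $h_\varepsilon\leq P_K[u](v)$, and letting $\varepsilon\searrow 0$ (with pointwise convergence $h_\varepsilon\to h$ a.e.\ on the complement of the pluripolar set $\{u=-\infty\}\cup\{h=-\infty\}$, passing to the limit everywhere by upper semicontinuity) recovers $h\leq P_K[u](v)$.

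The main obstacle is the zero-mass case in the $\geq$ direction: positivity of mass is essential for Lemma~\ref{lma:PKoutsidepps}, and the convex combination must be taken with $u$ itself (rather than $V_\theta$, say) in order to preserve the singularity constraint $[h_\varepsilon]\preceq [u]$ that is needed to remain a legitimate candidate for $P_K[u](v)$.
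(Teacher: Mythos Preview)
Your argument for the $\geq$ direction is correct and in fact supplies the detail the paper omits: the paper simply asserts ``as $u$ has positive mass, we can assume that $h$ has positive mass as well,'' and your convex combination $h_\varepsilon=(1-\varepsilon)h+\varepsilon u-C_\varepsilon$ is exactly how one justifies that reduction.

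There is, however, a genuine gap in your $\leq$ direction. You want to exhibit $P_K[u](v)$ as a single candidate for $E_X[u]\bigl(P_K[V_\theta](v)\bigr)$, which requires $[P_K[u](v)]\preceq [u]$. You claim this is ``immediate from the definition,'' but it is not: each individual candidate $h$ satisfies $h\leq u+C_h$, yet the constants $C_h$ need not be uniformly bounded, so the supremum (and its usc regularization) need not satisfy such a bound. Lemma~\ref{lem: same_sing_type} gives only $[P_K[u](v)]=[P[u]]$, and one knows $[u]\preceq [P[u]]$ but in general not the reverse. Indeed, if $[P[u]]\preceq [u]$ held automatically, then Proposition~\ref{prop:PKdependsonmodeltype} (which is proved \emph{using} this corollary) would be a triviality, since its nontrivial direction $P_K[P[u]](v)\leq P_K[u](v)$ would follow by direct comparison of candidate sets.

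The fix is to argue at the level of candidates rather than the envelope itself, which is what the paper calls ``clear'': any $h$ with $[h]\preceq [u]$ and $h|_K\leq v$ automatically satisfies $[h]\preceq [V_\theta]$, hence $h\leq E_K[V_\theta](v)\leq P_K[V_\theta](v)$, so $h$ is a candidate for $E_X[u]\bigl(P_K[V_\theta](v)\bigr)$. Taking sup and usc finishes $\leq$ in one line. This also shows that your appeal to Lemma~\ref{lma:PKoutsidepps} for the inequality $P_K[u](v)\leq P_K[V_\theta](v)$ was unnecessary: that inequality is immediate from inclusion of candidate sets.
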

\begin{proof}
It is clear that $P_K[u](v) \leq P_X[u]\left( P_K[V_\theta](v) \right)$. For the reverse direction, it suffices to prove that any $h\in \PSH(X,\theta)$ such that $[h]\preceq [u]$, $h\leq P_K[V_\theta](v)$ satisfies $h\leq P_K[u](v)$. As $u$ has positive mass, we can assume that $h$ has positive mass as well. Let $S=\{P_K[V_\theta](v)>E_K[V_\theta](v)\}$. By \cite[Theorem~7.1]{BT82}, $S$ is a pluripolar set. Observe that $h|_{K\setminus S}\leq v|_{K\setminus S}$, hence by Lemma~\ref{lma:PKoutsidepps}, $h\leq P_K[u](v)$ and we conclude.
\end{proof}

The next result motivates our terminology to call the measures $\theta_{P_K[u](v)}^n$ the \emph{partial equilibrium measures} of our context:

\begin{lemma}\label{lma:balayage}
Let $v\in C^0(K)$.
Let $u\in \PSH(X,\theta)$. 
Then $\theta_{P_K[u](v)}^n$ does not charge $X\setminus K$. Moreover, $P_K[u](v)|_K=v$ a.e. with respect to $\theta_{P_K[u](v)}^n$. More precisely, we have
\begin{equation}\label{eq:thetaPKuv}
\theta_{P_K[u](v)}^n\leq \mathds{1}_{K\cap \{P_K[u](v)=P_K[V_\theta](v)=v\}}\,\theta_{P_K[V_\theta](v)}^n\,.    
\end{equation}
\end{lemma}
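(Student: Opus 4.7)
Since the right-hand side of \eqref{eq:thetaPKuv} is supported on $K\cap \{P_K[u](v)=v\}$, that inequality immediately implies the two earlier assertions, so my plan is to focus on proving \eqref{eq:thetaPKuv}. I may assume $\int_X \theta_u^n>0$: otherwise, by Lemma~\ref{lem: same_sing_type} and \cite[Theorem~1.1]{WN19} we have $\int_X\theta_{P_K[u](v)}^n=\int_X\theta_{P[u]}^n=\int_X\theta_u^n=0$, so $\theta_\psi^n$ vanishes and the inequality is trivial. Set $\psi := P_K[u](v)$ and $\varphi := P_K[V_\theta](v)$. From $[u]\preceq [V_\theta]$ one has $\psi\leq \varphi$, and by Corollary~\ref{cor:PKtoPX} we can write $\psi = P_X^\theta[u](\varphi)$.

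The first step is a balayage argument to show that $\theta_\varphi^n$ is carried by $K\cap \{\varphi\geq v\}$. By Lemma~\ref{lem: same_sing_type}, $\varphi$ has the minimal singularity type $[V_\theta]$, so it is locally bounded off a pluripolar set. If $\theta_\varphi^n$ had positive mass either on a relatively compact open ball $B\subset X\setminus K$, or on an open piece of $\{\varphi<v-2\delta\}\cap K$ (constructed using continuity of $v$), one could solve a local Dirichlet problem for the Monge--Amp\`ere equation on $B$ and use the resulting maximal local envelope to produce a \emph{bounded} modification of $\varphi$ that strictly increases it on a set of positive measure, while still respecting the boundary constraint $\leq v$ on $K$ and the singularity type constraint $[\,\cdot\,]\preceq[V_\theta]$. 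This contradicts the maximality of $E_K[V_\theta](v)$ — with Lemma~\ref{lma:PKoutsidepps} absorbing the pluripolar gap between $E_K[V_\theta](v)$ and its usc regularization $\varphi$.

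The second step is the orthogonality relation $\theta_\psi^n\leq \mathds 1_{\{\psi=\varphi\}}\,\theta_\varphi^n$, which is the singular analogue of the standard envelope-orthogonality as in \cite[Theorem~3.8]{DDNL2}. It splits into two sub-claims: (i) $\theta_\psi^n$ does not charge $\{\psi<\varphi\}$, proved by the same local perturbation idea as in Step~1, the key point being that any \emph{bounded} local modification preserves the singularity-type bound $[h]\preceq[u]$ required in the definition of $P_X^\theta[u](\varphi)$; and (ii) on the coincidence set $\{\psi=\varphi\}$ one has $\theta_\psi^n\leq \theta_\varphi^n$, which is the standard monotonicity of non-pluripolar Monge--Amp\`ere measures at contact points for comparable $\theta$-psh functions.

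Combining both steps yields $\theta_\psi^n\leq \mathds 1_{\{\psi=\varphi\}\cap K\cap\{\varphi\geq v\}}\,\theta_\varphi^n$. Since $\psi|_K\leq v$ off a pluripolar set (from the candidate condition in $E_K[u](v)$ together with Lemma~\ref{lma:PKoutsidepps}), and pluripolar sets are invisible to $\theta_\varphi^n$, on the intersection above one has $\psi=\varphi=v$, giving \eqref{eq:thetaPKuv}. \emph{The main obstacle} is Step~1: while Bedford--Taylor balayage is classical for locally bounded psh functions, here one must handle the big-class setting (where $V_\theta$, and hence $\varphi$, may be $-\infty$ on a pluripolar set) and reconcile the local perturbation with the upper semicontinuous regularization defining $\varphi$. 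The rest is routine once one accepts the orthogonality principle for singular envelopes from \cite{DDNL2,DX22}.
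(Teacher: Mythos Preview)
Your proposal is correct and follows essentially the same two-step strategy as the paper: first establish the balayage result for $u=V_\theta$ (your Step~1), then reduce the general case via Corollary~\ref{cor:PKtoPX} and the envelope orthogonality \cite[Theorem~3.8]{DDNL2} (your Step~2). The only notable difference is in the execution of Step~1: the paper approximates $\varphi=P_K[V_\theta](v)$ from below via Choquet's lemma and balayages each approximant, whereas you balayage $\varphi$ itself and invoke Lemma~\ref{lma:PKoutsidepps} to absorb the pluripolar discrepancy between $E_K[V_\theta](v)$ and its usc regularization; both routes work, though in your version you should make explicit that when $B$ meets $K$ the maximum principle (applied to the boundary data $\varphi|_{\partial B}<v(x)-\varepsilon$) is what guarantees the balayaged function still satisfies $\leq v$ on $B\cap K$. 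Your Step~2 sketch is more elaborate than needed --- the paper simply cites \cite[Theorem~3.8]{DDNL2} directly --- and your appeal to Lemma~\ref{lma:PKoutsidepps} in the final paragraph is a misattribution (the fact that $\psi|_K\leq v$ off a pluripolar set follows just from $\psi=\usc E_K[u](v)$ and \cite[Theorem~7.1]{BT82}).
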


\begin{proof}
First we address the case when $u = V_\theta$.

Let $S\subseteq X$ be a closed pluripolar set, such that $V_{\theta}$ is locally bounded on $X\setminus S$. 

For the first assertion, it suffices to show that $\theta_{P_K[V_\theta](v)}^n$ does not charge any open ball $B\Subset X\setminus (S \cup K)$. 

By Choquet's lemma, we can take an increasing sequence $h_j\in \PSH(X,\theta)$ converging to $P_K[V_{{\min}}](v)$ a.e. and  $h_j|_K \leq v$. By \cite[Proposition~9.1]{BT82}, we can find $w_j\in \PSH(X,\theta)$, such that $(\theta + \ddc w_j|_B)^n=0$ and $w_j$ agrees with $h_j$ outside $B$. Note that $w_j$ is clearly increasing and $w_j\geq h_j$, along with $w_j|_K \leq v$. It follows that $w_j$ converges to $P_K[V_\theta](v)$ as well. By continuity of the Monge--Ampère operator along increasing bounded sequences \cite[Theorem~2.3]{DDNL2}, we find that $\theta^n_{P_K[V_\theta](v)}$ does not charge $B$, as desired.

For the second assertion, let $x\in (X\setminus S) \cap K $ be a point such that $P_K[V_\theta](v)(x)<v(x)-\varepsilon$ for some $\varepsilon>0$. Let $B$ be a ball centered at $x$, small enough so that $\theta$ has a local potential on $B$, allowing us to identify $\theta$-psh functions with psh functions (on $B$).
By shrinking $B$, we can further guarantee
\begin{enumerate}
    \item $\overline B\subseteq X\setminus S$.
    \item $P_K[V_\theta](v)|_{\overline B}<v(x)-\varepsilon$.
    \item $v|_{\overline B\cap K}>v(x)-\varepsilon$.
\end{enumerate}
Construct the sequences $h_j$, $w_j$ as above. On $B$, by choosing a local potential of $\theta$, we may identify $h_j$, $w_j$ with the corresponding psh functions in a neighborhood of  $\overline B$. 
By 2. we have $w_j\leq v(x)-\varepsilon$ on $\partial B$, hence by the comparison principle, $w_j|_B\leq v(x)-\varepsilon$. By 3. we have $w_j|_{B\cap K}\leq v|_{B\cap K}$. Thus, we conclude that $\theta^n_{P_K[V_\theta](v)}$ does not charge $B$, as in the previous paragraph.

For the general case, we can assume $\int_X \theta_u^n >0$. Indeed, due to Lemma~\ref{lem: same_sing_type}, we have that $\int_X \theta_{P_K[u](v)}^n = \int_X \theta_u^n,$ hence there is nothing to prove if $\int_X \theta_u^n =0$. By Corollary~\ref{cor:PKtoPX}, $P_K[u](v) = P_X[u](P_K[V_\theta](v))$.
Now \cite[Theorem~3.8]{DDNL2} gives 
\[
\theta_{P_K[u](v)}^n \leq \mathds{1}_{\{P_K[u](v) = P_K[V_\theta](v)\}} \theta_{P_K[V_\theta](v)}^n \leq \mathds{1}_{\{P_K[u](v) = v\}} \theta_{P_K[V_\theta](v)}^n\,,
\]
where in the last inequality we have used the first part of the argument.
\end{proof}

\begin{corollary}\label{cor:suppthetan}
Let $v\in C^0(K)$.
Let $u\in \PSH(X,\theta)$. Then $\theta^n_{P_K[u](v)}$ (resp. $\theta^n_{P_K[u]_{\mathcal{I}}(v)}$) does not charge $(X\setminus K)\cup \{P_K[u](v)<v\}$ (resp. $(X\setminus K)\cup \{P_K[u]_{\mathcal{I}}(v)<v\}$).
\end{corollary}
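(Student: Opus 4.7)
The first assertion, concerning $P_K[u](v)$, is essentially free from the work already done. Indeed, inequality \eqref{eq:thetaPKuv} in Lemma~\ref{lma:balayage} shows that $\theta_{P_K[u](v)}^n$ is dominated by an indicator supported in $K \cap \{P_K[u](v) = v\}$, which is disjoint from $(X \setminus K) \cup \{P_K[u](v) < v\}$. So the plan is simply to quote Lemma~\ref{lma:balayage} for the non-$\mathcal I$ part.

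The main task is the $\mathcal I$-version. Set $\phi := P_K[u]_{\mathcal{I}}(v)$. If $\int_X \theta_\phi^n = 0$ the claim is vacuous, so I will assume $\phi$ has positive mass. The strategy is to reduce to the non-$\mathcal I$ case by showing the identity $\phi = P_K[\phi](v)$, so that Lemma~\ref{lma:balayage} applies directly to $\phi$ and finishes the argument.

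For the inequality $P_K[\phi](v) \leq \phi$: any candidate $h$ for $P_K[\phi](v)$ satisfies $[h] \preceq [\phi]$, hence $[h] \preceq_{\mathcal I} [\phi]$, and $h|_K \leq v$; thus $h$ is also a candidate for $P_K[\phi]_{\mathcal I}(v)$, which equals $\phi$ by the projectivity property in Corollary~\ref{cor:projectivity}. For the reverse inequality $\phi \leq P_K[\phi](v)$, the key observation is that $\phi = E_K[u]_{\mathcal I}(v)$ outside a pluripolar set $S$, so $\phi|_{K \setminus S} \leq v|_{K\setminus S}$. Since $[\phi] \preceq [\phi]$ trivially and $\phi$ has positive mass, Lemma~\ref{lma:PKoutsidepps} (applied with $u=h=\phi$) yields $\phi \leq P_K[\phi](v)$.

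The delicate step — really the only one with any content beyond the formalism — is verifying that $\phi$ satisfies the hypotheses of Lemma~\ref{lma:PKoutsidepps} with $[h]\preceq[\phi]$: this is the bridge from the a.e.\ inequality $\phi \leq v$ on $K$ (which the usc regularization only guarantees off a pluripolar set) to a genuine inequality $\phi \leq P_K[\phi](v)$ on all of $X$. Once $\phi = P_K[\phi](v)$ is in hand, applying the first part of the corollary to $P_K[\phi](v)$ immediately shows that $\theta_\phi^n$ does not charge $(X \setminus K) \cup \{\phi < v\}$, completing the proof.
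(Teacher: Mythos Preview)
Your proof is correct and follows essentially the same strategy as the paper's: reduce to positive mass, identify $\phi := P_K[u]_{\mathcal I}(v)$ with a non-$\mathcal I$ envelope via Lemma~\ref{lma:PKoutsidepps} and the pluripolar exception set, then invoke the first part. The only cosmetic difference is that the paper proves $\phi = P_K[P[u]_{\mathcal I}](v)$ (using Lemma~\ref{lem: same_sing_type} to get $[\phi]\preceq[P[u]_{\mathcal I}]$), whereas you prove $\phi = P_K[\phi](v)$ directly; your version is slightly more self-contained since it avoids bringing $P[u]_{\mathcal I}$ back into the picture and uses the trivial $[\phi]\preceq[\phi]$ instead.
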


\begin{proof}
The first part of the corollary follows from  Lemma~\ref{lma:balayage}. For the second part, we can assume that $\int_X \theta^n_{P_K[u]_{\mathcal{I}}(v)}>0$, otherwise there is nothing to prove. By definition, we have
$P_K[u]_{\mathcal{I}}(v)=P_K[P[u]_{\mathcal{I}}]_{\mathcal{I}}(v)$. 

Next we show that $P_K[P[u]_{\mathcal{I}}]_{\mathcal{I}}(v)=P_K[P[u]_{\mathcal{I}}](v)$. The inequality $P_K[P[u]_{\mathcal{I}}]_{\mathcal{I}}(v)\geq P_K[P[u]_{\mathcal{I}}](v)$ is trivial. By Lemma \ref{lem: same_sing_type} we get that $[P_K[P[u]_{\mathcal{I}}]_{\mathcal{I}}(v)]=[P[u]_{\mathcal{I}}]$. Due to Choquet's lemma, we get that $P_K[P[u]_{\mathcal{I}}]_{\mathcal{I}}(v) \leq v$ on $K \setminus S$, where $S$ is pluripolar. 
As a result, due to the non-vanishing mass assumption, Lemma \ref{lma:PKoutsidepps} allows to conclude that $P_K[P[u]_{\mathcal{I}}]_{\mathcal{I}}(v)\leq P_K[P[u]_{\mathcal{I}}](v)$.

Since $P_K[P[u]_{\mathcal{I}}]_{\mathcal{I}}(v)=P_K[u]_{\mathcal{I}}(v)$, we get that $\theta^n_{P_K[u]_{\mathcal{I}}(v)}$ does not charge  $(X\setminus K)\cup \{P_K[u]_{\mathcal{I}}(v)<v\}$, using the first part of the corollary.
\end{proof}

\begin{proposition}\label{prop:PKdependsonmodeltype}
Let $u\in \PSH(X,\theta)$ be a potential with positive mass. Let $v\in C^0(K)$. Then
\begin{equation}\label{eq: interm_eq}
    P_K[u](v)=P_K[P[u]](v)\,.
\end{equation}
In particular, $P_K[u](v) = P_K[P_K[u](v)](v)$.
\end{proposition}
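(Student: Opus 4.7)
My plan is to prove the equality by two inequalities and then deduce the ``In particular'' statement by one iteration of the main identity. The inequality $P_K[u](v)\leq P_K[P[u]](v)$ is immediate: since $P[u]\geq u-\sup_X u$ by the very construction of the model envelope, we have $[u]\preceq [P[u]]$, so any candidate $w\in \PSH(X,\theta)$ in the supremum defining $P_K[u](v)$ (i.e.\ $w|_K\leq v$ and $[w]\preceq [u]$) is automatically a candidate for $P_K[P[u]](v)$; the inequality follows after passing to the upper semi-continuous regularization.

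For the reverse inequality, set $u_1:=P_K[P[u]](v)$ and $u_2:=P_K[u](v)$, and apply the domination principle. By Lemma~\ref{lem: same_sing_type} both potentials have singularity type $[P[u]]$, and \cite[Theorem~1.1]{WN19} then gives $\int_X \theta_{u_2}^n=\int_X \theta_{P[u]}^n=\int_X \theta_u^n>0$, so $u_2$ has positive mass. Corollary~\ref{cor:suppthetan} implies that $\theta_{u_2}^n$ is concentrated on $K\cap \{u_2=v\}$. On the other hand, by Choquet's lemma $u_1\leq v$ on $K$ outside a pluripolar set, and such sets are negligible for the non-pluripolar Monge--Amp\`ere measure. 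Combining these observations, $u_1\leq v=u_2$ holds a.e.\ with respect to $\theta_{u_2}^n$. Since $u_1-u_2$ is bounded and $\int_X \theta_{u_2}^n>0$, the domination principle in the big class setting then yields $u_1\leq u_2$ on $X$. The main obstacle lies precisely in this step: one must marry the singularity-type equality $[u_1]=[u_2]$ with the delicate support information for $\theta_{u_2}^n$ in order to feed the domination principle, and both ingredients rely essentially on the positive mass assumption on $u$.

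For the ``In particular'' statement, I set $\tilde u:=P_K[u](v)$. Lemma~\ref{lem: same_sing_type} gives $[\tilde u]=[P[u]]$; since the model envelope $P[\cdot]$ depends only on the singularity type and $P[u]$ is itself model, we obtain $P[\tilde u]=P[P[u]]=P[u]$. The same computation as above shows $\int_X \theta_{\tilde u}^n=\int_X \theta_u^n>0$, so $\tilde u$ satisfies the hypothesis of the proposition. Applying the main equality first to $\tilde u$ and then to $u$, I obtain
\[
P_K[P_K[u](v)](v)=P_K[\tilde u](v)=P_K[P[\tilde u]](v)=P_K[P[u]](v)=P_K[u](v)\,,
\]
as desired.
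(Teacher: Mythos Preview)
Your proof is correct and follows essentially the same approach as the paper: both arguments establish the nontrivial inequality $P_K[P[u]](v)\leq P_K[u](v)$ via the domination principle \cite[Corollary~3.10]{DDNL2}, using Lemma~\ref{lem: same_sing_type} for the equality of singularity types and Lemma~\ref{lma:balayage} (through Corollary~\ref{cor:suppthetan}) for the support of $\theta_{P_K[u](v)}^n$. The only minor difference is in how one verifies $P_K[P[u]](v)\leq v$ on the support of $\theta_{P_K[u](v)}^n$: the paper exploits the sharper estimate \eqref{eq:thetaPKuv}, which gives $P_K[u](v)=P_K[V_\theta](v)=v$ on that support, and then uses the sandwich $P_K[u](v)\leq P_K[P[u]](v)\leq P_K[V_\theta](v)$; you instead invoke Choquet's lemma and the pluripolarity of $\{E_K[P[u]](v)<P_K[P[u]](v)\}$ \cite[Theorem~7.1]{BT82}, which is equally valid but slightly less direct.
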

\begin{proof}
It is obvious that $P_K[u](v)\leq P_K[P[u]](v)$. We to prove the reverse inequality. As $P_K[u](v)$ and $P_K[P[u]](v)$ have the same singularity types (Lemma~\ref{lem: same_sing_type}), by the domination principle \cite[Corollary~3.10]{DDNL2}, it suffices to show that $P_K[u](v)\geq P_K[P[u]](v)$ a.e. with respect to $\theta_{P_K[u](v)}^n$.
By \eqref{eq:thetaPKuv}, $P_K[u](v)=P_K[V_\theta](v)=v$ a.e. with respect to $\theta_{P_K[u](v)}^n$. Hence, $P_K[P[u]](v)=v$ a.e. with respect to $\theta_{P_K[u](v)}^n$. We conclude that $P_K[u](v)=P_K[P[u]](v)$.

Finally, that $P_K[u](v) = P_K[P_K[u](v)](v)$ follows from Lemma \ref{lem: same_sing_type} and \eqref{eq: interm_eq}.
\end{proof}

\begin{lemma} \label{lem: global_env_approx} Fix a Kähler form $\omega$ on $X$.
For $v\in C^0(K)$ there exists an increasing bounded sequence $\{v^-_j\}_j$ in $C^\infty(X)$ and a decreasing bounded sequence $\{v^+_j\}_j$ in $C^\infty(X)$, such that for all $u \in \PSH(X,\theta)$ with $\int_X \theta_u^n>0$, and $\delta\in [0,1]$ we have: \vspace{0.1cm}\\
\noindent (i) $P_X^{\theta+\delta \omega}[u](v_j^+) \searrow P_K^{\theta+\delta \omega}[u](v)$. \vspace{0.1cm}\\
\noindent (ii) $P_X^{\theta+\delta \omega}[u](v^-_j) \nearrow P_K^{\theta+\delta \omega}[u](v)$ a.e. \vspace{0.1cm}\\
\noindent (iii) $\sup_X |v_j^-|\leq C(\|v\|_{C^0(K)},K,\theta+\omega)$ and $\sup_X |v_j^+| \leq C(\|v\|_{C^0(K)},K,\theta+\omega)$.
\end{lemma}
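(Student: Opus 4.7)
I would construct both sequences explicitly from a Tietze extension of $v$, carefully interpolated near $K$, and run a sandwich argument using the uniform $L^\infty$ bound provided by \cref{lem: same_sing_type}.

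Setup. Take a Tietze extension $\tilde v\in C(X)$ of $v$ with $\|\tilde v\|_{C^0(X)}\leq \|v\|_{C^0(K)}$. Let $C_K=C_K(K,\theta+\omega)$ be the constant from \cref{lem: same_sing_type} applied to the class $\theta+\omega$: any $w\in\PSH(X,\theta+\omega)$ with $w|_K\leq m$ satisfies $w\leq m+C_K$ on $X$. Put $M:=\|v\|_{C^0(K)}+C_K$. Since $\PSH(X,\theta+\delta\omega)\subseteq\PSH(X,\theta+\omega)$ for $\delta\in[0,1]$, every candidate $w$ of $P_K^{\theta+\delta\omega}[u](v)$ satisfies $w\leq M$ on $X$. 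Choose a shrinking family of open neighborhoods $U_j\searrow K$ and smooth cutoffs $\chi_j\in C^\infty(X,[0,1])$ with $\chi_j|_K=0$, $\chi_j\equiv 1$ off $U_j$, and $\chi_j\nearrow\mathbf{1}_{X\setminus K}$ pointwise (obtained by smoothing the distance function $d(\cdot,K)$). Fix positive sequences $a_j,b_j\searrow 0$ and define
\[
 v_j^+:=\tilde v(1-\chi_j)+(M+1)\chi_j+a_j,\qquad v_j^-:=\tilde v(1-\chi_j)+M\chi_j-b_j.
\]
Choosing $a_j$, $b_j$ and the sequence $\chi_j$ compatibly (e.g.\ $a_j,b_j$ decaying faster than the step of $\chi_j$) makes $\{v_j^+\}$ strictly decreasing and $\{v_j^-\}$ strictly increasing. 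Both are smooth, and $\sup_X|v_j^\pm|\leq 2\|v\|_{C^0(K)}+C_K+1$, yielding (iii).

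Proof of (i). Let $w$ be a candidate of $P_K^{\theta+\delta\omega}[u](v)$, so $w\leq M$ globally and $w\leq v$ on $K$. By upper semi-continuity of $w$ combined with continuity of $v$ on $K$, for each $\varepsilon>0$ there is a neighborhood $U\supset K$ with $w(x)\leq \tilde v(x)+\varepsilon$ for $x\in U$. Pick $j$ large enough that $U_j\subset U$ and $a_j\geq 2\varepsilon$: on $K$ one has $w\leq v=\tilde v\leq v_j^+$; on $X\setminus U_j$, $w\leq M<(M+1)+a_j=v_j^+$; on the transition zone $U_j\setminus K$, $w\leq \tilde v+\varepsilon\leq \tilde v+a_j/2\leq v_j^+$. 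Hence $w\leq P_X^{\theta+\delta\omega}[u](v_j^+)$ for all sufficiently large $j$, so $w\leq \lim_j P_X^{\theta+\delta\omega}[u](v_j^+)$; taking the supremum over $w$ gives $P_K^{\theta+\delta\omega}[u](v)\leq \lim_j P_X^{\theta+\delta\omega}[u](v_j^+)$. The reverse inequality is immediate from $v_j^+|_K\searrow v$: any candidate of $P_X^{\theta+\delta\omega}[u](v_j^+)$ is $\leq v+a_j$ on $K$, whence its decreasing limit lies $\leq v$ on $K$ and is thus a candidate of $P_K^{\theta+\delta\omega}[u](v)$.

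Proof of (ii). Run the analogous sandwich, but now with an $\varepsilon$-shift to absorb the fact that $v_j^-|_K<v$. For any candidate $w$ and any $\varepsilon>0$, the usc argument above shows that for $j$ large enough (depending on $w$ and $\varepsilon$) one has $w-\varepsilon\leq v_j^-$ on all of $X$ (on $K$: $w-\varepsilon\leq v-\varepsilon\leq v-b_j$ once $b_j\leq\varepsilon$; on $X\setminus U_j$: $w-\varepsilon\leq M-\varepsilon\leq M-b_j$; on $U_j\setminus K$: $w\leq \tilde v+\varepsilon/3$ and $b_j\leq\varepsilon/3$, giving $w-\varepsilon\leq \tilde v-b_j\leq v_j^-$). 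Hence $w-\varepsilon\leq P_X^{\theta+\delta\omega}[u](v_j^-)\leq \usc\sup_j P_X^{\theta+\delta\omega}[u](v_j^-)$; letting $\varepsilon\to 0$ and then taking sup over $w$ yields $P_K^{\theta+\delta\omega}[u](v)\leq \usc\sup_j P_X^{\theta+\delta\omega}[u](v_j^-)$, while $v_j^-|_K\leq v$ supplies the reverse inequality. The "a.e." qualifier records that this supremum of $\theta$-psh functions must be upper semi-continuously regularized, which alters the pointwise limit only on a pluripolar set.

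The main technical obstacle is ensuring the strict monotonicity of $\{v_j^+\}$ and $\{v_j^-\}$ in $C^\infty(X)$ while preserving the correct pointwise behavior across $K$ and inside the transition zones $U_j\setminus K$; this is handled by coordinating the decay of $a_j,b_j$ with the rate at which $\chi_j$ changes. The rest of the proof leans essentially on the upper semi-continuity of quasi-plurisubharmonic candidates (which makes the transition zone estimate $w\leq \tilde v+\varepsilon$ automatic for $j$ large) and the uniform bound $M$ coming from \cref{lem: same_sing_type}.
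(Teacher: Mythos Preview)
Your construction of $v_j^-$ is essentially the paper's: both take a smooth increasing approximation to an lsc function equal to $v$ on $K$ and equal to a large constant off $K$, and your sandwich argument for (ii) is correct.

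The gap is in (i). Your $v_j^+=\tilde v+(M+1-\tilde v)\chi_j+a_j$ cannot be made decreasing. Since $M+1-\tilde v>0$ and $\chi_j$ is increasing, the term $(M+1-\tilde v)\chi_j$ increases, so monotonicity of $v_j^+$ would force $a_j-a_{j+1}\ge c\,\|\chi_{j+1}-\chi_j\|_\infty$ with $c=\sup_X(M+1-\tilde v)>0$; summing gives $a_1\ge c\sum_j\|\chi_{j+1}-\chi_j\|_\infty$, so $\chi_j$ would converge uniformly. But its pointwise limit $\mathbf 1_{X\setminus K}$ is discontinuous on the connected manifold $X$, a contradiction. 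This is not a matter of ``coordinating the decay'': no choice of $a_j\searrow 0$ and $\chi_j\nearrow\mathbf 1_{X\setminus K}$ works. The same obstruction shows up in your argument for $w\le v_j^+$: you ask for $U_j\subset U$ (forcing $j$ large) together with $a_j\ge 2\varepsilon$ (forcing $j$ small), and these are incompatible.

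The paper's remedy is to change the target of the decreasing approximation. Instead of aiming at an lsc obstacle, it sets $h:=\max\big(P_K^{\theta+\omega}[V_{\theta+\omega}](v),\,\inf_K v-1\big)$, which is \emph{usc}, so a decreasing smooth sequence $v_j^+\searrow h$ exists for free. The point is that every candidate $w$ for $P_K^{\theta+\delta\omega}[u](v)$ is already a candidate for $P_K^{\theta+\omega}[V_{\theta+\omega}](v)$, hence $w\le h\le v_j^+$ on all of $X$ and for all $j$; this gives $P_K^{\theta+\delta\omega}[u](v)\le P_X^{\theta+\delta\omega}[u](v_j^+)$ without any transition-zone estimate. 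The reverse inequality uses that $h\le v$ on $K$ away from a pluripolar set, together with Lemma~\ref{lma:PKoutsidepps}. Your cutoff approach cannot produce such an $h$, because it insists on a continuous interpolation between $v$ on $K$ and a large constant off $K$.
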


\begin{proof} 
We fix $\delta \in [0,1]$.  First we prove the existence of $\{v^-_j\}_j$. Let 
\[
C_{K,v} := \sup\{\sup_X w:w\in \PSH(X,\theta+\omega),  w|_K \leq v\}\,.
\]
Since $K$ is non-pluripolar, we have that $C_{K,v} \in \mathbb R$.
Now let  $\tilde v: X \to \mathbb R$ so that $\tilde v|_K = v$ and $\tilde v|_{X \setminus K} = C_{K,v}+1$.
Since $\tilde v$ is lsc, there exists an increasing and uniformly bounded sequence $\{v^-_j\}_j$ in $C^\infty(X)$, such that $v^-_j \nearrow \tilde v$.

Observe that $P^{\theta + \delta \omega}_X[u](v^-_j)$ is increasing in $j$, and $P^{\theta + \delta \omega}_X[u](v^-_j)\leq P^{\theta + \delta \omega}_K[u](v)$.
To prove that $P^{\theta + \delta \omega}_X[u](v^-_j) \nearrow P^{\theta + \delta \omega}_K[u](v)$ a.e., let $w$ be a  candidate for $P^{\theta + \delta \omega}_K[u](v)$ such that $\sup_K (w-v) < 0$. Then, since $w$ is usc  and $w < \tilde v$, by Dini's lemma there exists $j_0$ such that $w < v^-_j$ for $j \geq j_0$, i.e. $w \leq P^{\theta + \delta \omega}_X[u](v^-_j)$, proving existence of $\{v^-_j\}_j$.

Next, we prove the existence of $\{v^+_j\}_j$.
Since $h : = \max(P^{\theta + \omega}_K[V_{\theta+\omega}](v),\inf_K v -1)$ is usc, there exists a decreasing and uniformly bounded sequence $\{v^+_j\}_j$ in  $C^\infty(X)$, such that $v^+_j \searrow h$. 
Trivially, $\chi := \lim_{j\to\infty} P^{\theta + \delta \omega}_X[u](v^+_j) \geq P^{\theta + \delta \omega}_K[u](v)$. In particular, $\chi$ has positive mass, since it has the same singularity types as $P^{\theta + \delta \omega}_K[u](v)$ (Lemma \ref{lem: same_sing_type}). We introduce
\[
S:=\left\{E^{\theta + \omega}_K[V_{\theta+\omega}](v)<P^{\theta + \omega}_K[V_{\theta+\omega}](v)\right\}\,.
\]
By \cite[Theorem~7.1]{BT82},  $S$ is a pluripolar set. 
Observe that $P^{\theta + \delta \omega}_X[u](v_j^+)\leq v_j^+$ for all $j$. Thus, $\chi\leq h$. On the other hand, $h\leq v$ on $K\setminus S$. So in particular, $\chi|_{K\setminus S}\leq v|_{K\setminus S}$. By Lemma \ref{lem: same_sing_type} we also have that $[\chi] = [P_K^{\theta + \delta \omega}[u](v)]$. Hence, by Lemma~\ref{lma:PKoutsidepps}, $\chi\leq P^{\theta + \delta \omega}_K[P^{\theta + \delta \omega}_K[u](v)](v)=P^{\theta + \delta \omega}_K[u](v)$, where we also used the last statement of Proposition \ref{prop:PKdependsonmodeltype}.
\end{proof}
We recall the relative Monge--Amp\`ere energy  $I_{[u]}^{\theta} : \mathcal{E}^{1}(X,\theta;P[u]) \to \mathbb R$ from \cite{DDNL2}:
\begin{equation}
I_{[u]}^{\theta}(\varphi):=\frac{1}{n+1}\sum_{i=0}^n \int_X (\varphi-P[u]) \,\theta_{\varphi}^i\wedge \theta_{P[u]}^{n-i}\,.
\end{equation}
Using integration by parts (see \cite{Xia19b}, \cite[Theorem 1.2]{Lu21}, \cite[Theorem 2.6]{Vu20}, c.f. \cite[Theorem 1.14]{BEGZ10}), the argument of \cite[Corollary~4.2]{BB10} can be reproduced line by line to yield the following cocycle property: for $\varphi_1,\varphi_2\in \mathcal{E}^{1}(X,\theta;P[u]) $ such that $[\varphi_i]=[P[u]]$ for $i=1,2$, we have
\begin{equation}\label{eq:cocy}
    I_{[u]}^{\theta}(\varphi_1)-I_{[u]}^{\theta}(\varphi_2)=\frac{1}{n+1}\sum_{i=0}^n \int_X (\varphi_1-\varphi_2) \,\theta_{\varphi_1}^i\wedge \theta_{\varphi_2}^{n-i}\,.
\end{equation}

Following \cite{BB10}, we define the \emph{partial equilibrium energy} $\mathcal{I}_{[u],K}^{\theta}$ of  $v \in C^0(K)$:
\begin{equation}
\mathcal{I}_{[u],K}^{\theta}(v):=I_{[u]}^{\theta}(P_K[u](v))\,.
\end{equation}
In \cite{BB10} the authors used the symbol $\mathcal E$ for the above quantity. Due to potential confusion with the notation for (relative) full mass classes (that also uses the symbol $\mathcal E$), we use the symbol $\mathcal I$ instead.

Next we extend \cite[Theorem~B]{BB10}, using the arguments of \cite[Proposition~4.32]{Dar19}, itself reproducing ideas from \cite{LN15}:

\begin{proposition}\label{prop: differential_P} Let $K \subseteq X$ be a closed non-pluripolar set, $v,f \in C^0(K)$ and $u \in \PSH(X,\theta)$ satisfies $\int_X \theta_u^n>0$. 
Then $t \mapsto \mathcal{I}_{[u],K}^{\theta}(v+tf)$, $t \in \mathbb R$ is differentiable and
\begin{equation}\label{eq:ddtI}
\frac{\mathrm{d}}{\mathrm{d}t}\mathcal{I}_{[u],K}^{\theta}(v+tf) = \int_K f\, \theta^n_{P_K[u](v+tf)}\,.
\end{equation}
\end{proposition}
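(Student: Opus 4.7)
The plan is to adapt the strategy of \cite[Theorem~B]{BB10} (cf.\ \cite[Proposition~4.32]{Dar19}) to our $K$-relative setting. Abbreviate $\varphi_s := P_K[u](v+sf) \in \PSH(X,\theta)$; by Lemma~\ref{lem: same_sing_type} each $\varphi_s$ has singularity type $[P[u]]$, so $\int_X \theta_{\varphi_s}^n = \int_X \theta_u^n > 0$ for every $s \in \mathbb R$. First I would establish a uniform Lipschitz bound: since $\varphi_{t+h} - h \sup_K f$ is an admissible candidate in the sup defining $P_K[u](v+tf)$, we obtain $\varphi_{t+h} \leq \varphi_t + h\sup_K f$, and symmetrically $\varphi_{t+h} \geq \varphi_t + h\inf_K f$. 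Hence $|\varphi_{t+h} - \varphi_t| \leq |h|\,\|f\|_{C^0(K)}$ outside a pluripolar set, and in particular $\varphi_{t+h} \to \varphi_t$ in capacity as $h \to 0$.

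Second, combining the cocycle formula \eqref{eq:cocy} with integration by parts (as in \cite{BEGZ10, Xia19b, Lu21, Vu20}), the mixed terms $\int (\varphi_{t+h}-\varphi_t)\theta^i_{\varphi_{t+h}}\wedge \theta^{n-i}_{\varphi_t}$ are monotone in $i$, yielding the standard super-gradient sandwich for the relative energy:
\[
\int_X (\varphi_{t+h} - \varphi_t)\,\theta_{\varphi_{t+h}}^n \;\leq\; I^\theta_{[u]}(\varphi_{t+h}) - I^\theta_{[u]}(\varphi_t) \;\leq\; \int_X (\varphi_{t+h} - \varphi_t)\,\theta_{\varphi_t}^n.
\]
The crucial extra input from the partial setting is Corollary~\ref{cor:suppthetan}: up to a pluripolar set, $\theta_{\varphi_s}^n$ is concentrated on $K \cap \{\varphi_s = v+sf\}$. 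On $K \cap \{\varphi_t = v+tf\}$ one has $\varphi_{t+h} - \varphi_t \leq hf$ (from the $K$-constraint $\varphi_{t+h}|_K \leq v+(t+h)f$), and on $K \cap \{\varphi_{t+h} = v+(t+h)f\}$ one has $\varphi_{t+h} - \varphi_t \geq hf$. Substituting into the sandwich, for $h > 0$,
\[
h \int_K f\,\theta_{\varphi_{t+h}}^n \;\leq\; \mathcal{I}_{[u],K}^\theta(v+(t+h)f) - \mathcal{I}_{[u],K}^\theta(v+tf) \;\leq\; h \int_K f\,\theta_{\varphi_t}^n,
\]
with the inequalities reversed for $h < 0$.

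To finish, extend $f$ to $\tilde f \in C^0(X)$ via Tietze's theorem and note that every $\theta_{\varphi_s}^n$ is supported on $K$ (where $\tilde f = f$), so \eqref{eq:ddtI} will follow once we know $\theta_{\varphi_{t+h}}^n \rightharpoonup \theta_{\varphi_t}^n$ weakly. Since $\varphi_{t+h} \to \varphi_t$ in capacity by Step 1 and the total masses $\int_X \theta_{\varphi_s}^n$ are constant in $s$, the weak convergence follows from the general continuity of non-pluripolar products along sequences with constant singularity type (e.g.\ \cite[Theorem~2.3]{DDNL2}). Dividing the sandwich by $h$ and letting $h \to 0^{\pm}$ then identifies both one-sided derivatives with $\int_K f\,\theta_{\varphi_t}^n$, proving the claim.

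The main obstacle I anticipate is the weak continuity step: the family $\{\varphi_{t+h}\}_h$ is not monotone in $h$ in general, so one cannot directly quote the classical decreasing/increasing continuity theorems. Instead one must exploit the constancy of the singularity type (and hence of the total mass) together with the capacity convergence from Step~1; if a direct citation proves insufficient, one can sandwich $\varphi_{t+h}$ between the monotone envelopes built from $\sup$'s and $\inf$'s of $\{\varphi_{t+s}\}_{|s|\le|h|}$ and reduce to the monotone case.
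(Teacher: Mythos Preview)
Your proposal is correct and follows essentially the same route as the paper: the cocycle formula \eqref{eq:cocy} together with the comparison principle gives the super-gradient sandwich, and the support property of $\theta^n_{\varphi_s}$ (Lemma~\ref{lma:balayage}/Corollary~\ref{cor:suppthetan}) converts the integrals into $h\int_K f\,\theta^n_{\varphi_\bullet}$, after which both sides are squeezed to $\int_K f\,\theta^n_{\varphi_t}$.

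Your anticipated obstacle is not a genuine one: your Lipschitz bound in fact holds \emph{everywhere} (since $P_K[u](v+c)=P_K[u](v)+c$ for constants $c$, comparing $v+tf$ with $v+(t+h)f \pm |h|\,\|f\|_{C^0(K)}$ gives $|\varphi_{t+h}-\varphi_t|\le |h|\,\|f\|_{C^0(K)}$ pointwise), and since all $\varphi_s$ share the singularity type $[P[u]]$ they are locally bounded on the common open set $X\setminus\{P[u]=-\infty\}$, which carries all the non-pluripolar mass; ordinary Bedford--Taylor continuity under uniform convergence on that set then yields $\theta^n_{\varphi_{t+h}}\rightharpoonup\theta^n_{\varphi_t}$ without any monotonicity. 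The paper simply suppresses this step, writing ``together with the above, this implies \eqref{eq: to_prove_1}'' and pointing to \cite[Proposition~4.32]{Dar19}.
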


In this work, we will only need this result in case $[u] \in \mathcal A(X,\theta)$.

\begin{proof} 
Note that it suffices to prove \eqref{eq:ddtI} at $t=0$, which is equivalent to
\begin{equation}\label{eq: to_prove_1}
\lim_{t\to 0}\frac{I_{[u]}^{\theta}(P_K[u](v+tf))-I_{[u]}^{\theta}(P_K[u](v))}{t}=\int_K f\,\theta^n_{P_K[u](v)}\,.
\end{equation}
By switching $f$ to $-f$, we may assume that $t>0$ in the above limit.

By the comparison principle \cite[Proposition~3.5]{DDNL2} and \eqref{eq:cocy}, we find
\[
\begin{aligned}
\mathcal{I}_{[u],K}^{\theta}(v+tf)-\mathcal{I}_{[u],K}^{\theta}(v)
=&\frac{1}{n+1}\sum_{i=0}^n\int_X (P_K[u](v+tf)-P_K[u](v))\,\theta_{P_K[u](v+tf)}^i\wedge\theta_{P_K[u](v)}^{n-i}\\
\leq & \int_X(P_K[u](v+tf)-P_K[u](v))\,\theta_{P_K[u](v)}^n\,.
\end{aligned}
\]
By Lemma~\ref{lma:balayage}, $\int_X(P_K[u](v+tf)-P_K[u](v))\,\theta_{P_K[u](v)}^n\leq t\int_K f\,\theta_{P_K[u](v)}^n\,. $
Thus, we get the inequality,
\[
\varlimsup_{t\to 0+}\frac{I_{[u]}^{\theta}(P_K[u](v+tf))-I_{[u]}^{\theta}(P_K[u](v))}{t}\leq\int_K f\,\theta^n_{P_K[u](v)}\,.
\]
Similarly, we have 
\begin{flalign*}
I_{[u]}^{\theta}(P_K[u](v+tf))-I_{[u]}^{\theta}(P_K[u](v))&\geq \int_X(P_K[u](v+tf)-P_K[u](v))\,\theta_{P_K[u](v+tf)}^n\\
&\geq t\int_K f\,\theta_{P_K[u](v+tf)}^n\,.
\end{flalign*}
Together with the above, this implies \eqref{eq: to_prove_1}.
\end{proof}

In the next lemma, we prove convergence results for the partial equilibrium energy:

\begin{lemma}\label{lma:equiconv}
Let $v\in C^0(K)$ and $u \in \PSH(X,\theta)$ with $\int_X \theta_u^n>0$. Let $v_j^-$, $v_j^+$ be the sequences constructed in Lemma~\ref{lem: global_env_approx}. Then
\[
\lim_{j\to\infty}\mathcal{I}_{[u],X}^{\theta}(v_j^-)=\mathcal{I}_{[u],K}^{\theta}(v)\,,\quad \lim_{j\to\infty}\mathcal{I}_{[u],X}^{\theta}(v_j^+)=\mathcal{I}_{[u],K}^{\theta}(v)\,.
\]
\end{lemma}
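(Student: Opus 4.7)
The plan is to reduce both assertions to continuity of the relative Monge--Ampère energy $I_{[u]}^\theta$ along monotone sequences in a bounded slice of $\mathcal E^1(X,\theta;P[u])$. Write $\varphi := P_K^\theta[u](v)$ and $\varphi_j^\pm := P_X^\theta[u](v_j^\pm)$. By Lemma~\ref{lem: same_sing_type}, all of $\varphi$ and $\varphi_j^\pm$ share the common singularity type $[P[u]]$. Combining Lemma~\ref{lem: global_env_approx}(iii) with the non-pluripolarity of $K$ (which forces a uniform upper bound on $\theta$-psh functions dominated by $v_j^\pm$ on $X$ and by $v$ on $K$), there is a constant $C$ independent of $j$ such that $P[u] - C \leq \varphi_j^\pm, \varphi \leq P[u] + C$ on $X$.

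Since all these potentials have the same singularity type as $P[u]$ and lie in $\mathcal E^1(X,\theta;P[u])$, the cocycle identity \eqref{eq:cocy} applies and yields
\[
\mathcal I_{[u],X}^\theta(v_j^\pm) - \mathcal I_{[u],K}^\theta(v) = \frac{1}{n+1}\sum_{i=0}^n \int_X (\varphi_j^\pm - \varphi)\,\theta_{\varphi_j^\pm}^i \wedge \theta_\varphi^{n-i}.
\]
It therefore suffices to show that, for each fixed $i \in \{0,\dots,n\}$, the integral on the right tends to zero as $j \to \infty$. Since $\{\varphi_j^+\}$ and $\{\varphi_j^-\}$ consist of potentials of singularity type $[P[u]]$, \cite[Theorem~1.1]{WN19} guarantees that each mixed non-pluripolar mass $\int_X \theta_{\varphi_j^\pm}^i \wedge \theta_\varphi^{n-i}$ equals $\int_X \theta_{P[u]}^n$, independently of $j$.

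For the decreasing case, Lemma~\ref{lem: global_env_approx}(i) gives $\varphi_j^+ \searrow \varphi$, hence $(\varphi_j^+ - \varphi) \searrow 0$ uniformly bounded; by continuity of non-pluripolar products along decreasing sequences with constant singularity type (\cite[Theorem~2.3]{DDNL2}), the mixed measures converge weakly to $\theta_\varphi^n$ with preserved mass, and a standard monotone-convergence argument (splitting over the sets $\{\varphi_j^+ - \varphi > \varepsilon\}$ and using total-mass preservation) forces the integral to zero. For the increasing case, Lemma~\ref{lem: global_env_approx}(ii) gives $\varphi_j^- \nearrow \varphi$ a.e., and the analogous convergence statement of \cite[Theorem~2.3]{DDNL2} in the increasing direction, combined with quasi-continuity of $\theta$-psh functions to handle the "a.e." hypothesis, yields the same conclusion. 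The main obstacle is justifying this limit passage rigorously, since both the integrand $\varphi_j^\pm - \varphi$ and the measure $\theta_{\varphi_j^\pm}^i \wedge \theta_\varphi^{n-i}$ vary with $j$; the key input making this work is the invariance of total mass along sequences of the same singularity type, which essentially reduces the problem to a Dini-type argument.
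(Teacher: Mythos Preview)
Your approach is correct and uses the same three ingredients the paper cites in its one-line proof: Lemma~\ref{lem: same_sing_type}, Lemma~\ref{lem: global_env_approx}, and \cite[Theorem~2.3]{DDNL2}. Two remarks. First, the cocycle formula \eqref{eq:cocy} is an unnecessary detour: working directly with the definition $I_{[u]}^\theta(\varphi_j^\pm)=\frac{1}{n+1}\sum_i\int_X(\varphi_j^\pm-P[u])\,\theta_{\varphi_j^\pm}^i\wedge\theta_{P[u]}^{n-i}$ yields an expression of exactly the same type, so nothing is gained. Second, the step you label a ``standard monotone-convergence'' or ``Dini-type'' argument is the only nontrivial point, and weak convergence of the mixed measures together with total-mass preservation is \emph{not} enough on its own, because the integrand $\varphi_j^\pm-\varphi$ is merely quasi-continuous, not continuous. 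What actually closes this gap is precisely \cite[Theorem~2.3]{DDNL2}, which gives convergence of such integrals (bounded differences of quasi-psh functions against non-pluripolar mixed products) along monotone sequences within a fixed singularity type; you should invoke it for that purpose rather than only for weak convergence of the measures.
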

\begin{proof}
This follows from Lemma~\ref{lem: same_sing_type}, Lemma~\ref{lem: global_env_approx} and \cite[Theorem~2.3]{DDNL2}.
\end{proof}

\section{Quantization of partial equilibrium measures}\label{sec:quant}

In this section, we give a proof for Theorem \ref{thm:conv_Bergman_equi_main}. Throughout the section $L \to X$ is a pseudoeffective line bundle and $h$ is a Hermitian metric on $L$ such that $\theta := c_1(L,h)$. Let $T \to X$ be a Hermitian line bundle on $X$ with a smooth Hermitian metric $h_T$. We normalize the Kähler metric $\omega$ on $X$ so that
\[
\int_X \omega^n = 1\,.
\]

\subsection{Bernstein--Markov measures}
Let $K\subseteq X$ be a closed non-pluripolar subset. 
Let $v$ be a measurable function on $K$ and let $\nu$ be a positive Borel probability measure on $K$.
We introduce the following functions on $H^0(X,L^k \otimes T)$, with values possibly equaling $\infty$:
\[
\begin{aligned}
N^k_{v,\nu}(s) :=& \left(\int_K h^k \otimes h_T(s,s) e^{-kv} \,\mathrm{d}\nu\right)^{\frac{1}{2}}\,,\\
N^k_{v,K}(s) :=& \sup_{K\setminus \{v=-\infty\}} \big( h^k \otimes h_T(s,s)e^{-kv}\big)^{\frac{1}{2}}\,.
\end{aligned}
\]

We start with the following elementary observation:
\begin{lemma}\label{lma:mononorm}
Let $v_1\leq v_2$ be two measurable functions on $X$. Assume that $\{v_1=-\infty\}=\{v_2=-\infty\}$.
Then for any $s\in H^0(X,L^k\otimes T)$ and any $k>0$, we have
\[
N_{v_1,K}^k(s)\geq N_{v_2,K}^k(s)\,.
\]
\end{lemma}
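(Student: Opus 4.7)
The plan is to reduce the lemma to the pointwise inequality $e^{-kv_1}\geq e^{-kv_2}$ on $K\setminus\{v_1=-\infty\}$, which is immediate from $v_1\leq v_2$ and the monotonicity of the exponential. Since $h^k\otimes h_T(s,s)\geq 0$, multiplying by this nonnegative continuous quantity preserves the inequality, giving
\[
h^k\otimes h_T(s,s)\,e^{-kv_1}\ \geq\ h^k\otimes h_T(s,s)\,e^{-kv_2}
\]
pointwise on $K\setminus\{v_1=-\infty\}$.

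The only subtle point is matching up the domains over which the supremum is taken in the definition of $N^k_{v_i,K}$. By definition,
\[
N^k_{v_i,K}(s)=\sup_{K\setminus\{v_i=-\infty\}}\bigl(h^k\otimes h_T(s,s)\,e^{-kv_i}\bigr)^{1/2},
\]
and the hypothesis $\{v_1=-\infty\}=\{v_2=-\infty\}$ guarantees that both suprema are taken over the very same subset of $K$. Thus the pointwise inequality passes directly to the supremum, yielding $N^k_{v_1,K}(s)\geq N^k_{v_2,K}(s)$.

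No obstacle is expected: the statement is a one-line monotonicity remark, isolated from any pluripotential input, and the only reason the equality-of-poles hypothesis is stated is precisely to make the two sup-domains coincide. The lemma will be invoked later as a bookkeeping tool when comparing norms built from approximating sequences $v_j^\pm$ against the limiting weight $v$, so the clean formulation above is all that is needed.
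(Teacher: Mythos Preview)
Your argument is correct and matches the paper's treatment: the paper states this lemma as an ``elementary observation'' without proof, and your pointwise monotonicity reasoning is exactly what is intended.
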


If $\nu$ puts no mass on $\{v=-\infty\}$ then we always have
\begin{equation}\label{eq:Nkinfcomp}
N^k_{v,\nu}(s)\leq N^k_{v,K}(s)\,.
\end{equation}

We recall terminology introduced in \cite{BB10}, providing a natural context in which the converse of \eqref{eq:Nkinfcomp} holds, with subexpontential growth.
A \emph{weighted subset} of $X$ is a pair $(K,v)$ consisting of a closed non-pluripolar subset $K\subseteq X$ and a function $v\in C^0(K)$.

Let $(K,v)$ be a weighted subset of $X$.
A positive Borel probability measure $\nu$ on $K$ is \emph{Bernstein--Markov} with respect to $(K,v)$ if  for each $\varepsilon>0$, there is a constant $C_\varepsilon>0$ such that
\begin{equation}\label{eq:BM}
     N^k_{v,K}(s) \leq C_\varepsilon e^{\varepsilon k} N^k_{v,\nu}(s)
\end{equation}
for any $s \in H^0(X,L^k \otimes T)$ and any $k\in \mathbb{N}$.
We write $\BM(K,v)$ for the set of Bernstein--Markov measures with respect to $(K,v)$. As pointed out in \cite{BBWN11}, any volume form measure on $X$ is Bernstein--Markov with respect to $(X,v)$, with $v \in C^\infty(X)$.

\begin{proposition}\label{prop:BMNimplynorm}
Assume that $(K,v)$ is a weighted subset of $X$, then
\begin{itemize}
    \item[\textup{(i)}]  $N^k_{v,K}$ is a norm on $H^0(X,L^k\otimes T)$. 
    \item[\textup{(i)}] For any $\nu\in \BM(K,v)$, $N^k_{v,\nu}$ is a norm on $H^0(X,L^k\otimes T)$. 
\end{itemize}
\end{proposition}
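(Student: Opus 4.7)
The plan is to handle (i) first and then deduce the nontrivial part of (ii) from (i) via the Bernstein--Markov inequality. For both functions, positive homogeneity in $s$ is immediate from the fiber pairings being sesquilinear. The triangle inequality for $N^k_{v,K}$ follows from the pointwise triangle inequality for the Hermitian norm $\sqrt{h^k\otimes h_T(\cdot,\cdot)}$ combined with monotonicity of $\sup$; the triangle inequality for $N^k_{v,\nu}$ follows the same way, combined with Minkowski's inequality for $L^2(\nu)$ applied to the non-negative functions $\sqrt{h^k\otimes h_T(s_i,s_i)}\,e^{-kv/2}$.

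The only point that needs an argument is definiteness. For (i), recall that $v\in C^0(K)$ with $K$ compact (since $X$ is compact and $K$ is closed), so $v$ is bounded on $K$, hence $e^{-kv}$ is a strictly positive continuous function on $K$ and $\{v=-\infty\}=\emptyset$. Thus $N^k_{v,K}(s)=0$ forces $h^k\otimes h_T(s,s)\equiv 0$ on $K$, i.e.\ $s$ vanishes along $K$. The zero locus of the holomorphic section $s$ is an analytic subset of $X$, and any proper analytic subset is pluripolar; since $K$ is non-pluripolar, the zero locus must be all of $X$, giving $s\equiv 0$.

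For (ii), suppose $N^k_{v,\nu}(s)=0$ with $\nu\in\BM(K,v)$. Applying the Bernstein--Markov estimate \eqref{eq:BM} with any fixed $\varepsilon>0$ yields $N^k_{v,K}(s)\leq C_\varepsilon e^{\varepsilon k}\cdot 0=0$, and then part (i) implies $s\equiv 0$. Altogether, both $N^k_{v,K}$ and $N^k_{v,\nu}$ satisfy the three axioms of a norm on the finite-dimensional space $H^0(X,L^k\otimes T)$. The main (only) subtlety is the implicit use of the fact that holomorphic sections vanishing on a non-pluripolar set vanish identically, which is what the non-pluripolarity hypothesis on $K$ is designed to deliver; everything else is formal.
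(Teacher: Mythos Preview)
Your proof is correct and follows essentially the same approach as the paper: for (i) the key point is that $s|_K=0$ forces $s\equiv 0$, and for (ii) one reduces to (i) via the Bernstein--Markov inequality \eqref{eq:BM}. The only difference is cosmetic: the paper phrases the definiteness in (i) as ``$s|_K=0$, hence $s=0$ by the connectedness of $X$'', whereas you spell out the underlying mechanism (the zero locus of a nonzero section is a proper analytic subset, hence pluripolar, contradicting the non-pluripolarity of $K$); both arguments are the same once one notes that connectedness is what guarantees a proper analytic subset is genuinely thin.
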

\begin{proof}
(i)
As $v$ is bounded, $N^k_{v,K}$ is clearly finite on $H^0(X,L^k\otimes T)$. In order to show that it is a norm, it suffices to show that for any $s\in H^0(X,L^k\otimes T)$,  $N^k_{v,K}(s)=0$ implies that $s=0$. In fact, we have $s|_K=0$, hence $s=0$ by the connectedness of $X$.

(ii)
As $v$ is bounded, clearly $N^k_{v,\nu}$ is finite and satisfies the triangle inequality. Non-degeneracy follows from the fact that $N^k_{v,K}$ is a norm and \eqref{eq:BM}.
\end{proof}

\subsection{Partial Bergman kernels}
In this section, with the terminology and context of the previous section, we fix a weighted subset $(K,v)$ of $X$ and $\nu\in \BM(K,v)$. We introduce the associated partial Bergman kernels: for any $k\in \mathbb{N}$, $x\in K$,
 \begin{equation}
     B^k_{v,u, \nu}(x) := \sup \left\{h^k \otimes h_T(s,s)e^{-kv}(x): N^k_{v,\nu}(s,s) \leq 1\,,  s \in H^0(X,L^k \otimes T \otimes \mathcal I(ku)) \right\}\,.
 \end{equation}
The associated partial Bergman measures on $X$ are identically zero on $X \setminus K$, and on $K$ are defined in the following manner 
\begin{equation}
\beta^k_{v,u,\nu} : = \frac{n!}{k^n} B^k_{v,u,\nu} \,\mathrm{d}\nu\,.
\end{equation}
Observe that
\begin{equation}\label{eq:intbeta}
    \int_K \beta^k_{v,u,\nu}=\frac{n!}{k^n}h^0(X,L^k \otimes T \otimes \mathcal I(ku))\,.
\end{equation}

Our aim is to show the following weak convergence result:
\begin{equation}\label{eq: want_to_prove}
\beta^k_{v,u, \nu} \rightharpoonup \theta_{P_{K}[u]_{\mathcal{I}}(v)}^n\,, \quad \text{as } k \to \infty\,.
\end{equation}

We focus momentarily on the case when $\mathrm{d}\nu = \omega^n$ and $K = X$. That \eqref{eq: want_to_prove} holds in this particular case follows from \cite[Theorem 1.4]{RWN17}. Relying on the recent paper \cite{DNT19}, we give here a short proof of this result, borrowing ideas from \cite{Brm11} as well:
\begin{proposition}\label{prop: smooth_weak_conv} Let $u \in \PSH(X,\theta)$ such that $\theta_u$ is a Kähler current and $[u] \in \mathcal A(X,\theta)$. If $v \in C^\infty(X)$, then $\beta^k_{v,u,\omega^n} \rightharpoonup \theta_{P_{X}[u]_{\mathcal{I}}(v)}^n=\theta_{P_X[u](v)}^n$ as $k\to\infty$.
\end{proposition}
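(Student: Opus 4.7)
First, since $[u] \in \mathcal A(X,\theta)$, Lemma~\ref{lem: same_sing_type} gives $P_X[u](v) = P_X[u]_\mathcal I(v)$, so the two right-hand sides coincide; it therefore suffices to prove the weak convergence to $\theta^n_{P_X[u](v)}$. Integrating the partial Bergman measure yields
\[
\int_X \beta^k_{v,u,\omega^n} = \frac{n!}{k^n}\,h^0\bigl(X, L^k \otimes T \otimes \mathcal I(ku)\bigr) \;\longrightarrow\; \int_X \theta^n_{P_X[u]_\mathcal I} = \int_X \theta^n_{P_X[u](v)}
\]
by Theorem~\ref{thm:Tsuji}. In particular $\{\beta^k_{v,u,\omega^n}\}_k$ has uniformly bounded mass, hence is weakly precompact, and it suffices to show that any weak subsequential limit $\mu$ is bounded above by $\theta^n_{P_X[u](v)}$ as measures: the mass-matching then forces $\mu = \theta^n_{P_X[u](v)}$ and the full weak convergence follows.

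For the measure upper bound I would follow the approach of \cite{Brm11}. Consider the normalized log-density
\[
\varphi_k := \tfrac{1}{k}\log B^k_{v,u,\omega^n} + v,
\]
so that $\beta^k_{v,u,\omega^n} = \frac{n!}{k^n}\,e^{k(\varphi_k - v)}\omega^n$. Standard sub-mean-value arguments applied to sections of $L^k \otimes T$ give the pointwise upper bound $\varphi_k \leq v + O(\log k / k)$. On the other hand, since $\mathcal I(ku)$ is controlled by the analytic singularity type of $u$, the Briançon-Skoda argument of Remark~\ref{rem: Bergman_approx} (applied to each contributing section) yields $[\varphi_k] \preceq [\alpha_k u]$ for some $\alpha_k \nearrow 1$. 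Taking upper semi-continuous regularizations of the $\limsup$, both estimates combine to give
\[
\bigl(\limsup_{k\to\infty}\varphi_k\bigr)^* \leq P_X[u](v)\quad\text{on }X.
\]

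Thirdly, to deduce $\mu \leq \theta^n_{P_X[u](v)}$: on $\{P_X[u](v) < v\}$ the density $e^{k(\varphi_k - v)}$ decays exponentially in $k$, so $\mu$ vanishes there. On the contact set $\{P_X[u](v) = v\}$, the trivial domination $B^k_{v,u,\omega^n} \leq B^k_{v,V_\theta,\omega^n}$, coupled with Tian--Catlin--Zelditch asymptotics for the unrestricted Bergman kernel in the region $\{\theta_v > 0\}$, bounds $\mu$ above by $\mathds{1}_{\{P_X[u](v) = v\}}\theta^n_v$. The main result of \cite{DNT19} (applicable since $v \in C^\infty$) then gives $\mathds{1}_{\{P_X[u](v) = v\}}\theta^n_v = \theta^n_{P_X[u](v)}$, which together with the mass convergence from the first step closes the argument.

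The main obstacle will be the uniform upper bound $(\limsup_k \varphi_k)^* \leq P_X[u](v)$ in the presence of analytic singularities: the Briançon-Skoda singularity control must be shown to coexist cleanly with the sub-mean-value $L^2$-bound, most naturally via Demailly's regularization together with the hypothesis that $\theta_u$ is a Kähler current. A secondary subtlety concerns the behavior on $\{\theta_v = 0\}\cap \{P_X[u](v) = v\}$, where Tian--Catlin--Zelditch asymptotics degenerate; the envelope identity from \cite{DNT19} is precisely what handles this delicate subset by replacing $\theta^n_v$ with the non-pluripolar product $\theta^n_{P_X[u](v)}$.
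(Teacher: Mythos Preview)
Your approach is essentially the paper's: domination by the full Bergman kernel via \cite{Brm11}, Brian\c{c}on--Skoda singularity control from Remark~\ref{rem: Bergman_approx}, the envelope identity from \cite{DNT19}, and mass-matching via Theorem~\ref{thm:Tsuji}. Two points where the paper is more careful than your sketch. First, the potentials $\varphi_k$ are only $(\theta+\varepsilon\omega)$-psh (the twist $T$ contributes a curvature term of order $1/k$), so the paper works with $P_X^{\theta+\varepsilon\omega}[\alpha_k u](v)$ and lets $\varepsilon\searrow 0$ at the end; you should not expect $\varphi_k\in\PSH(X,\theta)$. Second, the statement $(\limsup_k\varphi_k)^*\leq P_X[u](v)$ by itself does not yield exponential decay of $e^{k(\varphi_k-v)}$ on $\{P_X[u](v)<v\}$, since passing to the $\limsup$ discards all rate information; what is actually needed (and what the paper proves as \eqref{eq: smooth_Berg_est}) is the per-$k$ bound $\varphi_k\leq P_X^{\theta+\varepsilon\omega}[\alpha_k u](v)+O(\tfrac{\log k}{k})$, from which one argues that $\mu$ puts no mass on the open sets $\{P_X^{\theta+\varepsilon\omega}[\alpha_k u](v)<v\}$ for each fixed $k$, and then exhausts. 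Finally, for the upper bound on the contact set the paper invokes \cite[Theorem~1.2]{Brm11} (valid for big classes) rather than Tian--Catlin--Zelditch, which would require positivity of $\theta$.
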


\begin{proof} That $\theta_{P_{X}[u]_{\mathcal{I}}(v)}^n=\theta_{P_X[u](v)}^n$ follows from Lemma \ref{lem: same_sing_type}.
We start with noticing that as $k\to\infty$,
\[
\beta^k_{v,u,\omega^n} \leq \beta^k_{v,V_\theta,\omega^n} \rightharpoonup \theta_{P_X[V_\theta](v)}^n=\mathds{1}_{\{v = P_X[V_\theta](v)\}}\theta_v^n\,, 
\]
where the convergence follows from \cite[Theorem~1.2]{Brm11}, and the last identity is due to \cite[Corollary~3.4]{DNT19}. Let $\mu$ be the weak limit of a subsequence of $\beta^k_{v,u,\omega^n}$, then we obtain that 
\begin{equation}\label{eq: Bergmanmeasure}
\mu \leq \lim_{k\to\infty} \beta^k_{v,V_\theta,\omega^n} = \mathds{1}_{\{v = P_X[V_\theta](v)\}}\theta_v^n\,.
\end{equation}
Let $s \in H^0(X,L^k \otimes T \otimes \mathcal I(ku))$ be a section such that $N^k_{v,\omega^n}(s,s) \leq 1$. Then by \cite[Lemma~4.1]{Brm11}, there exists $C>0$ such that $h^k\otimes h_T(s,s)e^{-kv} \leq B^k_{v,u,\omega^n} \leq B^k_{v,V_\theta,\omega^n} \leq k^n C$. This implies that
\[
\frac{1}{k}\log h^k \otimes h_T(s,s) \leq v + \frac{\log C }{k} + n \frac{\log k}{ k }\,.
\]
However, we also have that $[\frac{1}{k}\log h^k \otimes h_T(s,s)] \leq [\tilde u_k^D] \leq \alpha_k [u]$, where $\tilde u^D_k$ is as defined in Remark~\ref{rem: Bergman_approx}, and $\alpha_k \in (0,1)$ is also from the notation of Remark~\ref{rem: Bergman_approx}. Let $\varepsilon >0$. We notice that $\frac{1}{k}\log h^k \otimes h_T(s,s) \in \PSH(X, \theta + \varepsilon \omega)$ for all $k \geq k_0(\varepsilon)$.
In particular,
\[
\frac{1}{k}\log h^k \otimes h_T(s,s)-\frac{\log C}{k}-n\frac{\log k}{k}\leq P_{X}^{\theta+\varepsilon\omega}[\alpha_k u](v)\,.
\]
Now taking supremum over all candidates $s$, we obtain that 
\begin{equation}\label{eq: smooth_Berg_est}
B^k_{v,u,\omega^n} \leq C k^n e^{k (P_X^{\theta +\varepsilon \omega}[\alpha_k u](v)-v)}\,,\quad k \geq k_0\,. 
\end{equation}

We claim that $\mu$ does not put mass on $\{P_X^{\theta +\varepsilon \omega}[u](v) < v\}$ for any $\varepsilon >0$. Since $P_X^{\theta +\varepsilon \omega}[\alpha_k u](v) \searrow P_X^{\theta +\varepsilon \omega}[u](v)$ (Proposition~\ref{prop: conv_of_K_env}), we get that  $\{P_X^{\theta +\varepsilon \omega}[\alpha_k u](v) < v\} \nearrow \{P_X^{\theta +\varepsilon \omega}[u](v) < v\}$. 
As a result, to argue the claim, it suffices to show that $\mu$ does not put mass on the set $\{P_X^{\theta +\varepsilon \omega}[\alpha_k u](v) < v\}$ for any $k$. 
Note that the latter set is open, hence  \eqref{eq: smooth_Berg_est} implies our claim. 

Since $u \in \mathcal A(X,\theta)$, we have that $P_X^{\theta +\varepsilon \omega}[ u](v) = [u]$ for all $\varepsilon\geq0$ by Lemma \ref{lem: same_sing_type}. As a result, $P_X^{\theta +\varepsilon \omega}[ u](v) \searrow P_X^{\theta }[u](v)$, and we can let $\varepsilon \searrow 0$ to conclude that $\mu$ does not put mass on $\{P_X[u](v) < v\} = \bigcup_{\varepsilon>0} \{P_X^{\theta +\varepsilon \omega}[u](v) < v\}$. Putting this together with \eqref{eq: Bergmanmeasure}, we obtain that 
\[
\mu \leq  \mathds{1}_{\{P_X[u](v) = v\}}\theta_{v}^n = \theta_{P_X[u](v)}^n\,, 
\]
where the last equality is due to \cite[Corollary~3.4]{DNT19}. Comparing total masses (via \eqref{eq:intbeta} and Theorem~\ref{thm:vol_formula_main}), we conclude that $\mu=\theta_{P_X[u](v)}^n$. As $\mu$ is an arbitrary limit point of $\beta^k_{v,u,\omega^n}$, we conclude that $\beta^k_{v,u,\omega^n}$ converges weakly to $\theta_{P_X[u](v)}^n$, as $k\to\infty$.
\end{proof}

Next, let $\Nm(H^0(X,L^k \otimes T \otimes \mathcal I(ku)))$ be the space of $\mathbb C$-norms on the vector space $H^0(X,L^k \otimes T \otimes \mathcal I(ku))$, and let $\mathcal L_{k,u} : \Nm(H^0(X,L^k \otimes T \otimes \mathcal I(ku))) \to \mathbb R$ be the \emph{partial Donaldson functional}, extending the definition from \cite{BB10}:
\[
\mathcal L_{k,u}(H) = \frac{n!}{k^{n+1}} \log \frac{\vol\{s \ : \ H(s) \leq 1\}}{\vol \{s \ : \ N^k_{0,\omega^n}(s) \leq 1\}}\,,
\]
where $\vol$ is simply the Euclidean volume.

\begin{proposition}\label{prop: quant_I_smooth}
Let $w,w' \in C^0(X)$, $u \in \PSH(X,\theta)$ such that $\theta_u$ is a Kähler current and $[u] \in \mathcal A(X,\theta)$. Then
\begin{equation}\label{eq:LdiffonXsmoothmeasure}
\lim_{k\to\infty} \left(\mathcal L_{k,u}(N^k_{w,\omega^n}) - \mathcal L_{k,u}(N^k_{w',\omega^n}) \right)= \mathcal{I}^{\theta}_{[u],X}(w)-\mathcal{I}^{\theta}_{[u],X}(w')\,.
\end{equation}
In particular,
\begin{equation}\label{eq:LdiffonXsmoothmeasure2}
\lim_{k\to\infty}\mathcal L_{k,u}(N^k_{w,\omega^n})=\mathcal{I}^{\theta}_{[u],X}(w)\,.
\end{equation}
\end{proposition}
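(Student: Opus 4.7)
The plan is to mimic the classical strategy of \cite[Theorem~A]{BB10}: differentiate both sides of the asserted identity along the affine segment $w_t:=w+t(w'-w)$, $t\in[0,1]$, and match the two derivatives term by term via Proposition~\ref{prop: smooth_weak_conv}.

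First I would compute the $t$-derivative of $\mathcal L_{k,u}(N^k_{w+tf,\omega^n})$ for arbitrary $f\in C^0(X)$ by a standard $\log\det$ argument. Setting $\tilde H_v(s):=N^k_{v,\omega^n}(s)^2$, one has $\mathcal L_{k,u}(N^k_{w+tf,\omega^n})=-\tfrac{n!}{k^{n+1}}\log\det\tilde H_{w+tf}+\mathrm{const}$, since $\{s:N^k_{v,\omega^n}(s)\leq 1\}$ is a complex ellipsoid of Euclidean volume proportional to $\det(\tilde H_v)^{-1}$. Differentiating in an $\tilde H_{w+tf}$-orthonormal basis $\{s_i\}$ of $H^0(X,L^k\otimes T\otimes \mathcal I(ku))$ and using the standard identity $B^k_{w+tf,u,\omega^n}(x)=\sum_i h^k\otimes h_T(s_i,s_i)(x)\,e^{-k(w+tf)(x)}$ yields
\begin{equation}\label{pf:dLdt}
\frac{d}{dt}\mathcal L_{k,u}(N^k_{w+tf,\omega^n})=\int_X f\,\beta^k_{w+tf,u,\omega^n}.
\end{equation}
On the energy side, Proposition~\ref{prop: differential_P} (with $K=X$) gives
\begin{equation}\label{pf:dIdt}
\frac{d}{dt}\mathcal I^{\theta}_{[u],X}(w+tf)=\int_X f\,\theta^n_{P_X[u](w+tf)}.
\end{equation}
Taking $f=w'-w$ and integrating \eqref{pf:dLdt}, \eqref{pf:dIdt} over $t\in[0,1]$ reduces the proposition to
\begin{equation}\label{pf:limittoshow}
\lim_{k\to\infty}\int_0^1\!\int_X (w'-w)\,\beta^k_{w_t,u,\omega^n}\,dt=\int_0^1\!\int_X (w'-w)\,\theta^n_{P_X[u](w_t)}\,dt.
\end{equation}

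To prove \eqref{pf:limittoshow} I would, for each fixed $t$, approximate $w_t\in C^0(X)$ uniformly by smooth potentials and transport the smooth-case weak convergence supplied by Proposition~\ref{prop: smooth_weak_conv} across the closure using the fact that $v\mapsto \mathcal L_{k,u}(N^k_{v,\omega^n})$ is Lipschitz on $C^0(X)$ with constant independent of $k$---a direct consequence of the estimate $\tilde H_{v'}\leq e^{k\|v-v'\|_\infty}\tilde H_v$ combined with the bound $\dim H^0(X,L^k\otimes T\otimes\mathcal I(ku))=O(k^n)$ coming from Theorem~\ref{thm:Tsuji}---together with the analogous $C^0$-continuity of $v\mapsto \mathcal I^{\theta}_{[u],X}(v)$. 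This delivers pointwise (in $t$) weak convergence $\beta^k_{w_t,u,\omega^n}\rightharpoonup \theta^n_{P_X[u](w_t)}$, whence $\int_X(w'-w)\,\beta^k_{w_t,u,\omega^n}\to\int_X(w'-w)\,\theta^n_{P_X[u](w_t)}$ for every $t$. Since the total masses $\int_X\beta^k_{w_t,u,\omega^n}=\tfrac{n!}{k^n}h^0(X,L^k\otimes T\otimes\mathcal I(ku))$ are independent of $t$ and uniformly bounded in $k$ by Theorem~\ref{thm:Tsuji}, dominated convergence allows me to interchange $\lim_{k\to\infty}$ with $\int_0^1\cdot\,dt$, yielding \eqref{pf:limittoshow}.

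The ``In particular'' clause then follows immediately by setting $w'=0$ in the main identity: $\mathcal L_{k,u}(N^k_{0,\omega^n})=0$ directly from the definition of the partial Donaldson functional, and $\mathcal I^{\theta}_{[u],X}(0)=I^{\theta}_{[u]}(P[u])=0$ by direct inspection of the energy at $\varphi=P[u]$. The main obstacle is the $C^0$-versus-$C^\infty$ discrepancy addressed in the previous paragraph: Proposition~\ref{prop: smooth_weak_conv} is only stated for smooth weights, so the $k$-uniform $C^0$-Lipschitz continuity of $\mathcal L_{k,u}$ is the crucial technical ingredient that allows the smooth-case convergence to be propagated to continuous $w_t$, after which dominated convergence in the $t$-variable concludes the argument.
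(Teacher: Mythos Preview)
Your overall strategy---differentiate $\mathcal L_{k,u}$ along the segment, identify the derivative with $\int_X f\,\beta^k$, invoke Proposition~\ref{prop: smooth_weak_conv}, apply dominated convergence in $t$, and match with Proposition~\ref{prop: differential_P}---is exactly the paper's proof. You also correctly flag a point the paper glosses over: Proposition~\ref{prop: smooth_weak_conv} is stated only for $v\in C^\infty(X)$, while here $w_t\in C^0(X)$.

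However, your proposed bridge across this gap does not work as written. You argue that the uniform-in-$k$ Lipschitz bound on $v\mapsto \mathcal L_{k,u}(N^k_{v,\omega^n})$, together with $C^0$-continuity of $\mathcal I^{\theta}_{[u],X}$, ``delivers pointwise (in $t$) weak convergence $\beta^k_{w_t,u,\omega^n}\rightharpoonup \theta^n_{P_X[u](w_t)}$''. But a Lipschitz bound on a functional controls its \emph{values}, not its \emph{directional derivatives}; the measures $\beta^k_{v,u,\omega^n}$ are precisely the G\^ateaux derivatives of $v\mapsto\mathcal L_{k,u}(N^k_{v,\omega^n})$, and pointwise convergence of Lipschitz functions says nothing about convergence of derivatives without additional structure. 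Concretely, from $\tilde H_{v'}\le e^{k\|v-v'\|_\infty}\tilde H_v$ one gets $\beta^k_{v',u,\omega^n}\le e^{2k\|v-v'\|_\infty}\beta^k_{v,u,\omega^n}$, and the factor $e^{2k\|v-v'\|_\infty}$ is not uniform in $k$. To salvage this route you would need to invoke the \emph{concavity} of $v\mapsto\mathcal L_{k,u}(N^k_{v,\omega^n})$ (H\"older) and a derivative-passage lemma such as \cite[Lemma~7.6]{BB10}; without that, the inference fails.

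A cleaner repair, using ingredients you already have: first prove \eqref{eq:LdiffonXsmoothmeasure} for $w,w'\in C^\infty(X)$, where $w_t$ is smooth and Proposition~\ref{prop: smooth_weak_conv} applies directly along the segment (this is literally the paper's computation). Then extend to $w,w'\in C^0(X)$ by approximating each in sup norm by smooth functions and invoking the very Lipschitz bounds you established---but applied to the \emph{functionals} $\mathcal L_{k,u}$ and $\mathcal I^{\theta}_{[u],X}$ themselves, not to the measures. This bypasses any need to prove weak convergence of $\beta^k_{w_t,u,\omega^n}$ for continuous $w_t$ inside this proposition.
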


\begin{proof}
First observe that by Proposition~\ref{prop:BMNimplynorm}, for any $k>0$, $N^k_{w,\omega^n}$ and $N^k_{w',\omega^n}$ are both norms, hence the  expressions inside the limit in \eqref{eq:LdiffonXsmoothmeasure} make sense.

To start, we make the following classical observation:
\begin{flalign*}
\mathcal L_{k,u}(N^k_{w,\omega^n}) - \mathcal L_{k,u}(N^k_{w',\omega^n})&= \int_0^1 \frac{\mathrm{d}}{\mathrm{d}t} \mathcal L_{k,u}(N^k_{w + t(w'-w),\omega^n})\,\mathrm{d}t\\
& = \int_0^1 \int_X (w'-w) \,\beta^k_{w + t(w'-w),u,\omega^n}\, \mathrm{d}t\,.
\end{flalign*}
By Proposition~\ref{prop: smooth_weak_conv}, we have
\[
\lim_{k\to\infty}\int_X (w'-w) \,\beta^k_{w + t(w'-w),u,\omega^n}=\int_X (w'-w)\,\theta_{P_X[u](w + t(w'-w))}^n\,.
\]
By Theorem \ref{thm:vol_formula_main} we have  $|\int_X (w'-w)\beta^k_{w + t(w'-w), u , \omega^n}| \leq C \sup_X |w - w'|$. Hence, by the dominated convergence theorem we obtain that 
\[
\begin{aligned}
\lim_{k\to\infty} \left(\mathcal L_{k,u}(N^k_{w,\omega^n}) - \mathcal L_{k,u}(N^k_{w',\omega^n})\right) &= \int_0^1 \int_X (w'-w) \,\theta_{P_X[u](w + t(w'-w))}^n \,\mathrm{d}t\\
& = \mathcal{I}^{\theta}_{[u],X}(w)-\mathcal{I}^{\theta}_{[u],X}(w')\,,
\end{aligned}
\]
where in the last line we have used Proposition~\ref{prop: differential_P}.

Finally, \eqref{eq:LdiffonXsmoothmeasure2} is just a special case of \eqref{eq:LdiffonXsmoothmeasure} with $w'=0$.
\end{proof}

\begin{lemma}\label{lem:BML}
Let $u\in \PSH(X,\theta)$.
Let $(K,v)$ be a weighted subset of $X$. Let $\nu\in \BM(K,v)$. Then
\begin{equation}\label{eq: Bern_Mark_implies}
\lim_{k\to\infty} \big( \mathcal L_{k,u}(N^k_{v,K}) - \mathcal L_{k,u}(N^k_{v,\nu})\big) = 0\,.
\end{equation}
\end{lemma}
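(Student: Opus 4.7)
The plan is to reduce everything to a direct volume ratio estimate coming from the Bernstein--Markov comparison \eqref{eq:BM}, and then apply Theorem~\ref{thm:vol_formula_main} to control the resulting prefactor. The common denominator $\vol\{N^k_{0,\omega^n}\leq 1\}$ in the definition of $\mathcal L_{k,u}$ cancels in the difference, leaving
\[
\mathcal L_{k,u}(N^k_{v,K}) - \mathcal L_{k,u}(N^k_{v,\nu}) = \frac{n!}{k^{n+1}}\log\frac{\vol\{N^k_{v,K}\leq 1\}}{\vol\{N^k_{v,\nu}\leq 1\}}\,.
\]

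The first step is the elementary one-sided bound. Since $N^k_{v,\nu}\leq N^k_{v,K}$ pointwise (this is \eqref{eq:Nkinfcomp}), the unit ball for $N^k_{v,K}$ is contained in that of $N^k_{v,\nu}$, so $\log(\vol\{N^k_{v,K}\leq 1\}/\vol\{N^k_{v,\nu}\leq 1\})\leq 0$. This immediately gives the upper estimate $\mathcal L_{k,u}(N^k_{v,K}) - \mathcal L_{k,u}(N^k_{v,\nu})\leq 0$.

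For the matching lower estimate, I use \eqref{eq:BM} in reverse: for any $\varepsilon>0$ there is $C_\varepsilon>0$ with $N^k_{v,K}\leq C_\varepsilon e^{\varepsilon k} N^k_{v,\nu}$ on $H^0(X,L^k\otimes T\otimes\mathcal I(ku))$, hence
\[
\{N^k_{v,K}\leq 1\}\supseteq (C_\varepsilon e^{\varepsilon k})^{-1}\{N^k_{v,\nu}\leq 1\}\,.
\]
Setting $d_k:=h^0(X,L^k\otimes T\otimes\mathcal I(ku))$, the real dimension of the space of sections is $2d_k$, so the scaling of Lebesgue measure on this space yields
\[
\vol\{N^k_{v,K}\leq 1\}\geq (C_\varepsilon e^{\varepsilon k})^{-2d_k}\vol\{N^k_{v,\nu}\leq 1\}\,.
\]
Combining, we obtain
\[
-\frac{2n!d_k}{k^{n+1}}(\log C_\varepsilon + \varepsilon k)\leq \mathcal L_{k,u}(N^k_{v,K}) - \mathcal L_{k,u}(N^k_{v,\nu})\leq 0\,.
\]

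The final step invokes Theorem~\ref{thm:vol_formula_main} (with $r=1$), which gives $d_k/k^n\to \frac{1}{n!}\int_X\theta^n_{P[u]_\mathcal I}=:M<\infty$. Therefore
\[
\frac{2n!d_k}{k^{n+1}}(\log C_\varepsilon + \varepsilon k)=\frac{2n!d_k\log C_\varepsilon}{k^{n+1}}+\frac{2n!\varepsilon d_k}{k^n}\longrightarrow 0 + 2n!\varepsilon M\,,
\]
so $\varliminf_{k\to\infty}(\mathcal L_{k,u}(N^k_{v,K})-\mathcal L_{k,u}(N^k_{v,\nu}))\geq -2n!\varepsilon M$. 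Since $\varepsilon>0$ was arbitrary, \eqref{eq: Bern_Mark_implies} follows. No step here is a real obstacle; the only thing to be careful about is that $d_k$ could be $0$ in trivial cases (handled by noting $\mathcal L_{k,u}$ is then vacuous or the dimension factor kills the estimate), and that $M$ may vanish, which only makes the bound tighter.
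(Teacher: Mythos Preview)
Your proof is correct and is precisely the natural expansion of the paper's one-line justification (``This is a direct consequence of the definition of Bernstein--Markov measures''). The only remark is that invoking Theorem~\ref{thm:vol_formula_main} is more than you need: the elementary bound $d_k\leq h^0(X,L^k\otimes T)=O(k^n)$ already gives $d_k/k^{n+1}\to 0$ and $d_k/k^n$ bounded, which is all the final step requires; but there is no circularity since Theorem~\ref{thm:vol_formula_main} is established in Section~\ref{sec:proofmainthm}, well before this lemma.
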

This is a direct consequence of the definition of Bernstein--Markov measures \eqref{eq:BM}.

\begin{corollary}\label{cor:Ninfdifflim}
Let $w \in C^0(X)$, $u \in \PSH(X,\theta)$ such that $\theta_u$ is a Kähler current and $[u] \in \mathcal A(X,\theta)$. Then
\[
\lim_{k\to\infty}\mathcal L_{k,u}(N^k_{w,X})=\mathcal{I}^{\theta}_{[u],X}(w)\,.
\]
\end{corollary}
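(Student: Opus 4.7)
The plan is to reduce to the smooth case via uniform approximation and to exploit that for smooth weights the Kähler volume form is Bernstein--Markov. Concretely, since $w \in C^0(X)$ and $X$ is compact, take a sequence $w_j \in C^\infty(X)$ with $\|w - w_j\|_{C^0(X)} \to 0$. For each $j$, the volume form $\omega^n$ belongs to $\BM(X, w_j)$ (as recorded in the paragraph before Proposition~\ref{prop:BMNimplynorm}), so Lemma~\ref{lem:BML} gives $\mathcal L_{k,u}(N^k_{w_j,X}) - \mathcal L_{k,u}(N^k_{w_j,\omega^n}) \to 0$ as $k \to \infty$, while Proposition~\ref{prop: quant_I_smooth} yields $\mathcal L_{k,u}(N^k_{w_j,\omega^n}) \to \mathcal I^\theta_{[u],X}(w_j)$. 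Combining the two, for every fixed $j$ one gets $\lim_{k\to\infty}\mathcal L_{k,u}(N^k_{w_j,X}) = \mathcal I^\theta_{[u],X}(w_j)$.

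It then remains to pass $j \to \infty$, for which I need two uniform comparison estimates. Set $\varepsilon_j := \|w-w_j\|_{C^0(X)}$. Directly from the definition of $N^k_{\cdot,X}$ one has the pointwise bound
\[
e^{-k\varepsilon_j/2}\, N^k_{w_j,X}(s) \leq N^k_{w,X}(s) \leq e^{k\varepsilon_j/2}\, N^k_{w_j,X}(s),
\]
so the unit balls satisfy the inclusions
\[
\{N^k_{w_j,X}\leq e^{-k\varepsilon_j/2}\} \subseteq \{N^k_{w,X}\leq 1\} \subseteq \{N^k_{w_j,X}\leq e^{k\varepsilon_j/2}\}.
\]
Writing $N_k := h^0(X, L^k \otimes T \otimes \mathcal I(ku))$ and using that the Euclidean volume of a ball scales as the $N_k$-th power of its radius, one obtains $|\log\vol\{N^k_{w,X}\leq 1\} - \log\vol\{N^k_{w_j,X}\leq 1\}| \leq N_k k \varepsilon_j/2$. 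Dividing by $k^{n+1}/n!$ and invoking Theorem~\ref{thm:vol_formula_main} to control $N_k/k^n$, I get
\[
\bigl|\mathcal L_{k,u}(N^k_{w,X}) - \mathcal L_{k,u}(N^k_{w_j,X})\bigr| \leq C\,\varepsilon_j
\]
for some constant $C$ independent of $k$ and $j$. On the other side, Proposition~\ref{prop: differential_P} together with Lemma~\ref{lem: same_sing_type} (which gives $\int_X \theta^n_{P_X[u](w+t(w_j-w))} = \int_X \theta^n_u$ for all $t \in [0,1]$) yields
\[
\bigl|\mathcal I^\theta_{[u],X}(w) - \mathcal I^\theta_{[u],X}(w_j)\bigr| \leq \varepsilon_j \int_X \theta^n_u.
\]

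Putting everything together through the triangle inequality
\[
\bigl|\mathcal L_{k,u}(N^k_{w,X}) - \mathcal I^\theta_{[u],X}(w)\bigr| \leq \bigl|\mathcal L_{k,u}(N^k_{w,X}) - \mathcal L_{k,u}(N^k_{w_j,X})\bigr| + \bigl|\mathcal L_{k,u}(N^k_{w_j,X}) - \mathcal I^\theta_{[u],X}(w_j)\bigr| + \bigl|\mathcal I^\theta_{[u],X}(w_j) - \mathcal I^\theta_{[u],X}(w)\bigr|,
\]
a standard $3\varepsilon$ argument (first fix $j$ so large that the first and third terms are small uniformly in $k$, then let $k \to \infty$ to kill the middle term) concludes the proof. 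The main point requiring care is the norm comparison estimate in the previous paragraph, since a priori $\mathcal L_{k,u}$ is not obviously Lipschitz in $w$; the key observation that makes it work is that the Euclidean volume of a ball in the $N_k$-dimensional space scales as the $N_k$-th power of its radius, and $N_k/k^{n+1} \to 0$ at the right rate thanks to Theorem~\ref{thm:vol_formula_main}.
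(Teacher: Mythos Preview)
Your proof is correct. It takes a closely related but slightly more explicit route than the paper's one-line argument. The paper simply combines Proposition~\ref{prop: quant_I_smooth} (already stated for $w\in C^0(X)$, giving $\mathcal L_{k,u}(N^k_{w,\omega^n})\to\mathcal I^\theta_{[u],X}(w)$) with Lemma~\ref{lem:BML}, invoking that $\omega^n$ is Bernstein--Markov; this requires $\omega^n\in\BM(X,w)$ for continuous $w$, which follows from the smooth case by an easy uniform approximation, though the paper only writes ``$\omega^n\in\BM(X,0)$''. You instead carry out the approximation directly at the level of $\mathcal L_{k,u}$: you establish a uniform-in-$k$ Lipschitz bound $|\mathcal L_{k,u}(N^k_{w,X})-\mathcal L_{k,u}(N^k_{w_j,X})|\leq C\varepsilon_j$ via the scaling of ball volumes together with Theorem~\ref{thm:vol_formula_main}, and then run a $3\varepsilon$ argument. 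This is more hands-on but has the merit of only using the Bernstein--Markov property for smooth weights, exactly as stated in the paper. One cosmetic point: since $H^0(X,L^k\otimes T\otimes\mathcal I(ku))$ has \emph{complex} dimension $N_k$, the Euclidean volume scales as the $2N_k$-th power of the radius, so your bound picks up a harmless factor of $2$.
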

\begin{proof}
This follows from Lemma~\ref{lem:BML} and Proposition~\ref{prop: quant_I_smooth} and the fact that $\omega^n \in \BM(X,0)$.
\end{proof}

Using these preliminary facts we extend Proposition~\ref{prop: quant_I_smooth} for much less regular data, again relying on ideas from \cite{BB10}:

\begin{proposition}\label{prop: quant_I_algebraic_BM} Let $u \in \PSH(X,\theta)$ such that $\theta_u$ is a Kähler current and $[u] \in \mathcal A(X,\theta)$. Let $(K,v)$, $(K',v')$ be two weighted subsets of $X$.
Then
\begin{equation}\label{eq:LkdiffconvtoI}
\lim_{k\to\infty} \big( \mathcal{L}_{k,u}(N^k_{v,K}) - \mathcal{L}_{k,u}(N^k_{v',K'}) \big) = \mathcal{I}^{\theta}_{[u],K}(v)-\mathcal{I}^{\theta}_{[u],K'}(v')\,.
\end{equation}
In particular,
\begin{equation}\label{eq:Lkconv}
\lim_{k\to\infty}\mathcal{L}_{k,u}(N^k_{v,K})=\mathcal{I}^{\theta}_{[u],K}(v)\,.
\end{equation}
\end{proposition}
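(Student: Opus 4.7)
The identity \eqref{eq:Lkconv} follows from \eqref{eq:LkdiffconvtoI} by specialising to $(K',v')=(X,0)$ and invoking Corollary~\ref{cor:Ninfdifflim} to cancel the term $\mathcal L_{k,u}(N^k_{0,X})\to \mathcal I^\theta_{[u],X}(0)$; conversely, \eqref{eq:LkdiffconvtoI} follows from two applications of \eqref{eq:Lkconv} by subtraction. So the two formulas are equivalent, and I focus on \eqref{eq:Lkconv}.

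For the lower bound $\liminf_k \mathcal L_{k,u}(N^k_{v,K})\geq \mathcal I^\theta_{[u],K}(v)$, I use the smooth sequence $v_j^-\in C^\infty(X)$ of Lemma~\ref{lem: global_env_approx}, which, by construction, satisfies $v_j^-\leq v$ on $K$. Then for every section $s$,
\[
N^k_{v_j^-,X}(s)\ \geq\ N^k_{v_j^-,K}(s)\ \geq\ N^k_{v,K}(s),
\]
so monotonicity of $\mathcal L_{k,u}$ in the norm yields $\mathcal L_{k,u}(N^k_{v_j^-,X})\leq \mathcal L_{k,u}(N^k_{v,K})$. Letting $k\to \infty$ via Corollary~\ref{cor:Ninfdifflim} and then $j\to\infty$ via Lemma~\ref{lma:equiconv} delivers the desired bound.

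For the upper bound, my plan is to introduce the lsc extension $\tilde v$ from the proof of Lemma~\ref{lem: global_env_approx}, defined by $\tilde v|_K=v$ and $\tilde v\equiv C_{K,v}+1$ on $X\setminus K$, and to establish the pointwise identity $N^k_{\tilde v,X}(s)=N^k_{v,K}(s)$ for all $s\in H^0(X,L^k\otimes T)$ and $k$ large. The $\geq$ direction is immediate from $X\supseteq K$ and $\tilde v|_K=v$. For the $\leq$ direction, the function $\tfrac{1}{k}\log h^k\otimes h_T(s,s)$ lies in $\PSH(X,\theta+\omega)$ for $k$ large and is dominated on $K$ by $v+\tfrac{1}{k}\log N^k_{v,K}(s)^2$, so the Siciak--Zakharjuta extremal function $V^*_K(v)\in \PSH(X,\theta+\omega)$ yields the global estimate $h^k\otimes h_T(s,s)\leq e^{k\,V^*_K(v)}N^k_{v,K}(s)^2\leq e^{kC_{K,v}}N^k_{v,K}(s)^2$ on $X$. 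On $X\setminus K$ we have $\tilde v=C_{K,v}+1$, so $h^k\otimes h_T(s,s)\,e^{-k\tilde v}\leq e^{-k}N^k_{v,K}(s)^2$, completing the identity. A direct verification using the definition of $C_{K,v}$ also gives $P_X[u](\tilde v)=P_K[u](v)$, hence $\mathcal I^\theta_{[u],X}(\tilde v)=\mathcal I^\theta_{[u],K}(v)$.

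The hard part is then to show that $\mathcal L_{k,u}(N^k_{\tilde v,X})\to \mathcal I^\theta_{[u],X}(\tilde v)$ as $k\to \infty$, since $\tilde v$ is only lsc whereas Corollary~\ref{cor:Ninfdifflim} requires continuous weights. I would handle this by approximating $\tilde v$ by the smooth sequence $v_j^-\nearrow \tilde v$ from Lemma~\ref{lem: global_env_approx} and noting that $N^k_{v_j^-,X}(s)\searrow N^k_{\tilde v,X}(s)$ pointwise in $s$ for each fixed $k$ (as a monotone limit of suprema of usc functions on the compact $X$), whence $\mathcal L_{k,u}(N^k_{v_j^-,X})\nearrow \mathcal L_{k,u}(N^k_{\tilde v,X})$ in $j$. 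Combined with the uniform convergence $v_j^-|_K\to v$ on the compact set $K$ coming from Dini's theorem, together with the Siciak--Zakharjuta estimate above used to control the ratio $N^k_{v_j^-,X}/N^k_{v,K}$ uniformly in $k$ on the transition zone near $\partial K$, the iterated limits $\lim_k \lim_j$ and $\lim_j \lim_k$ can be interchanged, giving $\lim_k \mathcal L_{k,u}(N^k_{v,K})=\lim_j \mathcal I^\theta_{[u],X}(v_j^-)=\mathcal I^\theta_{[u],K}(v)$ by Lemma~\ref{lma:equiconv}.
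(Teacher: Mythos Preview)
Your lower bound is correct and in fact more direct than the paper's: since $v_j^-\leq v$ on $K$, the chain $N^k_{v_j^-,X}\geq N^k_{v_j^-,K}\geq N^k_{v,K}$ immediately gives $\mathcal L_{k,u}(N^k_{v_j^-,X})\leq \mathcal L_{k,u}(N^k_{v,K})$, and Corollary~\ref{cor:Ninfdifflim} together with Lemma~\ref{lma:equiconv} finishes this direction.

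The upper bound, however, has a genuine gap. Your identity $N^k_{\tilde v,X}=N^k_{v,K}$ is fine, but the last paragraph does not prove $\limsup_k \mathcal L_{k,u}(N^k_{\tilde v,X})\leq \mathcal I^\theta_{[u],K}(v)$. You approximate the lsc weight $\tilde v$ from \emph{below} by $v_j^-$, so $\mathcal L_{k,u}(N^k_{v_j^-,X})\leq \mathcal L_{k,u}(N^k_{\tilde v,X})$; passing to the $k$-limit only reproduces the lower bound. The claimed interchange of limits needs a uniform-in-$k$ estimate of the form $\mathcal L_{k,u}(N^k_{\tilde v,X})-\mathcal L_{k,u}(N^k_{v_j^-,X})\to 0$ as $j\to\infty$, and your Siciak--Zakharjuta bound does not provide this: on $X\setminus K$ one only gets $h^k\otimes h_T(s,s)e^{-kv_j^-}\leq e^{k(C_{K,v}-v_j^-)}N^k_{v,K}(s)^2$, which blows up in $k$ whenever $v_j^-<C_{K,v}$ somewhere off $K$. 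Dini on $K$ does not help, since the sup defining $N^k_{v_j^-,X}$ is taken over all of $X$.

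The paper obtains the upper bound by a different mechanism: it uses the maximum principle of Lemma~\ref{lem: P_K_max_princ} (applied with the singularity constraint $[h]\preceq[(1-\varepsilon)u]$ coming from Remark~\ref{rem: Bergman_approx}) to rewrite $N^k_{v,K}(s)=N^k_{E^{\theta+\varepsilon'\omega}_K[(1-\varepsilon)u](v),X}(s)$ for $s\in H^0(X,T\otimes L^k\otimes\mathcal I(ku))$, and likewise for $v_j^\pm$. Since $E^{\theta+\varepsilon'\omega}_X[(1-\varepsilon)u](v_j^-)\leq E^{\theta+\varepsilon'\omega}_K[(1-\varepsilon)u](v)\leq E^{\theta+\varepsilon'\omega}_X[(1-\varepsilon)u](v_j^+)$, Lemma~\ref{lma:mononorm} yields a two-sided sandwich $N^k_{v_j^+,X}\leq N^k_{v,K}\leq N^k_{v_j^-,X}$; the upper bound on $\limsup_k \mathcal L_{k,u}(N^k_{v,K})$ then comes from the \emph{decreasing} sequence $v_j^+$ via Corollary~\ref{cor:Ninfdifflim} and Lemma~\ref{lma:equiconv}. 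The missing ingredient in your argument is precisely an approximation from above (or an equivalent device such as Lemma~\ref{lem: P_K_max_princ}) that converts the $K$-sup into an $X$-sup against a weight you can control by a continuous global function from above.
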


\begin{proof}
First observe that by Proposition~\ref{prop:BMNimplynorm}, for any $k>0$, $N^k_{v,K}$ and $N^k_{v',K'}$ are both norms, hence the expressions inside the limit in \eqref{eq:LkdiffconvtoI} make sense. Moreover, \eqref{eq:Lkconv} is just a special case of \eqref{eq:LkdiffconvtoI} for $K'=X$ and $v'= 0$.

To prove \eqref{eq:LkdiffconvtoI} it is enough to show that for any fixed $w \in C^\infty(X)$ we have
\begin{equation}\label{eq:inproofLdiffsmw}
\lim_{k\to\infty} \big( \mathcal L_{k,u}(N^k_{v,K}) - \mathcal L_{k,u}(N^k_{w,\omega^n}) \big) = \mathcal{I}^{\theta}_{[u],K}(v) - \mathcal{I}^{\theta}_{[u],X}(w)\,.
\end{equation}

For $\varepsilon  \in (0,1)$ small enough we have that $\theta_{(1-\varepsilon)u}$ is still a Kähler current. Let us fix such $\varepsilon$, along with an arbitrary $\varepsilon'  \in (0,1)$.

Let $\{v^-_{j}\}_j, \{v^+_{j}\}_j$  be the sequence of smooth global functions constructed in Lemma~\ref{lem: global_env_approx} for the data $(K,v)$.

By Remark~\ref{rem: Bergman_approx} there exists $k_0(\varepsilon,\varepsilon') \in \mathbb N$ such that $[\frac{1}{k} \log h^k \otimes h_T (s,s)] \preceq [(1-\varepsilon)u]$, as well as $\frac{1}{k} \log h^k \otimes h_T (s,s) \in \PSH(X,\theta + \varepsilon'\omega)$ for any $s \in H^0(X,T\otimes L^k  \otimes \mathcal I(ku)), \ k \geq k_0(\varepsilon,\varepsilon')$. 

In particular, Lemma~\ref{lem: P_K_max_princ} gives  that
\[
N^k_{E^{\theta + \varepsilon'\omega}_K[(1-\varepsilon)u](v),X}(s)=N^k_{v,K}(s)\,.
\]
\[N^k_{E^{\theta + \varepsilon'\omega}_X[(1-\varepsilon)u](v_{j}^-),X}(s)=N^k_{v_{j}^-,X}(s)\,.
\]
\[
N^k_{E^{\theta + \varepsilon'\omega}_X[(1-\varepsilon)u](v_{j}^+),X}(s)=N^k_{v^+_{j},X}(s)\,.
\]
As $E^{\theta + \varepsilon'\omega}_X[(1-\varepsilon)u](v_{j}^-) \leq E^{\theta + \varepsilon'\omega}_K[(1-\varepsilon)u](v) \leq E^{\theta + \varepsilon'\omega}_X[(1-\varepsilon)u](v_{j}^+)$, by Lemma~\ref{lma:mononorm} we have
\[
N^k_{v^+_{j},X}(s) \leq N^k_{v,K}(s) \leq N^k_{v_{j}^-,X}(s)\, , \, s \in H^0(X,T \otimes L^k  \otimes \mathcal I(ku)), \ k \geq k_0(\varepsilon,\varepsilon').
\]
Composing with $\mathcal{L}_{k,u}$ we arrive at:
\[
\mathcal{L}_{k,u}(N^k_{v^-_{j} ,X})\leq  \mathcal{L}_{k,u}(N^k_{v,K})\leq  \mathcal{L}_{k,u}(N^k_{v^+_{j} ,X})\,, \ k \geq k_0(\varepsilon,\varepsilon').
\]
For any $j>0$, by Corollary~\ref{cor:Ninfdifflim} we get
\[
\begin{aligned}
\mathcal{I}^{\theta}_{[u],X}(v^-_{j}) - \mathcal{I}^{\theta}_{[u],X}(w)=& \lim_{k\to\infty} \left(\mathcal{L}_{k,u}(N^k_{v^+_{j} ,X})-\mathcal{L}_{k,u}(N^k_{w,X})\right)\\
\leq & \varliminf_{k\to\infty}\left(\mathcal{L}_{k,u}(N^k_{v,K})- \mathcal{L}_{k,u}(N^k_{w,X})\right)\\
\leq & \varlimsup_{k\to\infty}\left(\mathcal{L}_{k,u}(N^k_{v,K})- \mathcal{L}_{k,u}(N^k_{w,X})\right)\\
\leq & \lim_{k\to\infty} \left(\mathcal{L}_{k,u}(N^k_{v^-_{j} ,X})-\mathcal{L}_{k,u}(N^k_{w,X})\right)\\
=&  \mathcal{I}^{\theta}_{[u],X}(v^+_{j}) - \mathcal{I}^{\theta}_{[u],X}(w) \,.
\end{aligned}
\]
Using Lemma~\ref{lma:equiconv}, we can let $j \to \infty$ to arrive at 
\[
\begin{split}
\mathcal{I}^{\theta}_{[u],K}(v)  -\mathcal{I}^{\theta}_{[u],K}(w) &\leq \varliminf_{k\to\infty}\left(\mathcal{L}_{k,u}(N^k_{v,K})- \mathcal{L}_{k,u}(N^k_{w,X})\right)\\
&\leq\varlimsup_{k\to\infty}\left(\mathcal{L}_{k,u}(N^k_{v,K})- \mathcal{L}_{k,u}(N^k_{w,X})\right)\\
& \leq \mathcal{I}^{\theta}_{[u],K}(v)  -\mathcal{I}^{\theta}_{[u],K}(w)\,.
\end{split}
\]
Hence, \eqref{eq:inproofLdiffsmw} follows.
\end{proof}

\begin{corollary} \label{cor:LktoI}
Let $u \in \PSH(X,\theta)$ such that $\theta_u$ is a Kähler current and $[u] \in \mathcal A(X,\theta)$. Let $(K,v)$ be a weighted subset of $X$. Assume that $\nu\in \BM(K,v)$.
Then
\[
\lim_{k\to\infty}  \mathcal{L}_{k,u}(N^k_{v,\nu})= \mathcal{I}^{\theta}_{[u],K}(v) \,.
\]
\end{corollary}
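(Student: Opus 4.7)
The statement is essentially immediate by combining two facts already in place. The plan is to write $\mathcal{L}_{k,u}(N^k_{v,\nu})$ as the difference $\mathcal{L}_{k,u}(N^k_{v,K}) - \bigl(\mathcal{L}_{k,u}(N^k_{v,K}) - \mathcal{L}_{k,u}(N^k_{v,\nu})\bigr)$ and take the limit term by term.

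First, I would invoke \eqref{eq:Lkconv} from Proposition~\ref{prop: quant_I_algebraic_BM}, applied to the single weighted subset $(K,v)$ (no auxiliary reference pair is needed for the statement we want), which yields
\[
\lim_{k\to\infty} \mathcal{L}_{k,u}(N^k_{v,K}) = \mathcal{I}^{\theta}_{[u],K}(v)\,.
\]
Next, since by hypothesis $\nu \in \BM(K,v)$, Lemma~\ref{lem:BML} gives directly
\[
\lim_{k\to\infty} \bigl( \mathcal{L}_{k,u}(N^k_{v,K}) - \mathcal{L}_{k,u}(N^k_{v,\nu}) \bigr) = 0\,.
\]
Subtracting the two limits yields the desired conclusion $\lim_{k\to\infty} \mathcal{L}_{k,u}(N^k_{v,\nu}) = \mathcal{I}^{\theta}_{[u],K}(v)$.

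The only small thing to verify (to ensure that $\mathcal{L}_{k,u}(N^k_{v,\nu})$ is even defined) is that $N^k_{v,\nu}$ restricts to a genuine norm on the subspace $H^0(X,L^k \otimes T \otimes \mathcal{I}(ku)) \subseteq H^0(X,L^k \otimes T)$; this follows from Proposition~\ref{prop:BMNimplynorm}(ii), since restriction of a norm to a subspace is again a norm. There is no genuine obstacle here: all the real work (the smooth case via Proposition~\ref{prop: smooth_weak_conv}, the cocycle identity, the envelope approximation of Lemma~\ref{lem: global_env_approx}, and the Bernstein--Markov comparison) has been done in the preceding results, and this corollary is simply the bookkeeping step that converts the $N^k_{v,K}$-asymptotics into $N^k_{v,\nu}$-asymptotics.
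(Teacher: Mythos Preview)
Your proposal is correct and follows exactly the paper's approach: the paper's proof is the one-line statement ``Our claim follows from Proposition~\ref{prop: quant_I_algebraic_BM} and Lemma~\ref{lem:BML},'' which is precisely the decomposition you wrote out. Your extra remark about $N^k_{v,\nu}$ being a genuine norm (via Proposition~\ref{prop:BMNimplynorm}(ii)) is a harmless sanity check that the paper leaves implicit.
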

\begin{proof}
Our claim follows from Proposition~\ref{prop: quant_I_algebraic_BM} and Lemma~\ref{lem:BML}.
\end{proof}

\begin{proposition}\label{prop:weakconvana}
Suppose that $u \in \PSH(X,\theta)$ with $[u] \in \mathcal A(X,\theta)$, and we assume that $\theta_u$ is a Kähler current. Let $(K,v)$ be a weighted subset of $X$. Let $\nu\in \BM(K,v)$. Then $\beta^k_{v,u,\nu} \rightharpoonup \theta_{P_{K}[u]_{\mathcal{I}}(v)}^n=\theta_{P_K[u](v)}^n$ weakly as $k \to \infty$.
\end{proposition}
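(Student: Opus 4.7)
The plan is to adapt the Berman--Boucksom strategy of \cite{BB10}: derive weak convergence of the partial Bergman measures from convergence of the Donaldson $\mathcal L$-functional, combined with a concavity argument. First, since $[u]\in \mathcal A(X,\theta)$, Lemma \ref{lem: same_sing_type} immediately gives $P_K[u]_\mathcal I(v)=P_K[u](v)$, so the right-hand sides agree and it suffices to prove $\beta^k_{v,u,\nu}\rightharpoonup \theta^n_{P_K[u](v)}$.

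For an arbitrary $f\in C^0(K)$, I would consider the two families
\[
G_k(t):=\mathcal L_{k,u}(N^k_{v+tf,\nu})\,,\qquad F(t):=\mathcal I^\theta_{[u],K}(v+tf)\,,\qquad t\in\mathbb R\,.
\]
A standard differentiation of $-\log\det A_k(t)$, where $A_k(t)$ is the Gram matrix of $N^k_{v+tf,\nu}$ in an $N^k_{v,\nu}$-orthonormal basis $\{e_i\}_i$, yields
\[
G_k'(0)=\int_K f\,\beta^k_{v,u,\nu}\,,
\]
while Proposition \ref{prop: differential_P} (applicable since $\theta_u$ is a K\"ahler current, so $\int_X\theta_u^n>0$) gives differentiability of $F$ with $F'(0)=\int_K f\,\theta^n_{P_K[u](v)}$. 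The function $G_k$ is moreover concave in $t$: Bessel's inequality $\sum_i\|P(fe_i)\|^2\leq\sum_i\|fe_i\|^2$ for the orthogonal projection $P$ from $L^2(e^{-kv}\,d\nu)$ onto $H^0(X,L^k\otimes T\otimes \mathcal I(ku))$ translates into $\operatorname{tr}(\dot A_k(0)^2)\leq \operatorname{tr}(\ddot A_k(0))$, equivalently $(\log\det A_k)''\geq 0$ at $t=0$, and then at every $t$ by translation of the base point.

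Next, I would establish the one-sided pointwise bound $\varliminf_k G_k(t)\geq F(t)$ for $t$ in a neighborhood of $0$. Since $\nu$ is a probability measure on $K$, $N^k_{v+tf,\nu}(s)\leq N^k_{v+tf,K}(s)$ for every section $s$, hence $G_k(t)\geq \mathcal L_{k,u}(N^k_{v+tf,K})$, and Proposition \ref{prop: quant_I_algebraic_BM} identifies the limit of the latter as $F(t)$. This is where I sidestep the technical worry that $\nu$ need not be Bernstein--Markov at the perturbed weight $v+tf$: the BM hypothesis $\nu\in\BM(K,v)$ enters only at $t=0$, via Corollary \ref{cor:LktoI}, where it gives the clean pointwise limit $G_k(0)\to F(0)$. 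Concavity of $G_k$ then yields the difference-quotient sandwich
\[
\frac{G_k(t_2)-G_k(0)}{t_2}\ \leq\ G_k'(0)\ \leq\ \frac{G_k(t_1)-G_k(0)}{t_1}\qquad (t_1<0<t_2)\,,
\]
and taking $\varliminf_k$ (resp.\ $\varlimsup_k$), followed by $t_2\to 0^+$ (resp.\ $t_1\to 0^-$) and invoking differentiability of $F$, forces $\lim_k G_k'(0)=F'(0)$. Hence $\int_K f\,\beta^k_{v,u,\nu}\to \int_K f\,\theta^n_{P_K[u](v)}$ for every $f\in C^0(K)$.

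To conclude weak convergence on all of $X$, I would note that $\beta^k_{v,u,\nu}$ vanishes on $X\setminus K$ by construction and $\theta^n_{P_K[u](v)}$ gives no mass to $X\setminus K$ by Corollary \ref{cor:suppthetan}; therefore testing against arbitrary $f\in C^0(X)$ reduces to $f|_K\in C^0(K)$. The main obstacle in this outline is the concavity of $G_k$, which reduces to a Bessel-type inequality for the partial Bergman projection; once this (and the clean $t=0$ limit from BM) is in place, the derivative convergence is forced by the standard convexity squeeze described above.
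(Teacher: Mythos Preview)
Your argument is correct and follows essentially the same route as the paper: both differentiate the concave family $t\mapsto \mathcal L_{k,u}(N^k_{v+tf,\nu})$, identify the derivative at $0$ with $\int_K f\,\beta^k_{v,u,\nu}$, and match it to $F'(0)=\int_K f\,\theta^n_{P_K[u](v)}$ via Proposition~\ref{prop: differential_P} using the convexity squeeze (the paper packages this last step as \cite[Lemma~7.6]{BB10}). Your version is in fact more explicit on the one delicate point the paper leaves implicit: $\nu$ is only assumed Bernstein--Markov for the weight $v$, so Corollary~\ref{cor:LktoI} only gives the exact limit $G_k(0)\to F(0)$, while for $t\neq 0$ you correctly fall back on the one-sided bound $G_k(t)\geq \mathcal L_{k,u}(N^k_{v+tf,K})\to F(t)$ coming from Proposition~\ref{prop: quant_I_algebraic_BM}; together with concavity this is exactly what the cited lemma needs. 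Your Bessel-inequality justification of concavity is equivalent to the paper's H\"older-inequality citation to \cite[Proposition~2.4]{BBWN11}.
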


The following proof is similar to that of \cite[Theorem~B]{BBWN11}. 
\begin{proof}
For $w\in C^0(X)$, let 
\[
f_k(t)=\mathcal{L}_{k,u}(N^k_{v+tw,\nu})\,,\quad g(t):=\mathcal{I}^{\theta}_{[u],K}(v+tw)\,.
\]
By Corollary~\ref{cor:LktoI} $\varliminf_{k\to\infty} f_k(t)= g(t)$. Note that $f_k$ is concave by H\"older's inequality (see \cite[Proposition~2.4]{BBWN11}), so by \cite[Lemma~7.6]{BB10},
$\lim_{k\to\infty} f_k'(0)=g'(0)$,
which is equivalent to $\beta^k_{v,u,\nu} \rightharpoonup \theta_{P_K[u](v)}^n$, by Proposition~\ref{prop: differential_P}.
\end{proof}

Next, we deal with the case of Kähler currents:
\begin{proposition} \label{prop:mainKahcurr}
Suppose that $u \in \PSH(X,\theta)$ such that $\theta_u$ is a Kähler current. 
Let $(K,v)$ be a weighted subset of $X$ and $\nu\in \BM(K,v)$. Then $\beta^k_{v,u,\nu} \rightharpoonup \theta_{P_{K}[u]_{\mathcal{I}}(v)}^n$ as $k \to \infty.$
\end{proposition}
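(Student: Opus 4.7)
The plan is to reduce to the analytic case already covered by Proposition~\ref{prop:weakconvana}, via the Demailly approximation sequence $\{u^D_j\}$ of Theorem~\ref{thm:Demailly}. Since $\theta_u$ is a K\"ahler current, for $j$ large the potentials $u^D_j$ belong to $\PSH(X,\theta)$, $\theta_{u^D_j}$ remains a K\"ahler current, and $[u^D_j] \in \mathcal A(X,\theta)$. The inclusion $\mathcal I(ku) \subseteq \mathcal I(ku^D_j)$ from Theorem~\ref{thm:Demailly}(iii) gives a containment of section spaces, so
\[
\beta^k_{v,u,\nu} \leq \beta^k_{v,u^D_j,\nu}
\]
as Borel measures on $X$, for all $k$ and all $j$ large.

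The total masses $\int_X \beta^k_{v,u,\nu}$ are uniformly bounded by Theorem~\ref{thm:vol_formula_main}, so the family $\{\beta^k_{v,u,\nu}\}_k$ is weakly relatively compact. Let $\mu$ be an arbitrary weak subsequential limit. Proposition~\ref{prop:weakconvana} applied to each $u^D_j$ gives $\beta^k_{v,u^D_j,\nu} \rightharpoonup \theta^n_{P_K[u^D_j]_\mathcal{I}(v)}$ as $k\to\infty$. Testing the displayed inequality against an arbitrary nonnegative $\varphi \in C^0(X)$ and passing to the limit along the chosen subsequence yields
\[
\int_X \varphi\, d\mu \;\leq\; \int_X \varphi \, \theta^n_{P_K[u^D_j]_\mathcal{I}(v)}
\]
for every $j$ large enough.

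The delicate step is the passage $j \to \infty$. By Lemma~\ref{lem:algebraic_PI} and Lemma~\ref{lma:dSuku}, $d_\mathcal{S}([P^\theta[u^D_j]_\mathcal{I}],[P^\theta[u]_\mathcal{I}]) \to 0$, while the second assertion of Proposition~\ref{prop: PIlimit} yields $P^\theta[u^D_j]_\mathcal{I} \searrow P^\theta[u]_\mathcal{I}$. Combining Proposition~\ref{prop: conv_of_K_env}(i) with Corollary~\ref{cor:projectivity} then gives $P_K[u^D_j]_\mathcal{I}(v) \searrow P_K[u]_\mathcal{I}(v)$. Since $[P_K[u^D_j]_\mathcal{I}(v)] = [u^D_j]$ by Lemma~\ref{lem: same_sing_type} and Lemma~\ref{lem:algebraic_PI}, we get $\int_X \theta^n_{P_K[u^D_j]_\mathcal{I}(v)} = \int_X \theta^n_{u^D_j} \searrow \int_X \theta^n_{P^\theta[u]_\mathcal{I}} = \int_X \theta^n_{P_K[u]_\mathcal{I}(v)}$ by Lemma~\ref{lma:dSuku} and \cite[Theorem~1.1]{WN19}, so \cite[Theorem~2.3]{DDNL2} yields the weak convergence $\theta^n_{P_K[u^D_j]_\mathcal{I}(v)} \rightharpoonup \theta^n_{P_K[u]_\mathcal{I}(v)}$. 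Taking $j \to \infty$ in the previous inequality gives $\mu \leq \theta^n_{P_K[u]_\mathcal{I}(v)}$.

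It remains to compare total masses. By \eqref{eq:intbeta}, Theorem~\ref{thm:vol_formula_main}, Lemma~\ref{lem: same_sing_type} and \cite[Theorem~1.1]{WN19},
\[
\int_X d\mu \;=\; \lim_{k\to\infty} \int_X \beta^k_{v,u,\nu} \;=\; \int_X \theta^n_{P^\theta[u]_\mathcal{I}} \;=\; \int_X \theta^n_{P_K[u]_\mathcal{I}(v)}\,,
\]
which, together with $\mu \leq \theta^n_{P_K[u]_\mathcal{I}(v)}$, forces $\mu = \theta^n_{P_K[u]_\mathcal{I}(v)}$. As every subsequential weak limit coincides with $\theta^n_{P_K[u]_\mathcal{I}(v)}$, the full sequence converges weakly, as desired.
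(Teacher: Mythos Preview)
Your proof is correct and follows essentially the same approach as the paper: dominate $\beta^k_{v,u,\nu}$ by $\beta^k_{v,u^D_j,\nu}$ via the Demailly approximation, apply Proposition~\ref{prop:weakconvana} to pass to the limit in $k$, then use the decreasing convergence $P_K[u^D_j]_\mathcal{I}(v) \searrow P_K[u]_\mathcal{I}(v)$ together with mass convergence and \cite[Theorem~2.3]{DDNL2} to let $j\to\infty$, and finally compare total masses via Theorem~\ref{thm:vol_formula_main}. One small remark: the identity $P_K[w]_\mathcal{I}(v)=P_K[P^\theta[w]_\mathcal{I}]_\mathcal{I}(v)$ that you need is not literally the content of Corollary~\ref{cor:projectivity}, but follows directly from the definition of the envelope together with \cite[Proposition~2.18(ii)]{DX22} (which says $[h]\preceq_\mathcal{I}[w]$ iff $[h]\preceq_\mathcal{I}[P[w]_\mathcal{I}]$); the paper simply applies Proposition~\ref{prop: conv_of_K_env} directly to $u^D_j\searrow u$ with the same implicit understanding.
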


\begin{proof} Let $\mu$ be the weak limit of a subsequence of $\beta^k_{v,u,\nu}$. We claim that 
\begin{equation}\label{eq:inproofmuleq}
\mu\leq \theta_{P_{K}[u]_{\mathcal{I}}(v)}^n\,.
\end{equation}
Observe that this claim implies the conclusion. In fact, by Theorem~\ref{thm:vol_formula_main}, we have equality of the total masses, so equality holds in \eqref{eq:inproofmuleq}. As $\mu$ is an arbitrary subsequential limit of the weak compact sequence $\{\beta^k_{v,u,\nu}\}_k$, we get that $\beta^k_{v,u,\nu}\rightharpoonup \theta_{P_{K}[u]_{\mathcal{I}}(v)}^n$ as $k\to\infty$.

We prove the claim.
Let $\{u^D_j\}_j$ be the approximation sequence of Theorem~\ref{thm:Demailly}. By Lemma~\ref{lem: same_sing_type}, Lemma~\ref{lma:dSuku}, we know that $d_\mathcal S([u^D_j],[P_{K}[u]_{\mathcal{I}}]) = d_\mathcal S([u^D_j],[P_{K}[u]_{\mathcal{I}}(v)]) \to 0\,.$
In particular, 
\begin{equation}\label{eq:inproofeqmass}
\lim_{j\to\infty} \int_X \theta_{P_{K}[u_j^D]_{\mathcal{I}}(v)}^n= \int_X  \theta_{P_{K}[u]_{\mathcal{I}}(v)}^n\,.
\end{equation}
We know that $\theta_{u^D_j}$ are Kähler currents, for high enough $j$. Since $u \leq u^D_j$, we trivially obtain $\beta^k_{v,u,\nu} \leq \beta^k_{v,u^D_j,\nu}$ for any $k \geq 1$.
As $\nu\in \BM(K,v)$,
by Proposition~\ref{prop:weakconvana}, $\mu \leq \theta_{P_{K}[u^D_j]_{\mathcal{I}}(v)}^n$, for any $j \geq 1$ fixed. 
By Proposition~\ref{prop: conv_of_K_env}, $P_{K}[u^D_j]_{\mathcal{I}}(v) \searrow P_{K}[u]_{\mathcal{I}}(v)$ as $j\to\infty$. Hence, by \eqref{eq:inproofeqmass} and \cite[Theorem~2.3]{DDNL2}, \eqref{eq:inproofmuleq} follows.
\end{proof}

Finally, the main result:

\begin{theorem} \label{thm: pBMconvergence} Suppose that $u \in \PSH(X,\theta)$. Let $(K,v)$ be a weighed subset of $X$, let $\nu \in \BM(K,v)$. Then $\beta^k_{v,u,\nu} \rightharpoonup \theta_{P_{K}[u]_{\mathcal{I}}(v)}^n$ as $k \to \infty$.
\end{theorem}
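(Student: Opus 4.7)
The plan is to deduce the general statement from Proposition~\ref{prop:mainKahcurr} by monotone approximation from below by potentials with Kähler-current curvature, then use the matching total-mass identity of Theorem~\ref{thm:vol_formula_main} to promote a one-sided inequality into an equality. First I would observe that both $\beta^k_{u,v,\nu}$ and $\theta^n_{P_K[u]_\mathcal I(v)}$ depend only on the $\mathcal I$-singularity type of $u$: the former because $\mathcal I(ku) = \mathcal I(kP[u]_\mathcal I)$ by \cite[Proposition~2.18]{DX22}, the latter because the defining conditions only involve $\preceq_\mathcal I[u]$, cf.~Corollary~\ref{cor:projectivity}. So we may replace $u$ by $P[u]_\mathcal I$ and assume $u$ is $\mathcal I$-model. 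If $\int_X \theta_u^n = 0$, Theorem~\ref{thm:vol_formula_main} gives $\int_K \beta^k_{u,v,\nu} \to 0$, and $\int_X \theta^n_{P_K[u]_\mathcal I(v)} = 0$ by Lemma~\ref{lem: same_sing_type}, so both sides vanish in the limit.

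Assume $\int_X \theta^n_u > 0$. By Proposition~\ref{prop:posvol_dom_kahler_cur}, there exists $v_0 \in \PSH(X,\theta)$ with $v_0 \leq u$ and $\theta_{v_0} \geq \delta\omega$ for some $\delta > 0$. Set $u_t := (1-t)u + t v_0$, so $u_t \leq u$ with $u_t \nearrow u$ as $t \searrow 0$, and $\theta_{u_t} \geq t\delta\omega$ is a Kähler current for each $t \in (0,1]$. Proposition~\ref{prop:mainKahcurr} applied to $u_t$ then yields $\beta^k_{u_t,v,\nu} \rightharpoonup \theta^n_{P_K[u_t]_\mathcal I(v)}$ as $k \to \infty$. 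Since $u_t \leq u$ gives $\mathcal I(ku_t) \subseteq \mathcal I(ku)$, we have $\beta^k_{u_t,v,\nu} \leq \beta^k_{u,v,\nu}$ for every $k$. The measures $\beta^k_{u,v,\nu}$ have total masses converging to $\int_X \theta^n_{P[u]_\mathcal I}$ by Theorem~\ref{thm:vol_formula_main}, so they form a weakly compact family; any subsequential weak limit $\mu$ therefore satisfies $\mu \geq \theta^n_{P_K[u_t]_\mathcal I(v)}$ for every $t \in (0,1]$.

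Letting $t_j \searrow 0$, the argument of Proposition~\ref{prop: conv_of_K_env}(ii) (whose only input is $\int_X \theta^n_{u_{t_j}} \to \int_X \theta^n_u > 0$) yields $P_K[u_{t_j}]_\mathcal I(v) \nearrow P_K[u]_\mathcal I(v)$ a.e., and \cite[Theorem~2.3]{DDNL2} then upgrades this to $\theta^n_{P_K[u_{t_j}]_\mathcal I(v)} \rightharpoonup \theta^n_{P_K[u]_\mathcal I(v)}$. Hence $\mu \geq \theta^n_{P_K[u]_\mathcal I(v)}$. On the other hand, combining Theorem~\ref{thm:vol_formula_main} with Lemma~\ref{lem: same_sing_type},
\[
\int_X \mu \leq \lim_k \int_K \beta^k_{u,v,\nu} = \int_X \theta^n_{P[u]_\mathcal I} = \int_X \theta^n_{P_K[u]_\mathcal I(v)},
\]
so equality must hold throughout, giving $\mu = \theta^n_{P_K[u]_\mathcal I(v)}$. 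Since every weak subsequential limit of $\{\beta^k_{u,v,\nu}\}$ is the same measure, the full sequence converges weakly.

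The main conceptual point, rather than a genuine obstacle, is the direction of approximation. Demailly's regularization from Theorem~\ref{thm:Demailly} only lies in $\PSH(X,\theta+\varepsilon_j\omega)$ and so cannot be used to approximate a general pseudoeffective $u$ from above within $\PSH(X,\theta)$—this is precisely why Proposition~\ref{prop:mainKahcurr} requires $\theta_u$ itself to be a Kähler current. Approximating instead from below along the segment $u_t = (1-t)u + tv_0$ stays inside $\PSH(X,\theta)$ and produces Kähler currents for free; the price is that Proposition~\ref{prop:mainKahcurr} only delivers a one-sided inequality on $\mu$, which is bridged by the sharp volume identity from Theorem~\ref{thm:vol_formula_main}.
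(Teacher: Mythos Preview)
Your proof is correct and follows essentially the same strategy as the paper's: reduce to the positive-mass $\mathcal I$-model case, approximate $u$ from below by the segment $u_t=(1-t)u+tv_0$ toward a Kähler-current potential furnished by Proposition~\ref{prop:posvol_dom_kahler_cur}, apply Proposition~\ref{prop:mainKahcurr} at each level to get $\mu\geq \theta^n_{P_K[u_t]_\mathcal I(v)}$, pass to the limit via Proposition~\ref{prop: conv_of_K_env}(ii) and \cite[Theorem~2.3]{DDNL2}, and close with the total-mass identity from Theorem~\ref{thm:vol_formula_main} and Lemma~\ref{lem: same_sing_type}. The only cosmetic differences are that the paper replaces $u$ by $P_K[u]_\mathcal I(v)$ rather than $P[u]_\mathcal I$ and phrases the approximants as a sequence $u_j$ rather than a continuous family, but the content is identical.
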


\begin{proof} By Lemma~\ref{lem: same_sing_type} and \cite[Proposition~2.18]{DX22} we have that 
\begin{flalign*}
H^0\left(X,L^k \otimes T \otimes \mathcal I(ku)\right) &= H^0\left(X,L^k \otimes T \otimes \mathcal I(kP[u]_{\mathcal{I}})\right)\\
 &= H^0\left(X,L^k \otimes T \otimes \mathcal I(kP[u]_{\mathcal{I}}(v))\right)\,. 
\end{flalign*}
This allows us to replace $u$ with $P_{K}[u]_{\mathcal{I}}(v)$. In addition, by Theorem~\ref{thm:vol_formula_main} we can also assume that $\int_X \theta_{u}^n > 0$, otherwise there is nothing to prove.

By Proposition~\ref{prop:posvol_dom_kahler_cur}, there exists $u_j\in \PSH(X,\theta)$, such that $u_j\nearrow u$ a.e. and $\theta_{u_j}$ are Kähler currents. This gives $\beta^k_{v,u_j,\nu} \leq \beta^k_{v,u,\nu}$.
Let $\mu$ be the weak limit of a subsequence of $\beta^k_{v,u,\nu}$. Then by Proposition~\ref{prop:mainKahcurr}, $\theta_{P_{K} [u_j]_{\mathcal{I}}(v)}^n\leq \mu$.
By Proposition~\ref{prop: conv_of_K_env} and \cite[Theorem~2.3]{DDNL2} we have that $\theta_{P_{K} [u_j]_{\mathcal{I}}(v)}^n \nearrow \theta_{P_{K} [u]_{\mathcal{I}}(v)}^n$. Hence,
\begin{equation}\label{eq:inproofmulower}
\theta_{P_{K} [u]_{\mathcal{I}}(v)}^n\leq \mu\,.
\end{equation}
A comparison of total masses (\eqref{eq:intbeta} and Theorem~\ref{thm:vol_formula_main}) gives that equality holds in \eqref{eq:inproofmulower}. As $\mu$ is an arbitrary subsequential limit of the weak compact sequence $\{\beta^k_{v,u,\mu}\}_k$, we obtain that $\beta^k_{v,u,\nu} \rightharpoonup \theta_{P_{K}[u]_{\mathcal{I}}(v)}^n$ as $k\to\infty$.
\end{proof}

\begingroup
\setstretch{1.1}
\setlength\bibitemsep{0pt}
\setlength\biblabelsep{0pt}
\printbibliography
\endgroup

\small{
\noindent {\sc Department of Mathematics, University of Maryland, USA}\\
{\tt tdarvas@umd.edu}\vspace{0.1in}\\

\noindent {\sc Institut de Mathématiques de Jussieu, Sorbonne Universit\'e, France}\\
{\tt mingchen@imj-prg.fr}\vspace{0.1in}
}
\end{document}